\newtheorem{thm}{Theorem}[section]
\newtheorem{prop}[thm]{Proposition}
\newtheorem{cor}[thm]{Corollary}
\newtheorem{lem}[thm]{Lemma}
\newtheorem{defn}[thm]{Definition}
\newtheorem{prob}[thm]{Problem}
\numberwithin{equation}{section}
\newcommand{\sn}{\mathfrak{S}_n}
\newcommand{\sr}{\mathfrak{S}_r}
\newcommand{\mfs}[1]{\mathfrak{S}_{#1}}
\newcommand{\zsn}{\mathbb{Z}[\sn]}
\newcommand{\zxn}{\mathbb{Z}[x_{1,1},\dotsc,x_{n,n}]}
\newcommand{\cqq}{\mathbb{C}[\qp12, \qm12]}
\newcommand{\zqq}{\mathbb{Z}[\qp12, \qm12]}
\newcommand{\csn}{\mathbb{C}[\sn]}
\newcommand{\eJ}{{\emptyset, J}}
\newcommand{\wo}{w_0}
\newcommand{\euv}{\epsilon_{u,v}}
\newcommand{\evw}{\epsilon_{v,w}}
\newcommand{\qp}[2]{q^{\frac{#1}{#2}}}
\newcommand{\qm}[2]{q^{\negthinspace\Bar\,\frac{#1}{#2}}}
\newcommand{\quv}{q_{u,v}}
\newcommand{\qvw}{q_{v,w}}
\newcommand{\qeu}{q_u}
\newcommand{\qev}{q_v}
\newcommand{\qew}{q_w}
\newcommand{\qiuv}{q_{u,v}^{-1}}
\newcommand{\qivw}{q_{v,w}^{-1}}
\newcommand{\qiev}{q_{e,v}^{-1}}
\newcommand{\qiew}{q_w^{-1}}
\newcommand{\qiey}{q_y^{-1}}
\newcommand{\qieyp}{q_{y'}^{-1}}
\newcommand{\qdiff}{\qp12 - \qm12}
\newcommand{\wT}{\widetilde T} 
\newcommand{\wTp}{\widetilde T'}
\newcommand{\ol}[1]{\overline{#1}}
\newcommand{\wR}[1]{\widetilde R_{#1}}
\newcommand{\wS}[1]{\widetilde S_{#1}}
\newcommand{\hnq}{H_n(q)}
\newcommand{\A}{\mathcal{A}}
\newcommand{\anq}{\mathcal{A}(n,q)}
\newcommand{\arnq}{\mathcal{A}_r}
\newcommand{\Ann}{\mathcal{A}_{[n],[n]}}
\newcommand{\annnq}{\mathcal{A}_{[n],[n]}}
\newcommand{\anmnq}{\mathcal{A}_{[n],M}(n;q)}
\newcommand{\AnM}{\mathcal{A}_{[n],M}}
\newcommand{\ALM}{\mathcal{A}_{L,M}}
\newcommand{\Hpej}{H'_{\smash\eJ}} 
\newcommand{\Hej}{H_\eJ}
\newcommand{\Wejp}{W^{\emptyset,J}_+}
\newcommand{\Wijp}{W^{I,J}_+}
\newcommand{\Wijm}{W^{I,J}_-}
\newcommand{\pej}[1]{p_{#1}^{\emptyset,J}}
\newcommand{\rej}[1]{r_{#1}^{\emptyset,J}}
\newcommand{\redexp}[2]{s_{i_{#1}} \ntnsp \cdots s_{i_{#2}}}
\newcommand{\imm}[1]{\mathrm{Imm}_{#1}}
\newcommand{\sumsb}[1]{\sum_{\substack{#1}}}  
\newcommand{\inv}{\textsc{inv}}
\newcommand{\rinv}{\textsc{rinv}}
\newcommand{\defeq}{\underset{\mathrm{def}}=} 
\newcommand{\dfct}{\textsc{d}}
\newcommand{\dnc}{\textsc{dnc}}
\newcommand{\dc}{\textsc{dc}}
\newcommand{\spn}{\mathrm{span}}
\newcommand{\wgt}{\mathrm{wgt}}
\newcommand{\type}{\mathrm{type}}
\newcommand{\incross}{\textsc{invnc}}
\newcommand{\cdncross}{\textsc{cdnc}}
\newcommand{\cross}{\textsc{c}}
\newcommand{\avoidsp}{avoids the patterns $3412$ and $4231${}}
\newcommand{\ul}[1]{\underline{#1}}  
\newcommand{\ssec}[1]{\subsection{#1}{$\negthinspace$}}
\newcommand{\tr}{{\negthickspace \top \negthickspace}}
\newcommand{\ntnsp}{\negthinspace}
\newcommand{\ntksp}{\negthickspace}
\newcommand{\nTksp}{\negthickspace\negthickspace}
\newcommand{\oqglnc}{{\mathcal O}_q(GL_n (\mathbb C))}
\newcommand{\oqslnc}{{\mathcal O}_q(SL_n (\mathbb C))}
\newcommand{\qdet}{\mathrm{det}_q}
\newcommand{\permmon}[2]{#1_{1,#2_1} \ntnsp\cdots {#1}_{n,#2_n}}
\newcommand{\sprod}[2]{s_{#1_1} \ntnsp\cdots s_{#1_#2}}
\newcommand{\ssm}{\smallsetminus}
\newcommand{\bs}{\backslash}  
\newcommand{\wleq}{\leq_W}
\newcommand{\wless}{<_W}  
\newcommand{\slambda}{\mathfrak{S}_\lambda}
\newcommand{\slambdamin}{\mathfrak{S}_\lambda^{-}}
\def\hhhpsp{\def\baselinestretch{0.13}\large\normalsize}
\def\ssp{\def\baselinestretch{1.0}\large\normalsize}
\begin{document}
\author{Ryan Kaliszewski}
\author{Justin Lambright}
\author{Mark Skandera}
\title[Bases and induced sign characters]
      {Bases of the quantum matrix bialgebra and induced sign characters
       of the Hecke algebra}

\bibliographystyle{../dart}

\date{\today}

\begin{abstract}
  We combinatorially describe entries of the transition matrices which
  relate monomial bases of the zero-weight space of the quantum
  matrix bialgebra.  This description leads to a combinatorial rule for
  evaluating induced sign characters of the type $A$ Hecke algebra $\hnq$
  at all elements of the form $(1 + T_{s_{i_1}}) \cdots (1 + T_{s_{i_m}})$,
  including the Kazhdan-Lusztig basis elements indexed by
  $321$-hexagon-avoiding permutations.
  This result is the first subtraction-free rule for evaluating all
  elements of a basis of the $\hnq$-trace space
  at all elements of a basis of $\hnq$.
\end{abstract}

\maketitle

\section{Introduction}\label{s:intro}

Define the {\em symmetric group algebra} $\zsn$
and 
the
{\em (type $A$ Iwahori-) Hecke algebra} $\hnq$
to be the algebras with
multiplicative identity elements $e$ and $T_e$,
respectively,
generated over $\mathbb Z$ and $\zqq$
by elements
$s_1,\dotsc, s_{n-1}$ and 
$T_{s_1},\dotsc, T_{s_{n-1}}$, 
subject to the relations
\begin{equation}\label{eq:hnqdef}
\begin{alignedat}{3}
s_i^2 &= e &\qquad
T_{s_i}^2 &= (q-1) T_{s_i} + qT_e &\qquad
&\text{for $i = 1, \dotsc, n-1$},\\
s_is_js_i &= s_js_is_j &\qquad
T_{s_i}T_{s_j}T_{s_i} &= T_{s_j}T_{s_i}T_{s_j} &\qquad
&\text{for $|i - j| = 1$},\\
s_is_j &= s_js_i &\qquad
T_{s_i}T_{s_j} &= T_{s_j}T_{s_i} &\qquad
&\text{for $|i - j| \geq 2$}.
\end{alignedat}
\end{equation}
Analogous to the natural basis $\{ w \,|\, w \in \sn \}$ of $\zsn$
is the natural basis $\{ T_w \,|\, w \in \sn \}$ of $\hnq$,
where we define
$T_w = T_{s_{i_1}} \ntksp \cdots T_{s_{i_\ell}}$
whenever $\sprod i\ell$
is a reduced (short as possible)
expression for $w$ in $\sn$.  We call $\ell$ the {\em length}
of $w$ and write $\ell = \ell(w)$.
We define the {\em one-line notation} $w_1 \cdots w_n$ of $w \in \sn$ by
letting any expression for $w$ act on the word $1 \cdots n$,
where each generator $s_j$ acts on an $n$-letter word by
swapping the letters in positions $j$ and $j+1$,
\begin{equation*}
  s_j \circ v_1 \cdots v_n = v_1 \cdots v_{j-1} v_{j+1} v_j v_{j+2} \cdots v_n.
\end{equation*}
It is known that $\ell(w)$ is equal
to $\inv(w)$, the number of inversions 
in the one-line notation $w_1 \cdots w_n$ of $w$.
The specialization of $\hnq$ at $\qp12 = 1$ is isomorphic to $\zsn$.
Two partial orders arising in the study of $\sn$ are the {\em Bruhat order}
$\leq$ and the {\em weak order} $\wleq$ defined by
\begin{equation}\label{eq:bruweakdef}
  \begin{alignedat}2
    u &\leq v &\quad &\text{if every reduced expression for $v$ contains a reduced expression for $u$,}\\
    u &\wleq v &\quad &\text{if some reduced expression for $v$ ends with a reduced expression for $u$.}
  \end{alignedat}
\end{equation}

In addition to the natural bases of $\zsn$ and $\hnq$, we have the
(signless) {\em Kazhdan-Lusztig bases}~\cite{KLRepCH}
$\{ C'_w(1) \,|\, w \in \sn \}$,
$\{ C'_w(q) \,|\, w \in \sn \}$,
defined in terms of certain {\em Kazhdan-Lusztig polynomials}
$\{ P_{v,w}(q) \,|\, v,w \in \sn \}$ in $\mathbb N[q]$ by
\begin{equation}\label{eq:KLbasis}
C'_w(1) = \sum_{v \leq w} P_{v,w}(1) v, 
\qquad
C'_w(q) = \qiew \sum_{v \leq w} P_{v,w}(q) T_v,
\end{equation}
where we define
$\qew = \qp{\ell(w)}2$.
A modification $\{ \qew C'_w(q) \,|\, w \in \sn \}$
of the second basis belongs to $\spn_{\mathbb N[q]} \{ T_w \,|\, w \in \sn \}$.

Representations of $\zsn$ and $\hnq$ are often studied in terms of 
$\mathbb Z$- and $\zqq$-linear functionals called {\em characters}.
The $\mathbb Z$-span of the $\sn$-characters is called the space of
{\em $\sn$-class functions}, and has
dimension equal to the number of integer partitions of $n$.  
Two well-studied bases 
are the irreducible characters $\{ \chi^\lambda \,|\, \lambda \vdash n \}$,
and induced sign characters $\{ \epsilon^\lambda \,|\, \lambda \vdash n \}$,
where $\lambda \vdash n$ denotes that $\lambda$ is a partition of $n$.
The $\zqq$-span of the $\hnq$-characters, called the space of
{\em $\hnq$-traces}, has the same dimension and analogous character bases
$\{ \chi_q^\lambda \,|\, \lambda \vdash n \}$,
$\{ \epsilon_q^\lambda \,|\, \lambda \vdash n \}$, 
specializing at $\smash{\qp12} = 1$ to the $\sn$-character bases.
In each space, the {\em Kostka numbers}
$\{ K_{\lambda,\mu} \,|\, \lambda, \mu \vdash n \} \subseteq \mathbb N$
describe the expansion of
induced sign characters in the irreducible character basis,
just as they describe the expansion of elementary symmetric functions
in the Schur basis of the space of homogeneous degree $n$ symmetric functions,
\begin{equation*}
  \epsilon^\lambda = \sum_{\mu \vdash n}K_{\mu^\tr,\lambda} \chi^\mu, \qquad
  \epsilon_q^\lambda = \sum_{\mu \vdash n}K_{\mu^\tr,\lambda} \chi_q^\mu, \qquad
  e_\lambda = \sum_{\mu \vdash n}K_{\mu^\tr,\lambda} s_\mu.
\end{equation*}  
(See, e.g., \cite{StanEC2}.)
Here $\mu^\tr$ denotes the transpose or {\em conjugate}
of the partition $\mu$.


The above characters $\theta \in \{ \chi^\lambda, \epsilon^\lambda \}$
of $\sn$
satisfy $\theta^\lambda(z) \in \mathbb Z$ for 
all $z \in \zsn$ and $\lambda \vdash n$.
An ideal combinatorial formula for such evaluations
would define sets $R$, $S$
so that we have
$\theta(z) = (-1)^{|S|}|R|$,
or simply $\theta(z) = |R|$ if $\theta(z) \in \mathbb N$.
For $z$ in the natural or Kazhdan-Lusztig basis of $\zsn$ we have
the following results and open problems.

\vspace{2mm}
\hhhpsp
\begin{center}
\newcolumntype{R}{>{$}c<{$}}
\begin{tabularx}{148.35mm}{|R|R|R|R|R|}%
\hline
& & & &\\
& & & &\\
\theta
& \begin{matrix}
    \mbox{Do we have $\phantom{w_{T_i}^{\hat T}}\nTksp\nTksp\nTksp$} \\ 
    \mbox{$\theta(w)^{\phantom{\hat T}}\nTksp \in \mathbb N$} \\
    \mbox{for all $w \in \sn^{\phantom{\hat T}}\nTksp$ \,?}
  \end{matrix} 
& \begin{matrix}
    \mbox{Can we interpret $\phantom{w^{\hat T}}\nTksp\nTksp\nTksp$} \\ 
    \mbox{$\theta(w)^{\phantom{\hat T}}\nTksp$ as $(-1)^{|S|}|R|$} \\
    \mbox{for all $w \in \sn^{\phantom{\hat T}}\nTksp$ \,?}
  \end{matrix}
& \begin{matrix}
    \mbox{Do we have $\phantom{w_{T_i}^{\hat T}}\nTksp\nTksp\nTksp$} \\ 
    \mbox{$\theta(C'_w(1))^{\phantom{\hat T}}\nTksp \in \mathbb N$} \\
    \mbox{for all $w \in \sn^{\phantom{\hat T}}\nTksp$ \,?}
  \end{matrix}
& \begin{matrix}
    \mbox{Can we interpret $\phantom{w^{\hat T}}\nTksp\nTksp\nTksp$} \\ 
    \mbox{$\theta(C'_w(1))$ as $|R|^{\phantom{\hat T}}\nTksp$} \\
    \mbox{for all $w \in \sn^{\phantom{\hat T}}\nTksp$ \,?}
  \end{matrix} \\
& & & &\\
& & & &\\
\hline 
& & & &\\
& & & &\\
\epsilon^\lambda & \mbox{no} & \mbox{yes} & \mbox{yes} & \mbox{open} \\
& & & &\\
& & & &\\
\chi^\lambda & \mbox{no} & \mbox{open} & \mbox{yes} & \mbox{open}\\
& & & &\\
& & & &\\
\hline
\end{tabularx}
\end{center}
\vspace{2mm}
\ssp

Of the four combinatorial interpretations asked for in the above table,
the only one which is known may be described as follows.
Let $\lambda = (\lambda_1, \dotsc, \lambda_r)$.  Then we have
\begin{equation*}
  \epsilon^\lambda(w) = (-1)^{|S(w)|}|R(w,\lambda)|,
\end{equation*}
where $S(w)$ is the set of inversions in the one-line notation of $w$
and $R(w,\lambda)$ is the set of labelings of the cycles of $w$ by
$1, \dotsc, r$ such that exactly $\lambda_i$ letters
are contained in the cycles labeled $i$
(\emph{cf.} formula for $M(f,p)_{\lambda,\mu}$ in \cite[p.\,9]{RemTrans}).
For example, consider the partition
$\lambda = (5,4) \vdash 9$ and permutation $w = 234167589 \in \mfs 9$
with $\inv(w) = 5$.
Writing $w$ in cycle notation as $(1,2,3,4)(5,6,7)(8)(9)$,
we may label the cycles in three ways so that $\lambda_1 = 5$ letters
belong to cycles labeled $1$ and $\lambda_2 = 4$ letters belong to
cycles labeled $2$:
\begin{equation*}
  \begin{matrix}
    \ul{(1,2,3,4)} & \ul{(5,6,7)} & \ul{(8)} & \ul{(9)} \\
    1              & 2            & 1        & 2,   \\
    1              & 2            & 2        & 1,   \\
    2              & 1            & 1        & 1.
  \end{matrix}
\end{equation*}
Thus we have
  $\epsilon^{\lambda}(w) = (-1)^5 3 = -3$.

The number $\chi^\lambda(w)$ may be computed by the well-known
Murnaghan-Nakayama algorithm 
but has no conjectured expression 
of the type stated above.
(See, e.g., \cite{StanEC2}.)  
Interpretations of
$\epsilon^\lambda(C'_w(1))$ and
$\chi^\lambda(C'_w(1))$
are not known for general $w \in \sn$,
but nonnegativity follows from work of 
Haiman~\cite{HaimanHecke} and Stembridge~\cite{StemImm}.
In the special case that $w$ \avoidsp,
interpretations of these numbers
are given in \cite[Thm.\,4.7]{CHSSkanEKL}.
We say that $w \in \sn$ {\em avoids the pattern}
$p_1 \cdots p_k \in \mfs k$ if no subsequence $(w_{i_1}, \dotsc, w_{i_k})$
of $w_1 \cdots w_n$ consists of letters appearing in the same
relative order as $p_1 \cdots p_k$.




The characters $\theta_q \in \{ \chi_q^\lambda, \epsilon_q^\lambda \}$
of $\hnq$
satisfy $\theta_q(z) \in \mathbb Z[q]$ for 
all $z \in \hnq$ and $\lambda \vdash n$.
An ideal combinatorial formula for such evaluations
would define
sequences $(S_k)_{k \geq 0}$, $(R_k)_{k \geq 0}$ of sets
so that we have
$\theta_q(z) = 
\sum_k (-1)^{|S_k|}|R_k| q^k$,
or simply $\theta_q(z) = 
\sum_k |R_k| q^k$ if $\theta_q(z) \in \mathbb N[q]$.
For $z$ in the natural basis
or modified Kazhdan-Lusztig basis of $\hnq$ we have
the following results and open problems.

\vspace{2mm}
\hhhpsp
\begin{center}
\newcolumntype{R}{>{$}c<{$}}
\begin{tabularx}{151.8mm}{|R|R|R|R|R|}%
\hline
& & & &\\
& & & &\\
\theta_q \ntnsp
& \begin{matrix}
    \mbox{Do we have $\phantom{w_{T_i}^{\hat T}}\nTksp\nTksp\nTksp$} \\ 
    \mbox{$\theta_q(T_w)^{\phantom{\hat T}}\nTksp \in \mathbb N[q]$}\\
    \mbox{for all $w \in \sn^{\phantom{\hat T}}\nTksp$ \,?}
  \end{matrix}
& \begin{matrix}
    \mbox{Can we interpret $\phantom{w^{\hat T}}\nTksp\nTksp\nTksp$} \\
    \mbox{$\theta_q(T_w)^{\phantom{\hat T}}\nTksp$ as}\\
    \mbox{$\sum_k^{\phantom{\hat T}} (-1)^{|S_k|}|R_k|q^k$}\\
    \mbox{for all $w \in \sn^{\phantom{\hat T}}\nTksp$ \,?}
  \end{matrix}\ntksp
& \begin{matrix}
    \mbox{Do we have $\phantom{w_{T_i}^{\hat T}}\nTksp\nTksp\nTksp$} \\
    \mbox{$\theta_q(\qew C'_w(q))^{\phantom{\hat T}}\nTksp \in \mathbb N[q]$}\\
    \mbox{for all $w \in \sn^{\phantom{\hat T}}\nTksp$ \,?}
  \end{matrix}
& \begin{matrix}
    \mbox{Can we interpret $\phantom{w^{\hat T}}\nTksp\nTksp\nTksp$} \\
    \mbox{$\theta_q(\qew C'_w(q))^{\phantom{\hat T}}\nTksp$ as}\\
    \mbox{$\sum_k^{\phantom{\hat T}}\ntksp|R_k|q^k$}\\
    \mbox{for all $w \in \sn^{\phantom{\hat T}}\nTksp$ \,?}
  \end{matrix} \\
& & & &\\
& & & &\\
\hline 
& & & &\\
& & & &\\
\epsilon_q^\lambda & \mbox{no} & \mbox{open} & \mbox{yes} & \mbox{open}\\
& & & &\\
& & & &\\
\chi_q^\lambda \ntksp & \mbox{no} & \mbox{open} & \mbox{yes} & \mbox{open}\\
& & & &\\
& & & &\\
\hline
\end{tabularx}
\end{center}
\vspace{2mm}
\ssp

The polynomial $\chi_q^\lambda(T_w)$, 
and therefore $\epsilon_q^\lambda(T_w)$,
may be computed via a $q$-extension of the Murnaghan-Nakayama algorithm.
(See, e.g.,
\cite{RamFrob}.)
However,
neither of these has a conjectured expression of the type asked for above.
Interpretations of
$\epsilon_q^\lambda(\qew C'_w(q))$ and
$\chi_q^\lambda(\qew C'_w(q))$ 
are not known for general $w \in \sn$,
but results concerning containment in $\mathbb N[q]$ 
follow from work of Haiman~\cite{HaimanHecke}.
In the special case that $w$ \avoidsp,
formulas for these polynomials
are given in
\cite[Thms.\ 6.4, 8.1]{CHSSkanEKL}.

To obtain ideal combinatorial interpretations analogous to those asked for
above, we will consider the infinite spanning set of $\hnq$ which consists
of all elements of the form 
\begin{equation}\label{eq:wdelt}
(1 + T_{s_{i_1}}) \cdots (1 + T_{s_{i_m}}) = \qp m2 C'_{s_{i_1}}(q) \cdots C'_{s_{i_m}}(q), 
\end{equation}
where
$\sprod im$ varies over
all products of generators of $\sn$.
It is easy to see that if we arbitrarily choose
one reduced expression
for each element of $\sn$, then the $n!$ corresponding products
(\ref{eq:wdelt}) form a basis for $\hnq$.
Different collections of reduced expressions can yield different bases.
For $z$ belonging to the above spanning set,
we have the following results and open problems.

\vspace{2mm}
\hhhpsp
\begin{center}
\newcolumntype{R}{>{$}c<{$}}
\begin{tabularx}{132.5mm}{|R|R|R|}%
\hline
& &\\
& &\\
\theta_q \ntnsp
& \begin{matrix}
    \mbox{Do we have $\phantom{w_{T_i}^{\hat T}}\nTksp\nTksp\nTksp$} \\ 
    \mbox{$\theta_q((1 + T_{s_{i_1}}) \cdots (1 + T_{s_{i_m}}))^{\phantom{\hat T}}\nTksp \in \mathbb N[q]$}\\
    \mbox{for all $\sprod im^{\phantom{\hat T}}\nTksp$ \,?}
  \end{matrix}
& \begin{matrix}
    \mbox{Can we interpret $\phantom{w^{\hat T}}\nTksp\nTksp\nTksp$} \\
    \mbox{$\theta_q((1+T_{s_{i_1}}) \cdots (1 + T_{s_{i_m}}))^{\phantom{\hat T}}\nTksp$ as}\\
    \mbox{$\sum_k^{\phantom{\hat T}} |R_k|q^k$ for all $\sprod im^{\phantom{\hat T}}\nTksp$ \,?}
  \end{matrix} \\
& &\\
& &\\
\hline 
& &\\
& &\\
\epsilon_q^\lambda & \mbox{yes} & \mbox{Stated in Section 5}\\
& &\\
& &\\
\chi_q^\lambda \ntksp & \mbox{yes} & \mbox{open}\\
& &\\
& &\\
\hline
\end{tabularx}
\end{center}
\vspace{2mm}
\ssp

Results concerning containment in $\mathbb N[q]$ 
follow from work of Haiman, since
every product of the form $\qeu C'_u(q) \qev C'_v(q)$ belongs
to $\spn_{\mathbb N[q]} \{ \qew C'_w(q) \,|\, w \in \sn \}$.
(See \cite[Appendix]{HaimanHecke}.)
Interpretation of the polynomials
$\epsilon_q^\lambda((1 + T_{s_{i_1}}) \cdots ( 1 + T_{s_{i_m}}))$
is new, and is the first result of its kind to include evaluation
of all elements of a basis of the $\hnq$-trace space
at all elements of a basis of $\hnq$.
Its justification depends upon the transition matrices which
relate natural bases of Drinfeld's quantum matrix bialgebra,
and an identity in this bialgebra which was stated by
Konvalinka and the third author~\cite[Thm.\,5.4]{KSkanQGJ}.

In Section~\ref{s:qmb}
we introduce the quantum matrix bialgebra $\A$
and prove combinatorial formulas for the entries of transition matrices
that relate monomial bases of the zero-weight space of $\A$.
In Section~\ref{s:planar}
we define a function $\sigma: \A \rightarrow \zqq$
which allows us to compute
  $\theta_q((1 + T_{s_{i_1}}) \cdots (1 + T_{s_{i_m}}))$
for any linear function $\theta_q: \hnq \rightarrow \zqq$
in terms of a generating function in $\A$ for $\theta_q$
and a wiring diagram for the product $\sprod im$.
In Sections~\ref{s:tabx}--\ref{s:indsgneval} we use the map $\sigma$
to combinatorially evaluate induced sign characters of $\hnq$
at all elements of the spanning set (\ref{eq:wdelt}).
We finish with some open problems in Section~\ref{s:probs}.


\section{Bases of the zero-weight space of the quantum matrix bialgebra}\label{s:qmb}

The study of quantum groups in the 1980s led to the study of
algebras of functions on these and to the related
\emph{quantum matrix bialgebra} $\A = \anq$.
$\A$
is the associative algebra with unit $1$ generated over $\zqq$
by $n^2$ variables $x=(x_{1,1},\dots,x_{n,n})$,
subject to the relations
\begin{equation}\label{eq:anqdef}
\begin{alignedat}{2}
x_{i,\ell}x_{i,k} &= \qp12x_{i,k} x_{i,\ell}, &\qquad 
x_{j,k} x_{i,\ell} &= x_{i,\ell}x_{j,k},\\
x_{j,k}x_{i,k} &= \qp12x_{i,k} x_{j,k}, &\qquad 
x_{j,\ell} x_{i,k} &= x_{i,k} x_{j,\ell} + (\qdiff) x_{i,\ell}x_{j,k},
\end{alignedat}
\end{equation}
for all indices $ 1 \leq i < j \leq n $ and 
$ 1 \leq k < \ell \leq n $.
The counit and coproduct maps
\begin{equation*}
\varepsilon(x_{i,j}) = \delta_{i,j}, \qquad 
\Delta(x_{i,j}) = \sum_{k=1}^n x_{i,k} \otimes x_{k,j}
\end{equation*}
give $\A$ a bialgebra structure.  
While $\A$ is not a Hopf algebra, 
two Hopf algebras 
closely related to it
are the quantum coordinate rings of $SL_n(\mathbb C)$ and $GL_n(\mathbb C)$,
\begin{equation*}
\oqslnc \cong \mathbb C \otimes \A/(\qdet(x)-1),
\qquad
\oqglnc \cong \mathbb C \otimes \A[t]/(\qdet(x)t-1),
\end{equation*}
where
\begin{equation}\label{eq:qdetdef}
\qdet(x) \defeq \sum_{v \in \sn} (-\qm 12)^{\ell(v)} \permmon xv
= \sum_{v \in \sn} (-\qm 12)^{\ell(v)} x_{v_1,1} \cdots x_{v_n,n}
\end{equation}
is the ($n \times n$) {\em quantum determinant} of the matrix $x = (x_{i,j})$.  
(The second equality holds in $\A$ but not in the noncommutative ring
$\zqq\langle x_{1,1}, \dotsc, x_{n,n} \rangle$.)
The antipode maps of these Hopf algebras are 
\begin{equation*}
\mathcal S (x_{i,j}) = (-\qp12)^{j-i} \qdet(x_{[n]\ssm\{j\}, [n]\ssm \{i\}}),
\qquad
\mathcal S (x_{i,j}) = 
\frac{(-\qp12)^{j-i}\qdet(x_{[n]\ssm\{j\}, [n]\ssm \{i\}})}{\qdet(x)},
\end{equation*}
respectively, where
\begin{equation}\label{eq:submatrixdef}
    [n] \defeq \{ 1, \dotsc, n \}, \qquad
    x_{L,M} \defeq (x_{\ell,m})_{\ell \in L, m \in M},
\end{equation}
and 
$\qdet(x_{L,M})$ is defined analogously to (\ref{eq:qdetdef}),
assuming $|L| = |M|$.
Specializing $\A$ at $\qp12 = 1$, we obtain 
the commutative 
ring $\mathbb{Z}[x_{1,1}, \dotsc, x_{n,n}]$.

$\A$ has a natural $\zqq$-basis 
$\{ x_{1,1}^{a_{1,1}} \cdots x_{n,n}^{a_{n,n}} \,|\, 
a_{1,1}, \dotsc, a_{n,n} \in \mathbb{N} \}$
of monomials in which variables 
appear in lexicographic order, and 
the relations (\ref{eq:anqdef})
provide an algorithm for expressing any other
monomial in terms of this basis.
(See, e.g., \cite[Lem.\,2.1]{ZhangHowe}.)
The submodule $\annnq$
spanned by the monomials
\begin{equation}
\{ x^{u,v} \defeq x_{u_1,v_1} \cdots x_{u_n,v_n} \mid u,v \in \sn \}
\end{equation}
is called the {\em zero-weight space} of $\A$ and
has the natural basis $\{ x^{e,w} \,|\, w \in \sn \}$.
The relations (\ref{eq:anqdef}) imply 
that the monomials $\{ x^{u,v} \,|\, u,v \in \sn \}$
satisfy
\begin{equation}\label{eq:switchsides}
x^{u,v} = \begin{cases}
x^{s_iu,s_iv} 
&\text{if $s_iu < u$ and $s_iv > v$},\\ 
x^{s_iu,s_iv} + (\qdiff) x^{s_iu,v} &\text{if $s_iu<u$ and $s_iv<v$}.
\end{cases}
\end{equation}
It follows that for fixed $t, u, v \in \sn$ satisfying $t \wleq u$, 
each monomial $x^{u,v}$ belongs to $\sum_w \mathbb N[\qdiff] x^{t,w}$.
In particular, since nonnegative powers of $\qdiff$ are linearly independent,
there are uniquely defined polynomials
$\{r_{u,v,t,w}(q_1)\mid w\in\sn\}$ in $\mathbb{N}[q_1]$
which satisfy
\begin{equation}\label{eq:rdefn}
x^{u,v} = \sum_{w \in \sn} r_{u,v,t,w}(\qp12 - \qm12)x^{t,w}.
\end{equation}
Some of these polynomials are identically $1$ or $0$.
\begin{prop}\label{p:ruvtw}
For fixed $t, u, v \in \sn$ with $t \wleq u$, the polynomials 
$\{r_{u,v,t,w}(q_1)\mid w\in\sn\}$ in $\mathbb{N}[q_1]$
satisfy
$r_{u,v,t,w}(q_1) = 0$ unless $w \geq tu^{-1}v$,
and $r_{u,v,t,tu^{-1}v}(q_1) = 1$, i.e.,
\begin{equation*}
  x^{u,w}
= x^{t, tu^{-1}w} + 
\ntksp\ntksp \sum_{v > tu^{-1}w} \ntksp\ntksp 
r_{u,w,t,v}(\qdiff) x^{t,v}.
\end{equation*}
\end{prop}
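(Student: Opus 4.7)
The plan is to induct on $k = \ell(u) - \ell(t) \geq 0$. The base case $k=0$ forces $u = t$, whence $x^{u,v} = x^{t,v}$ and $tu^{-1}v = v$, giving the required expansion trivially. For $k \geq 1$ the hypothesis $t \wleq u$ supplies a reduced expression $u = s_{j_1} \cdots s_{j_k} t$, and I take $i = j_1$ and $u' = s_i u = s_{j_2} \cdots s_{j_k} t$; then $s_i u < u$, $t \wleq u'$, and $\ell(u') - \ell(t) = k - 1$, so the inductive hypothesis applies to any monomial $x^{u', w}$.

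Relation~(\ref{eq:switchsides}) gives two cases. If $s_i v > v$, then $x^{u,v} = x^{u', s_i v}$, and since $(u')^{-1} s_i = u^{-1}$, induction directly yields $x^{u,v} = x^{t, tu^{-1}v} + \sum_{w > tu^{-1}v}(\cdots)\, x^{t,w}$. If $s_i v < v$, then $x^{u,v} = x^{u', s_i v} + (\qdiff)\, x^{u', v}$; induction on the first summand again gives a leading $x^{t, tu^{-1}v}$, while induction on the second gives a leading $x^{t, t(u')^{-1}v} = x^{t, tu^{-1}s_i v}$ scaled by $\qdiff$. To close this case I must show $tu^{-1}s_i v > tu^{-1}v$ in Bruhat order, so that both the explicit $\qdiff$-term and all higher terms from the second expansion lie strictly above $tu^{-1}v$.

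The crux is therefore this Bruhat comparison. The point is that $tu^{-1} = (ut^{-1})^{-1} = (s_{j_1} \cdots s_{j_k})^{-1} = s_{j_k} \cdots s_{j_1}$ is a reduced expression ending in $s_i$, so $tu^{-1}s_i < tu^{-1}$, equivalently $(tu^{-1})(i) > (tu^{-1})(i+1)$. Setting $a = u^{-1}(i+1)$ and $b = u^{-1}(i)$, one has $a < b$ because $s_i u < u$, and combined with the inequality on $tu^{-1}$ this forces $t(a) < t(b)$. Letting $c = v^{-1}(i+1) < d = v^{-1}(i)$ (from $s_i v < v$), one checks directly that $tu^{-1}v$ has entries $t(a), t(b)$ in ascending order at positions $(c,d)$, whereas $tu^{-1}s_i v$ has those two entries swapped into descending order. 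Thus $tu^{-1}s_i v$ is obtained from $tu^{-1}v$ by right multiplication by the transposition exchanging positions $c$ and $d$, and this transposition strictly increases length; by the standard characterization of Bruhat order via length-increasing transpositions, $tu^{-1}s_i v > tu^{-1}v$.

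The obstacle to a naive induction is that left multiplication by $t$ does not in general preserve Bruhat order; the careful choice $i = j_1$ is exactly what ensures $tu^{-1}$ has a descent at position $i$, which is precisely what is needed to convert the hypothesis $s_i v < v$ into the required inequality. Coefficient tracking is then straightforward: the term $x^{t, tu^{-1}v}$ arises only from the first summand in each case, with coefficient $1$ by induction, so $r_{u,v,t,tu^{-1}v}(q_1) = 1$ identically.
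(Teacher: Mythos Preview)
Your proof is correct and follows essentially the same inductive scheme as the paper's: both induct on $\ell(u)-\ell(t)$, choose a left descent $s$ of $ut^{-1}$, apply the relation~(\ref{eq:switchsides}), and reduce to the single Bruhat inequality $tu^{-1}sv > tu^{-1}v$ when $sv<v$. The only notable difference is how that inequality is justified: the paper invokes \cite[Lem.\,2.2.10]{BBCoxeter} (a general Coxeter-group fact), whereas you verify it directly in one-line notation by exhibiting $tu^{-1}s_iv$ as $tu^{-1}v$ times a length-increasing transposition. Your argument is more self-contained but type-$A$ specific; the paper's citation is shorter and works verbatim in any Coxeter group.
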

\begin{proof}
By definition we have $r_{t,v,t,w}(q_1) = \delta_{v,w}$.
Thus the claim holds when $u = t$.
Now fix $t$ and assume that the claim holds for $\ell(u) \leq \ell(t)+ k-1$,
consider $u$ of length $\ell(t) + k \geq \ell(t) + 1$,
and let $s$ be a left descent of $ut^{-1}$ and therefore of $u$.
It follows that we have $t \wleq su$.

By (\ref{eq:switchsides}) and the linear independence of powers of $\qdiff$,
we have 
\begin{equation}\label{eq:rrecur}
r_{u,v,t,w}(q_1) = \begin{cases}
r_{su,sv,t,w}(q_1) &\text{if $sv > v$,}\\
r_{su,sv,t,w}(q_1) + (q_1) r_{su,v,t,w}(q_1) &\text{if $sv < v$}.
\end{cases}
\end{equation}
Suppose that $w \not \geq tu^{-1}v$. Since
$t(su)^{-1}sv = tu^{-1}v$ and $\ell(su) = \ell(t) + k-1$, we have
by induction that
$r_{su,sv,t,w}(q_1) = 0$
in both cases of (\ref{eq:rrecur}).
Furthermore since $tu^{-1}s < tu^{-1}$,
we have by \cite[Lem.\,2.2.10]{BBCoxeter}
that $tu^{-1}sv > tu^{-1}v$ when $sv < v$.
Thus in the second case above, the condition $w \ngeq tu^{-1}v$ also implies
that $w \ngeq tu^{-1}sv$, which by induction implies that 
$r_{su,v,t,w}(q_1) = 0$.
Therefore we have $r_{u,v,t,w}(q_1) = 0$ unless $w \geq tu^{-1}v$.

Setting $w = tu^{-1}v$ in (\ref{eq:rrecur}) we have
$r_{su,sv,t,tu^{-1}v}(q_1) = 1$
in both cases.
When $sv < v$ we also have
$r_{su,v,t,tu^{-1}v}(q_1) = 0$
since $tu^{-1}v \ngeq tu^{-1}sv$.
Therefore we have $r_{u,v,t,tu^{-1}v}(q_1) = 1$.
\end{proof}
\begin{cor}\label{c:xuvbases} 
For each fixed $u \in \sn$, the set $\{ x^{u,v} \,|\, v \in \sn \}$
is a basis for $\annnq$.
\end{cor}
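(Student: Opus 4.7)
The strategy is to show that the set $\{x^{u,v} \mid v \in \sn\}$ expands in the known natural basis $\{x^{e,w} \mid w \in \sn\}$ of $\annnq$ via a unitriangular transition matrix with respect to Bruhat order, so that the expansion is invertible and both sets have the same cardinality $n!$.

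The plan is to apply Proposition~\ref{p:ruvtw} with $t = e$, which is permissible because the identity $e$ is the minimum element of the weak order, so $e \wleq u$ for every $u \in \sn$. The proposition then yields
\begin{equation*}
  x^{u,w} = x^{e, u^{-1}w} + \ntksp \ntksp \sum_{v > u^{-1}w} \ntksp \ntksp r_{u,w,e,v}(\qdiff) x^{e,v}
\end{equation*}
for every $w \in \sn$. Since the map $w \mapsto u^{-1}w$ is a bijection on $\sn$, we may reindex and regard $\{x^{u,uw'} \mid w' \in \sn\}$ as an $n!$-element list expressing each $x^{u,uw'}$ as $x^{e,w'}$ plus a $\zqq$-linear combination of terms $x^{e,v}$ with $v > w'$ in Bruhat order.

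Fixing a linear extension of Bruhat order on $\sn$ and using it to order both bases, the resulting transition matrix is upper unitriangular, hence invertible over $\zqq$. Therefore the set $\{x^{u,v} \mid v \in \sn\}$ is $\zqq$-linearly independent and spans the same submodule as $\{x^{e,w} \mid w \in \sn\}$, namely $\annnq$. The only substantive step is the invocation of Proposition~\ref{p:ruvtw}; everything else is routine linear algebra and the observation that left multiplication by $u^{-1}$ permutes $\sn$.
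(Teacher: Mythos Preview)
Your proof is correct and follows essentially the same approach as the paper: apply Proposition~\ref{p:ruvtw} with $t=e$ to obtain the expansion $x^{u,v}=x^{e,u^{-1}v}+\sum_{w>u^{-1}v}r_{u,v,e,w}(\qdiff)x^{e,w}$, then observe that ordering the monomials according to a linear extension of the Bruhat order on the indices $u^{-1}v$ yields a unitriangular transition matrix.
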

\begin{proof}
  Setting $t = e$ in (\ref{eq:rdefn}) and applying Proposition~\ref{p:ruvtw}
  we have
  \begin{equation*}
    x^{u,v} = x^{e,u^{-1}v} + \sum_{w > u^{-1}v} r_{u,v,e,u^{-1}v}(\qdiff)x^{e,w}.
  \end{equation*}
  Now ordering the monomials $x^{u,v^{(1)}},\dotsc,x^{u,v^{(n!)}}$ so
  that $u^{-1}v^{(1)},\dotsc,u^{-1}v^{(n!)}$ is a linear extension
  of the Bruhat order, we have a unitriangular system of equations.
  \end{proof} 
By the unitriangularity of the coefficient matrix in the above proof,
we may extend the statement containing (\ref{eq:rdefn}).
\begin{cor}
  For 
for fixed $t, u \in \sn$, not necessarily related in the weak order, 
there are uniquely defined polynomials
$\{r_{u,v,t,w}(q_1)\mid v,w\in\sn\}$ in $\mathbb{Z}[q_1]$
which satisfy
\begin{equation}\label{eq:rdefn2}
x^{u,v} = \sum_{w \in \sn} r_{u,v,t,w}(\qp12 - \qm12)x^{t,w}.
\end{equation}
\end{cor}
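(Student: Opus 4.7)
The plan is to factor the desired expansion through the natural basis $\{x^{e,w} \mid w \in \sn\}$ and invoke standard facts about inverses of unitriangular matrices. By Corollary~\ref{c:xuvbases}, both $\{x^{u,v} \mid v \in \sn\}$ and $\{x^{t,w} \mid w \in \sn\}$ are bases of $\annnq$, so the existence and uniqueness of coefficients $r_{u,v,t,w} \in \zqq$ satisfying (\ref{eq:rdefn2}) is immediate. The real content of the corollary is that these coefficients lie in the subring $\mathbb{Z}[q_1]$ where $q_1 = \qp12 - \qm12$.

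First, I would apply Proposition~\ref{p:ruvtw} with the role of $t$ played by $e$ (noting $e \wleq u$ trivially) to write
\begin{equation*}
  x^{u,v} = \sum_{w \in \sn} r_{u,v,e,w}(q_1) x^{e,w}, \qquad r_{u,v,e,w} \in \mathbb{N}[q_1].
\end{equation*}
Next, I would apply the same proposition with $u$ replaced by $t$ (and $t$ replaced by $e$) to write each element of the $\{x^{t,w'}\}$ basis in terms of the natural basis:
\begin{equation*}
  x^{t,w'} = \sum_{w \in \sn} r_{t,w',e,w}(q_1) x^{e,w}, \qquad r_{t,w',e,w} \in \mathbb{N}[q_1].
\end{equation*}
Collecting these coefficients into a matrix $M$ (with rows and columns indexed by $\sn$ ordered by a linear extension of Bruhat order on $t^{-1}w'$ and $w$), Proposition~\ref{p:ruvtw} tells us that $M$ is unitriangular with entries in $\mathbb{N}[q_1] \subseteq \mathbb{Z}[q_1]$.

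The key step is then inversion: since $M$ is unitriangular over the commutative ring $\mathbb{Z}[q_1]$, its inverse $M^{-1}$ exists and is also unitriangular with entries in $\mathbb{Z}[q_1]$. Composing, we obtain
\begin{equation*}
  x^{u,v} = \sum_{w \in \sn} r_{u,v,t,w}(q_1) x^{t,w}
\end{equation*}
with $r_{u,v,t,w} \in \mathbb{Z}[q_1]$ as desired; uniqueness follows from the basis property of $\{x^{t,w} \mid w \in \sn\}$.

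The main (and essentially only) obstacle is confirming that inversion preserves membership in $\mathbb{Z}[q_1]$ rather than merely landing in $\zqq$; this is the reason one must factor through the natural basis rather than expand directly. Since the naive expansion in $\annnq$ produces coefficients a priori only in $\zqq$, the two-step factorization through $\{x^{e,w}\}$ (where both directions are controlled by Proposition~\ref{p:ruvtw}) is what enforces the stronger ring containment. Entries may now be negative because the inverse of a unitriangular matrix over $\mathbb{N}[q_1]$ generally lies only in $\mathbb{Z}[q_1]$, which explains the weakening from $\mathbb{N}[q_1]$ in Proposition~\ref{p:ruvtw} to $\mathbb{Z}[q_1]$ in the corollary.
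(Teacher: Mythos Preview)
Your proof is correct and follows essentially the same route as the paper. The paper simply states that ``by the unitriangularity of the coefficient matrix in the above proof'' (referring to the proof of Corollary~\ref{c:xuvbases}) the statement extends; you have spelled out in detail what this means: factor through the natural basis $\{x^{e,w}\}$, note that the transition matrix from $\{x^{t,w'}\}$ to $\{x^{e,w}\}$ is unitriangular over $\mathbb{Z}[q_1]$, and invert.
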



Now we
turn to the problem of
combinatorially interpreting
coefficients of the polynomials
$\{ r_{u,v,t,w}(q_1) \,|\, u,v,t,w \in \sn \}$
when $t \wleq u$.
To begin,
we consider
a seemingly unrelated generating function for certain walks in the weak order.
\begin{defn}\label{d:pdefn}
Fix permutations $t, u, v, w \in \sn$ with $t \wleq u$,
and a reduced expression $s_{i_1} \ntnsp\cdots s_{i_k}$ 
for $ut^{-1}$.
Define $C_{u,v,t,w}^b(s_{i_1} \cdots s_{i_k})$ to be the set of sequences
$\pi = (\pi^{(0)}, \dotsc, \pi^{(k)})$ satisfying
\begin{enumerate}
\item $\pi^{(0)} = v$, $\pi^{(k)} = w$,
\item $\pi^{(j)} \in \{ s_{i_j} \pi^{(j-1)}, \pi^{(j-1)} \}$ for $j = 1,\dotsc, k$,
\item $\pi^{(j)} = s_{i_j}\pi^{(j-1)}$ if $s_{i_j}\pi^{(j-1)} > \pi^{(j-1)}$ for $j = 1,\dotsc, k$,
\item $\pi^{(j)} = \pi^{(j-1)}$ for exactly $b$ values of $j$ 
for $j = 1,\dotsc, k$,
\end{enumerate}
and define 
the polynomial
\begin{equation}\label{eq:pdefn}
p_{u,v,t,w}(q_1; \redexp 1k) 
= \sum_b |C_{u,v,t,w}^b(\redexp 1k )| q_1^b \in \mathbb N[q_1].
\end{equation}
\end{defn}
\noindent
Observe that we have $p_{t,v,t,w}(q_1;\emptyset) = \delta_{v,w}$. 
We also have the following recursive formula.

\begin{prop}\label{p:simplerecur}
Fix $t,u,v,w \in \sn$ with $t <_W u$,
and fix a reduced expression 
$s_{i_1} \cdots s_{i_k}$ for $ut^{-1}$.
Then we have
\begin{equation}\label{eq:simplerecur}
p_{u, v, t, w}(q_1;s_{i_1}\ntnsp \cdots s_{i_k}) = 
\begin{cases}
p_{s_{i_1}\ntnsp u, s_{i_1}\ntnsp v, t, w}(q_1; s_{i_2}\ntnsp \cdots s_{i_k}) 
&\text{if $s_{i_1}\ntnsp v > v$},\\
p_{s_{i_1}\ntnsp u, s_{i_1}\ntnsp v, t, w}(q_1; s_{i_2}\ntnsp \cdots s_{i_k}) 
+ q_1 p_{s_{i_1}\ntnsp u, v, t , w}(q_1; s_{i_2}\ntnsp \cdots s_{i_k}) 
&\text{if $s_{i_1}\ntnsp v < v$}.
\end{cases}
\end{equation}
\end{prop}
\begin{proof}
The coefficient of $q_1^b$ on the left-hand side of
(\ref{eq:simplerecur}) is $|C_{u,v,t,w}^b(s_{i_1}\ntnsp \cdots s_{i_k})|$.
Since $t <_W u$ and $s_{i_1}$ is a left descent for $ut^{-1}$,
we have that $t \wleq s_{i_1}\ntnsp u$ and that $s_{i_2} \cdots s_{i_k}$ is
a reduced expression for $s_{i_1}\ntnsp ut^{-1}$.
Thus the coefficient of $q_1^b$ on the right-hand side of 
(\ref{eq:simplerecur}) is equal to the cardinality of
\begin{equation*}
D \defeq 
\begin{cases}
C_{s_{i_1}\ntnsp u,s_{i_1}\ntnsp v,t,w}^b(s_{i_2}\ntnsp \cdots s_{i_k})
&\text{if $s_{i_1}\ntnsp v > v$},\\
C_{s_{i_1}\ntnsp u,s_{i_1}\ntnsp v,t ,w}^b(s_{i_2}\ntnsp \cdots s_{i_k}) 
\cup C_{s_{i_1}\ntnsp u,v,t,w}^{b-1}(s_{i_2}\ntnsp \cdots s_{i_k})
&\text{if $s_{i_1}\ntnsp v < v$}.
\end{cases}
\end{equation*}
We claim that the 
map
\begin{equation}\label{eq:removepi0}
\pi = (\pi^{(0)}, \pi^{(1)}, \dotsc, \pi^{(k)}) \mapsto 
(\pi^{(1)}, \dotsc, \pi^{(k)})
\end{equation}
is a bijection from $C_{u,v,t,w}^b(s_{i_1}\ntnsp \cdots s_{i_k})$ to $D$.
Clearly it is injective, since each element of 
$C_{u,v,t,w}^b(s_{i_1}\ntnsp \cdots s_{i_k})$ satisfies $\pi^{(0)} = v$.

To see that the map (\ref{eq:removepi0}) is well-defined and surjective,
assume first that $s_{i_1}\ntnsp v > v$.  
Then $\pi$ satisfies $\pi^{(1)} = s_{i_1}\ntnsp v$
and we have $b \leq k-1$.  
It follows that for $b = 0, \dotsc, k-1$, the sequence
$(\pi^{(1)}, \dotsc, \pi^{(k)})$ satisfies the conditions
\begin{enumerate}
\item[($1'$)] $\pi^{(1)} = s_{i_1} \ntnsp v$, $\pi^{(k)} = w$,
\item[($2'$)] $\pi^{(j)} \in \{ s_{i_j} \pi^{(j-1)}, \pi^{(j-1)} \}$ 
for $j = 2,\dotsc, k$,
\item[($3'$)] $\pi^{(j)} = s_{i_j}\pi^{(j-1)}$ if $s_{i_j}\pi^{(j-1)} > \pi^{(j-1)}$
for $j = 2,\dotsc, k$,
\item[($4'$)] $\pi^{(j)} = \pi^{(j-1)}$ for exactly $b$ values of $j$ 
for $j = 2,\dotsc, k$.
\end{enumerate}
Thus $(\pi^{(1)}, \dotsc, \pi^{(k)})$ belongs 
to $C_{s_{i_1}\ntnsp u,s_{i_1}\ntnsp v,t,w}^b(s_{i_2}\ntnsp \cdots s_{i_k})$.
Moreover, since prepending $v$ to any sequence in 
$C_{s_{i_1}\ntnsp u,s_{i_1}\ntnsp v,t,w}^b(s_{i_2}\ntnsp \cdots s_{i_k})$
produces a sequence belonging to 
$C_{u,v,t,w}^b(s_{i_1}\ntnsp \cdots s_{i_k})$, the map (\ref{eq:removepi0}) 
is surjective as well.


Now assume that $s_{i_1} \ntnsp v < v$.  Then $\pi$ satisfies 
$\pi^{(1)} = s_{i_1} \ntnsp v$
or $\pi^{(1)} = v$.
If $\pi^{(1)} = s_{i_1} \ntnsp v$, 
then the sequence $(\pi^{(1)}, \dotsc, \pi^{(k)})$
satisfies conditions ($1'$) -- ($4'$) above.  Otherwise it satisfies
conditions ($2'$) -- ($3'$) and
\begin{enumerate}
\item[($1''$)] $\pi^{(1)} = v$, $\pi^{(k)} = w$,
\item[($4''$)] $\pi^{(j)} = \pi^{(j-1)}$ for exactly $b-1$ values of $j$ 
for $j = 2,\dotsc, k$.
\end{enumerate}
Thus the sequence $(\pi^{(1)}, \dotsc, \pi^{(k)})$
belongs to
\begin{equation*}
C_{s_{i_1}\ntnsp u,s_{i_1}\ntnsp v,t ,w}^b(s_{i_2}\ntnsp \cdots s_{i_k}) 
\cup C_{s_{i_1}\ntnsp u,v,t,w}^{b-1}(s_{i_2}\ntnsp \cdots s_{i_k}).
\end{equation*}
Moreover, since prepending $v$ to any sequence in this union 
produces a sequence belonging to 
$C_{u,v,t,w}^b(s_{i_1}\ntnsp \cdots s_{i_k})$, we again have 
surjectivity.
\end{proof}

For fixed $t, u, w$
and reduced expression $s_{i_1}\ntnsp \cdots s_{i_k}$
as in Definition~\ref{d:pdefn},
the above
initial conditions and 
recursive formula allow one to compute
$\{p_{u,v,t,w}(q_1; s_{i_1}\ntnsp \cdots s_{i_k}) \,|\, v \in \sn \}$ 
by considering the sets
\begin{equation*}
\{ p_{s_{i_k}\ntnsp t, v, t, w}(q_1; s_{i_k}) \,|\, v \in \sn \}, \quad
\{ p_{s_{i_{k-1}}\ntnsp s_{i_k}\ntnsp t, v, t, w}(q_1; s_{i_{k-1}}\ntnsp s_{i_k}) \,|\, v \in \sn \}, \dotsc,
\end{equation*}
in order.  Somewhat surprisingly, these polynomials 
{\em do not} depend upon the choice of a reduced expression for $ut^{-1}$,
although each set $C_{u,v,t,w}^b(s_{i_1} \cdots s_{i_k})$ 
does depend upon such a choice.
Also, perhaps surprisingly, these polynomials provide a combinatorial
interpretation for entries of the transition matrices relating
pairs $(\{x^{u,v} \,|\, v \in \sn \}, \{ x^{t,w} \,|\, w \in \sn \})$
of bases of the zero-weight space of $\A$.

\begin{thm}\label{t:uexpanduprime}
For  
$t,u$ in $\sn$ with $t \wleq u$,
the polynomials $\{ r_{u,v,t,w}(q_1) \,|\, v,w \in \sn \}$
defined in (\ref{eq:rdefn}) satisfy
$r_{u,v,t,w}(q_1) = p_{u,v,t, w}(q_1, s_{i_1} \ntnsp \cdots s_{i_k})$,
where $s_{i_1} \ntnsp \cdots s_{i_k}$ is any reduced expression for $ut^{-1}$.
\end{thm}
\begin{proof}
Observe that the claimed equality holds when $t = u$,
since 
\begin{equation*}
  r_{u,v,u,w}(q_1) = p_{u,v,u,w}(q_1; \emptyset) = \delta_{v,w}
  \end{equation*}
by (\ref{eq:rdefn2}) 
and (\ref{eq:pdefn}).
Now assume the equality
to hold for $u$ and $t$ differing in length by at most $k - 1$,
and consider the case that $u$ and $t$ differ in length by $k$.
Let $s$ be a left descent of $ut^{-1}$, 
and therefore a left descent of $u$.

Expanding both sides of (\ref{eq:rdefn}) in terms of the basis
$\{ x^{t,w} \,|\, w \in \sn \}$ and using induction we obtain
\begin{equation}\label{eq:randp}
r_{u,v,t,w}(q_1) = 
\begin{cases}
p_{su,sv,t,w}(q_1; s_{j_2} \cdots s_{j_k}) &\text{if $sv > v$},\\
p_{su,sv,t,w}(q_1; s_{j_2} \cdots s_{j_k}) 
+ q_1 p_{su,v,t,w}(q_1; s_{j_2} \cdots s_{j_k}) &\text{if $sv < v$},
\end{cases}
\end{equation}
where $s_{j_2} \cdots s_{j_k}$ is an arbitrary reduced expression for $sut^{-1}$.
Since $s$ is an arbitrarily chosen left descent of $ut^{-1}$, 
we have the desired result.
\end{proof}


Using Definition~\ref{d:pdefn} and Theorem~\ref{t:uexpanduprime}
we compute some special cases of the polynomials $r_{u,v,t,w}(q_1)$.
\begin{prop}
Fix $w \in \sn$ and any generator $s$.  We have
\begin{gather}
r_{ws,w,e,s}(q_1) = r_{w,w,e,e}(q_1) = 1,\label{eq:strategicexample}\\ 
r_{w,w,e,s}(q_1) = \begin{cases}
q_1 &\text{if $ws < w$},\\
0 &\text{if $ws > w$}. 
\end{cases} \label{eq:strategicexample2}
\end{gather}
\end{prop}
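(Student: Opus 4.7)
The plan is to use Theorem~\ref{t:uexpanduprime} throughout and to let the ``walk'' interpretation of $r_{u,v,t,w}(q_1)$ do most of the work. First, both equalities in~(\ref{eq:strategicexample}) should fall out of Proposition~\ref{p:ruvtw}: for $r_{w,w,e,e}(q_1)$ we have $tu^{-1}v = e\cdot w^{-1}\cdot w = e$, and for $r_{ws,w,e,s}(q_1)$ we have $tu^{-1}v = e\cdot(ws)^{-1}\cdot w = s^{-1} = s$, so in both cases the proposition gives the value $1$ on the nose.

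For (\ref{eq:strategicexample2}), fix any reduced expression $w = s_{i_1}\cdots s_{i_k}$, so by Theorem~\ref{t:uexpanduprime} I must compute $p_{w,w,e,s}(q_1;s_{i_1}\cdots s_{i_k})$. For a sequence $\pi \in C^b_{w,w,e,s}(s_{i_1}\cdots s_{i_k})$, encode each step by $e_j\in\{0,1\}$ with $e_j=1$ exactly when we move. Then the terminal permutation is $\pi^{(k)} = v\cdot w$ where $v = s_{i_k}^{e_k}\cdots s_{i_1}^{e_1}$. The condition $\pi^{(k)} = s$ is therefore equivalent to $v = sw^{-1}$, and since $v$ is a product of $k-b$ simple reflections we obtain the necessary condition $\ell(sw^{-1})\le k-b$. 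A standard length computation gives $\ell(sw^{-1}) = k+1$ when $ws>w$ and $\ell(sw^{-1}) = k-1$ when $ws<w$.

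This settles the case $ws > w$ immediately, since $\ell(sw^{-1}) = k+1 > k-b$ for every $b\ge0$, so every $C^b_{w,w,e,s}(s_{i_1}\cdots s_{i_k})$ is empty and $r_{w,w,e,s}(q_1) = 0$. When $ws<w$ the inequality forces $b\in\{0,1\}$. The case $b=0$ gives $v = w^{-1}$ and $\pi^{(k)} = e \neq s$, so $|C^0| = 0$. For $b=1$, the Strong Exchange Condition applied to the reduced expression $w^{-1} = s_{i_k}\cdots s_{i_1}$ (together with $\ell(sw^{-1}) = \ell(w^{-1})-1$) furnishes a \emph{unique} index $j^*$ such that deleting $s_{i_{j^*}}$ yields $sw^{-1}$, pinning down a unique candidate walk. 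The only remaining verification is that staying at step $j^*$ is permitted: since all previous steps moved, the cancellations $s_{i_{j-1}}\cdots s_{i_1}\cdot s_{i_1}\cdots s_{i_k}$ telescope to give $\pi^{(j^*-1)} = s_{i_{j^*}}s_{i_{j^*+1}}\cdots s_{i_k}$, which is reduced (as a suffix of a reduced expression), so $s_{i_{j^*}}$ is a left descent of $\pi^{(j^*-1)}$ and the stay is allowed. Hence $|C^1| = 1$ and $r_{w,w,e,s}(q_1) = q_1$.

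The main technical point is the unique-exchange step together with its validity check; nothing else in the argument is more than bookkeeping, and the telescoping that makes $\pi^{(j^*-1)}$ a suffix of the reduced expression is exactly what makes the validity condition automatic.
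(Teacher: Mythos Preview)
Your proof is correct and follows the same overall strategy as the paper: Proposition~\ref{p:ruvtw} handles (\ref{eq:strategicexample}), and the walk interpretation of Theorem~\ref{t:uexpanduprime} handles (\ref{eq:strategicexample2}). The only notable difference is in the $ws<w$, $b=1$ case: the paper picks a reduced expression for $w$ ending in $s$ so that the unique valid walk stays at the final step, while you work with an arbitrary reduced expression and pin down the stay via the Exchange Condition (and your length inequality $\ell(sw^{-1})\le k-b$ dispatches the $ws>w$ case more cleanly than the paper's ``shortest walk'' remark).
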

\begin{proof}
  (\ref{eq:strategicexample}) follows from Proposition~\ref{p:ruvtw}.

  To see (\ref{eq:strategicexample2}),
  consider the coefficient
  of $q_1^b$ in $r_{w,w,e,s}(q_1)$ for $b \geq 0$, and let $\ell(w) = m$.
  By Theorem~\ref{t:uexpanduprime}
  this is equal to the number of $m$-step walks
  \begin{equation*}
    \pi = (\pi^{(0)} \ntnsp= w,\,\ \pi^{(1)},\ \dotsc,\ \pi^{(m)} \ntnsp= s)
  \end{equation*}
  in the weak order satisfying conditions stated in Definition~\ref{d:pdefn}.
  In particular, $b$ of the indices $j \in \{1, \dotsc, m-1\}$ satisfy
  $\pi^{(j-1)} = \pi^{(j)}$, while the others satisfy $\pi^{(j-1)} > \pi^{(j)}$.
  Since $\ell(w) - \ell(s) = m-1$, the coefficient must be $0$ unless $b = 1$.
  Furthermore, the coefficient must be $0$ if $ws > w$, equivalently
  $s \not \wless w$, because in this case the shortest walk in the weak order
  from $w$ to $s$ consists of $m+1$ steps with no repetition.

  Suppose therefore that we have $b = 1$ and $ws < w$,
  equivalently $s \wless w$.
  Then $w$ has a reduced expression of the form
  $s_{i_1} \cdots s_{i_{m-1}} s$,
  and the sequence
  \begin{equation}\label{eq:onewalk}
    (w,\,\ s_{i_1}\ntnsp w,\,\ s_{i_2}s_{i_1}\ntnsp w,\ \dotsc,\
    s_{i_{m-1}}\ntnsp\cdots s_{i_1}\ntnsp w = s,\,\ s)
  \end{equation}
  is one walk satisfying the conditions of Definition~\ref{d:pdefn}
  using the above reduced expression (which may be chosen arbitrarily
  by Theorem~\ref{t:uexpanduprime}).
  Assume that another such walk satisfies the conditions of the definition,
  using the same reduced expression.
  Then for some index $j < m-1$ this walk satisfies $\pi^{(j)} = \pi^{(j-1)}$
  and has the form
  \begin{equation*}
    (w,\,\ s_{i_1}\ntnsp w,\ \dotsc,\,\ s_{i_j}\ntnsp\cdots s_{i_1}\ntnsp w,\,\
    s_{i_j}\ntnsp\cdots s_{i_1}\ntnsp w,\,\
    s_{i_{j+2}}s_{i_j}\ntnsp\cdots s_{i_1}\ntnsp w,\ \dotsc,\
    ss_{i_{m-1}}\ntnsp\cdots s_{i_{j+2}} s_{i_j}\ntnsp\cdots s_{i_1}\ntnsp w = s).
  \end{equation*}
  But the equation in the last component of this walk implies that we have
  \begin{equation*}
    w = s_{i_1}\ntnsp\cdots s_{i_j} s_{i_{j+2}}\ntnsp\cdots s_{i_{m-1}},
  \end{equation*}
  contradicting the fact that $\ell(w) = m$.
  It follows that (\ref{eq:onewalk}) is the only walk satisfying
  the conditions of Definition~\ref{d:pdefn} for the chosen reduced
  expression, and that the coefficient of $q_1^1$ is $1$
  when $ws < w$.
%
\end{proof}  

\section{Wiring diagrams and the $q$-immanant evaluation theorem}\label{s:planar}

To evaluate
induced sign characters at elements $(1+T_{s_{i_1}}) \cdots (1 + T_{s_{i_m}})$
of $\hnq$,
we will associate to each such element
a graph $G$ called a {\em wiring diagram},
a related matrix $B$,
%
and a map $\sigma_B: \Ann \rightarrow \zqq$.
A generating function $\imm{\epsilon_q^\lambda}(x) \in \Ann$ for
$\{\epsilon_q^\lambda(T_w) \,|\, w \in \sn \}$ will then allow us to compute
\begin{equation}\label{eq:evalwithsigma}
  \epsilon_q^\lambda((1+T_{s_{i_1}}) \cdots (1 + T_{s_{i_m}}))
  = \sigma_B(\imm{\epsilon_q^\lambda}(x))
\end{equation}
and to combinatorially interpret the resulting polynomial.


\ssec{Wiring diagrams and the classical immanant evaluation identity}

Call a directed planar graph $G$ a
{\em wiring diagram}
if it is a concatenation of any combination of
the diagrams
\begin{equation}\label{eq:elenets}
  G_{\emptyset} = 
\begin{tikzpicture}[scale=.4,baseline=-40]
  \draw[-] (0,0) -- (1,0);
  \draw[-] (0,-1) -- (1,-1);
  \draw[-] (.5,-1.6) circle (1pt); \draw[-] (.5,-1.9) circle (1pt); \draw[-] (.5,-2.2) circle (1pt); 
  \draw[-] (0,-3) -- (1,-3);
  \draw[-] (0,-4) -- (1,-4);
  \draw[-] (0,-5) -- (1,-5);
  \draw[-] (0,-6) -- (1,-6);
\end{tikzpicture}
\;, \quad
  G_{[1,2]} = 
\begin{tikzpicture}[scale=.4,baseline=-40]
  \draw[-] (0,0) -- (1,0);
  \draw[-] (0,-1) -- (1,-1);
  \draw[-] (.5,-1.6) circle (1pt); \draw[-] (.5,-1.9) circle (1pt); \draw[-] (.5,-2.2) circle (1pt); 
  \draw[-] (0,-3) -- (1,-3);
  \draw[-] (0,-4) -- (1,-4);
  \draw[-] (0,-5) -- (1,-6); 
  \draw[-] (0,-6) -- (1,-5);
\end{tikzpicture}
\;, \quad
  G_{[2,3]} = 
\begin{tikzpicture}[scale=.4,baseline=-40]
  \draw[-] (0,0) -- (1,0);
  \draw[-] (0,-1) -- (1,-1);
  \draw[-] (.5,-1.6) circle (1pt); \draw[-] (.5,-1.9) circle (1pt); \draw[-] (.5,-2.2) circle (1pt); 
  \draw[-] (0,-3) -- (1,-3);
  \draw[-] (0,-4) -- (1,-5);
  \draw[-] (0,-5) -- (1,-4);
  \draw[-] (0,-6) -- (1,-6);
\end{tikzpicture}
\;, \quad
  G_{[3,4]} = 
\begin{tikzpicture}[scale=.4,baseline=-40]
  \draw[-] (0,0) -- (1,0);
  \draw[-] (0,-1) -- (1,-1);
  \draw[-] (.5,-1.6) circle (1pt); \draw[-] (.5,-1.9) circle (1pt); \draw[-] (.5,-2.2) circle (1pt); 
  \draw[-] (0,-3) -- (1,-4);
  \draw[-] (0,-4) -- (1,-3);
  \draw[-] (0,-5) -- (1,-5);
  \draw[-] (0,-6) -- (1,-6);
\end{tikzpicture}
\;, \quad \dotsc, \quad
  G_{[n-1,n]} = 
\begin{tikzpicture}[scale=.4,baseline=-40]
  \draw[-] (0,0) -- (1,-1);
  \draw[-] (0,-1) -- (1,0);
  \draw[-] (.5,-1.6) circle (1pt); \draw[-] (.5,-1.9) circle (1pt); \draw[-] (.5,-2.2) circle (1pt); 
  \draw[-] (0,-3) -- (1,-3);
  \draw[-] (0,-4) -- (1,-4);
  \draw[-] (0,-5) -- (1,-5);
  \draw[-] (0,-6) -- (1,-6);
\end{tikzpicture}
\;,
\end{equation}
representing the elements 
$e, s_1, s_2, s_3, \dotsc, s_{n-1}$ of $\sn$, respectively. 
Each wiring diagram has $n$ implicit vertices on the left and right,
labeled 
{\em source $1, \dotsc,$ source $n$} 
and 
{\em sink $1, \dotsc,$ sink $n$},
respectively, from bottom to top.
Edges are implicitly oriented from left to right.
Let $\pi = (\pi_1,\dotsc,\pi_n)$ be a sequence of source-to-sink paths
in a
wiring diagram $G$.  We call $\pi$
a (bijective) {\em path family} if there exists a permutation
$w = w_1 \cdots w_n \in \sn$ such that 
$\pi_i$ is a path from source $i$ to sink $w_i$.
In this case, we say more specifically that $\pi$ has {\em type $w$}.
We say that the path family {\em covers} $G$ if it contains every edge
exactly once.

The number of
path families covering
\begin{equation}\label{eq:wdnetwork}
G = G_{[i_1,i_1+1]} \circ \cdots \circ G_{[i_m,i_m+1]}
\end{equation}
is $2^m$: for $j = 1,\dotsc,m$, the two paths
intersecting at the central vertex of $G_{[i_j,i_j+1]}$ either cross or
do not cross at that vertex.  In these two cases, we call the index $j$
a {\em crossing} or {\em noncrossing} of the path family, respectively.
We call (\ref{eq:wdnetwork}) {\em the wiring diagram of} the expression
$\sprod im$, whether or not this expression is reduced.
It is well known that if we have the equality
$\sprod im = v$ in $\sn$,
then $v$ is the type of the unique path family covering $G$
in which all indices $1,\dotsc,m$ are crossings.

Alternatively, one may use the same diagram $G$ to encode the element
$(1 + s_{i_1}) \cdots (1 + s_{i_m})$ of $\zsn$.
The $2^m$ terms in the expansion of this product may be written
and collected as
\begin{equation*}
  \sum_{\beta \in 2^{[m]}} s_{i_1}^{\beta_1} \cdots s_{i_m}^{\beta_m}
  = \sum_{v \in \sn} d_v v,
\end{equation*}
where the $2^m$ binary words $\beta = \beta_1 \cdots \beta_m$
correspond to path families covering $G$ by
\begin{equation}\label{eq:mask}
  \beta_j = \begin{cases}
    1 &\text{if $j$ is a crossing},\\
    0 &\text{otherwise},
  \end{cases}
\end{equation}
and where we define $s_{i_j}^0 = e$.
Thus each coefficient $d_v \in \mathbb N$ in the second sum
counts the number of path families of type $v$ which cover $G$.

Similar to the above
encoding is the use of $G$
to encode the element
$(1 + T_{s_{i_1}}) \cdots (1 + T_{s_{i_m}})$ of $\hnq$.
Expanding this product and collecting terms we have
\begin{equation}\label{eq:hnqwire}
  \sum_{\beta \in 2^{[m]}} (T_{s_{i_1}})^{\beta_1} \cdots (T_{s_{i_m}})^{\beta_m}
  = \sum_{v \in \sn} a_v T_v,
\end{equation}
where binary words $\beta$ correspond to path families as in (\ref{eq:mask}).
Now the coefficients $a_v$ in the expansion belong to $\mathbb N[q]$ and
are defined in terms of a path familiy statistic called {\em defects}.
Call index $j$ a {\em defect} of path family $\pi$ if
the two paths containing the central vertex of $G_{[i_j,i_j+1]}$
have previously crossed an odd number of times.
(Equivalently, a crossing or noncrossing is defective if
the path entering the common vertex on top has a lower source index.)
We will call an index $j$ a {\em proper} crossing or noncrossing if it
is not defective.
Letting
$\dfct(\pi)$ denote the number of
defects in $\pi$ we have~\cite[Prop.\,3.5]{Deodhar90}
\begin{equation}\label{eq:defectdef2}
  a_v =
  \sum_\pi q^{\dfct(\pi)},
\end{equation}
where the sum is over path families of type $v$ which cover $G$.
    

One can enhance a wiring diagram by
associating to each edge
a {\em weight} belonging to some ring $R$, and by defining
the {\em weight of a path} to be the product of its edge weights.
If $R$ is noncommutative,
then one multiplies weights
in the order that the corresponding edges appear in the path.
For a {\em family} $\pi = (\pi_1,\dotsc,\pi_n)$ of $n$
paths in a planar network,
one defines
$\wgt(\pi) = \wgt(\pi_1) \cdots \wgt(\pi_n)$. 
The {\em (weighted) path matrix} $B = B(G) = (b_{i,j})$ of $G$ is defined by
letting $b_{i,j}$ be the sum of weights of all paths in $G$ from source $i$
to sink $j$.
Thus the product $\permmon bw$ is equal to the sum of weights of all
path families of type $w$ in $G$ (covering $G$ or not).
A result known as {\em Lindstr\"om's Lemma}~\cite{KMG}, \cite{LinVRep}
asserts that for row and column sets $I$, $J$ with $|I| = |J|$,
the minor $\det(B_{I,J})$ is equal to the sum of weights of all nonintersecting
path families from sources indexed by $I$ to sinks indexed by $J$.
It is easy to show that path matrices respect concatenation:
$B(G_1 \circ G_2) = B(G_1)B(G_2)$.

Assigning weights to the edges of $G$ (\ref{eq:wdnetwork}) can aid in
the evaluation of a linear function $\theta: \zsn \rightarrow \mathbb Z$
at $(1 + s_{i_1}) \cdots (1 + s_{i_m})$ by relating this evaluation to the
generating function
\begin{equation}\label{eq:imm}
  \imm{\theta}(x) \defeq \sum_{w \in \sn} \theta(w) \permmon xw
  \in \zxn,
\end{equation}
called the {\em $\theta$-immanant} in \cite[Sec.\,3]{StanPos}.
In particular,
for $j = 1,\dotsc, m$, we assign weight $1$ to the
$n-2$ horizontal edges of $G_{[i_j,i_j+1]}$, and we assign (commuting)
indeterminate weights
$z_{i_j,j,1}$, $z_{i_j,j,2}$, $z_{i_j+1,j,1}$, $z_{i_j+1,j,2}$
to the remaining nonhorizontal edges $a, b, c, d$, respectively,
\begin{equation}\label{eq:onestar}
\begin{tikzpicture}[scale=.7,baseline=-25]
  \draw[-] (0,0) -- (1.2,-1) node[above right,midway,xshift=-1mm,yshift=-1mm] {c};
  \draw[-] (0,-2) -- (1.2,-1) node[below right,midway,xshift=-1mm,yshift=1mm] {a};
  \draw[-] (2.4,0) -- (1.2,-1) node[above left,midway,xshift=1mm,yshift=-1mm] {d};
  \draw[-] (2.4,-2) -- (1.2,-1) node[below left,midway,xshift=1mm,yshift=2mm] {b};
  \draw[-,fill] (0,0) circle (2pt);  \draw[-,fill] (0,-2) circle (2pt);
  \draw[-,fill] (2.4,0) circle (2pt);  \draw[-,fill] (2.4,-2) circle (2pt);
  \draw[-,fill] (1.2,-1) circle (2pt);
\end{tikzpicture}\;.
\end{equation} 
Thus wiring diagrams
corresponding to expressions $s_{i_1} s_{i_2} s_{i_3} = s_1s_2s_1$
and $s_{i_4}s_{i_5}s_{i_6} = s_1s_2s_1$ are weighted differently
because of the different indexing of the generators.
Let $z_G$
be the product of all $4m$ indeterminates $z_{i,j,k}$,
and for $f \in \mathbb{Z}[z_{1,1,1} ,\dotsc, z_{i_m,m,2}]$, let $[z_G]f$ denote
the coefficient of $z_G$ in $f$.
Then we have the following immanant evaluation identity
for wiring diagrams (cf.\,\cite[p.\,1081]{StemConj}).
\begin{prop}\label{p:stem}
Assign weights to the edges of $G$ (\ref{eq:wdnetwork}) as above
and let $B$ be the resulting path matrix.
Then for any linear function $\theta: \zsn \rightarrow \mathbb Z$ we have
\begin{equation}\label{eq:stem}
\theta((1 + s_{i_1}) \cdots (1 + s_{i_m})) = [z_G]\imm{\theta}(B).
\end{equation}
\end{prop}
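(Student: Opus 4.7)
My plan is to expand both sides of (\ref{eq:stem}) directly and verify that they reduce to the same weighted sum $\sum_{w \in \sn} d_w \theta(w)$, where $d_w$ denotes the number of path families of type $w$ that cover $G$. For the left-hand side, I would distribute the product as
\begin{equation*}
(1 + s_{i_1})\cdots(1 + s_{i_m}) = \sum_{\beta \in 2^{[m]}} s_{i_1}^{\beta_1}\cdots s_{i_m}^{\beta_m},
\end{equation*}
and invoke the bijection (\ref{eq:mask}): each binary word $\beta$ corresponds to a unique path family covering $G$ whose crossings occur precisely at the indices $j$ with $\beta_j = 1$, and the type of this path family in $\sn$ is the product $s_{i_1}^{\beta_1} \cdots s_{i_m}^{\beta_m}$. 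Collecting terms by type therefore gives $(1 + s_{i_1}) \cdots (1 + s_{i_m}) = \sum_w d_w w$, and applying $\theta$ yields $\sum_w d_w \theta(w)$.

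For the right-hand side, I would use the definition of $\imm{\theta}$ in (\ref{eq:imm}) to write $\imm{\theta}(B) = \sum_{w \in \sn} \theta(w)\, b_{1,w_1} \cdots b_{n,w_n}$, and then recall that each entry $b_{i,j}$ of the path matrix is by construction the sum of weights of all source-$i$-to-sink-$j$ paths in $G$. Consequently $b_{1,w_1} \cdots b_{n,w_n}$ is the generating function $\sum_\pi \wgt(\pi)$ over all path families $\pi$ of type $w$ in $G$. The observation that then closes the argument is the following: every path family covering $G$ traverses each of the $4m$ slanted edges exactly once and so has weight exactly $z_G$, while every path family that does not cover $G$ must omit at least one slanted edge and therefore contributes a monomial of $z$-degree strictly less than $4m$, which the $[z_G]$-extraction annihilates. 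Hence $[z_G]\, b_{1,w_1} \cdots b_{n,w_n} = d_w$, and summing over $w$ with coefficient $\theta(w)$ matches the left-hand side.

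I do not expect a genuine obstacle here: the proof is essentially a bookkeeping exercise that links a product expansion in $\zsn$ with Lindstr\"om-style edge tracking on the immanant side. The only delicate point is the $z_G$-coefficient identification, which uses two structural features of the weighting scheme in (\ref{eq:onestar}): the $4m$ slanted edges carry pairwise distinct indeterminate weights whose product is $z_G$, and the remaining (horizontal) edges carry weight $1$ and so do not perturb the $z$-degree of any path-family weight.
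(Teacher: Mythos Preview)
Your proof is correct. The paper itself does not give a proof of this proposition; it attributes the identity to Stembridge (the parenthetical ``cf.\ \cite[p.\,1081]{StemConj}'' preceding the statement) and only illustrates it with the $n=3$ example that follows. So your argument is supplying what the paper omits, and it does so along the natural lines the paper's surrounding discussion suggests.

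One small simplification: your case distinction for path families that fail to cover $G$ is in fact vacuous. In each elementary piece $G_{[i_j,i_j+1]}$, a bijective path family necessarily sends one path through the incoming edge from source $i_j$ and another through the incoming edge from source $i_j+1$, and these two paths exit through the two outgoing edges (in some order). Hence all four slanted edges are traversed exactly once, and every bijective path family automatically covers $G$ with weight exactly $z_G$. So $b_{1,w_1}\cdots b_{n,w_n} = d_w\, z_G$ on the nose, with no lower-degree monomials to kill. This does not affect the validity of your argument, only its length.
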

    
To illustrate, we let $n = 3$ and consider
the element
\begin{equation*}
  (1+s_1)(1+s_2)(1+s_1) = 2 + 2s_1 + s_2 + s_1s_2 + s_2s_1 + s_1s_2s_1
\end{equation*}
and its wiring diagram
\begin{equation}\label{eq:Gbraid}
  G
= G_{[i_1,i_1+1]} \circ G_{[i_2,i_2+1]} \circ G_{[i_3,i_3+1]}
  = G_{[1,2]} \circ G_{[2,3]} \circ G_{[1,2]}.
\end{equation}
Assigning weights to the edges of $G$ we have
\begin{equation}\label{eq:Gbraidfig}
  \resizebox{6cm}{!}{
\raisebox{-2.2cm}{
    \begin{tikzpicture}
  \node at (-1,0) {\large $3$};
  \node at (-1,-2) {\large $2$};
  \node at (-1,-4) {\large $1$};
  \node at (7,0) {\large $3$};
  \node at (7,-2) {\large $2$};
  \node at (7,-4) {\large $1$};
  \draw[-] (0,0)  -- (2,0)
                  -- (3,-1) node[midway,left]  {\large $z_{3,2,1}$}
                  -- (4,0)  node[midway,right] {\large $z_{3,2,2}$}
                  -- (6,0);
  \draw[-] (0,-2) -- (1,-3) node[midway,left]  {\large $z_{2,1,1}$}
                  -- (2,-2) node[midway,right] {\large $z_{2,1,2}$}
                  -- (3,-1) node[midway,left]  {\large $z_{2,2,1}$}
                  -- (4,-2) node[midway,right] {\large $z_{2,2,2}$}
                  -- (5,-3) node[midway,left]  {\large $z_{2,3,1}$}
                  -- (6,-2) node[midway,right] {\large $z_{2,3,2}$};
  \draw[-] (0,-4) -- (1,-3) node[midway,left]  {\large $z_{1,1,1}$}
                  -- (2,-4) node[midway,right] {\large $z_{1,1,2}$}
                  -- (4,-4) 
                  -- (5,-3) node[midway,left]  {\large $z_{1,3,1}$}
                  -- (6,-4) node[midway,right] {\large $z_{1,3,2}$};
\filldraw (0,0) circle (.7mm);
\filldraw (2,0) circle (.7mm);
\filldraw (4,0) circle (.7mm);
\filldraw (6,0) circle (.7mm);
\filldraw (3,-1) circle (.7mm);
\filldraw (0,-2) circle (.7mm);
\filldraw (2,-2) circle (.7mm);
\filldraw (4,-2) circle (.7mm);
\filldraw (6,-2) circle (.7mm);
\filldraw (1,-3) circle (.7mm);
\filldraw (5,-3) circle (.7mm);
\filldraw (0,-4) circle (.7mm);
\filldraw (2,-4) circle (.7mm);
\filldraw (4,-4) circle (.7mm);
\filldraw (6,-4) circle (.7mm);                  
\end{tikzpicture}
}}
\end{equation}
and $z_G = z_{1,1,1} \cdots z_{3,2,2}$.
The weighted path matrix of $G$
is
\begin{equation}\label{eq:Bex}
  \begin{aligned}
    B &=
  \begin{bmatrix}
    z_{1,1,1}z_{1,1,2} & z_{1,1,1}z_{2,1,2} & 0 \\
    z_{2,1,1}z_{1,1,2} & z_{2,1,1}z_{2,1,2} & 0 \\
    0              & 0               & 1 
  \end{bmatrix}\ntksp\ntksp
  \begin{bmatrix}
    1 & 0              & 0               \\
    0 & z_{2,2,1}z_{2,2,2} & z_{2,2,1}z_{3,2,2} \\
    0 & z_{3,2,1}z_{2,2,2} & z_{3,2,1}z_{3,2,2} 
  \end{bmatrix}\ntksp\ntksp
  \begin{bmatrix}
    z_{1,3,1}z_{1,3,2} & z_{1,3,1}z_{2,3,2} & 0 \\
    z_{2,3,1}z_{1,3,2} & z_{2,3,1}z_{2,3,2} & 0 \\
    0              & 0               & 1 
  \end{bmatrix}\\
  &= \begin{bmatrix}
    z_{1,1,1}z_Uz_{1,3,2} + z_{1,1,1}z_Dz_{1,3,2}
    & z_{1,1,1}z_Uz_{2,3,2} + z_{1,1,1}z_Dz_{2,3,2}
    & z_{1,1,1}z_{2,1,2}z_{2,2,1}z_{3,2,2} \\
    z_{2,1,1}z_Uz_{1,3,2} + z_{2,1,1}z_Dz_{1,3,2}
    & z_{2,1,1}z_Uz_{2,3,2} + z_{2,1,1}z_Dz_{2,3,2}
    & z_{2,1,1}z_{2,1,2}z_{2,2,1}z_{3,2,2} \\
    z_{3,2,1}z_{2,2,2}z_{2,3,1}z_{1,3,2}
    & z_{3,2,1}z_{2,2,2}z_{2,3,1}z_{2,3,2}
    & z_{3,2,1}z_{3,2,2}
  \end{bmatrix},
  \end{aligned}
\end{equation}
where $z_U = z_{2,1,2}z_{2,2,1}z_{2,2,2}z_{2,3,1}$,
$z_D = z_{1,1,2}z_{1,3,1}$.

Now we consider the linear function
$\theta: \mathbb Z[\mfs 3] \rightarrow \mathbb Z$
defined by
$\theta(e) = 1$, $\theta(s_1s_2s_1) = -1$, $\theta(w) = 0$ otherwise.
Computing the left-hand side of (\ref{eq:stem}) we have
\begin{equation*}
  \theta((1+s_1)(1+s_2)(1+s_1)) = 2 - 1 = 1.
  \end{equation*}
To compute the right-hand side of (\ref{eq:stem}),
we first factor the immanant as
\begin{equation*}
  \imm{\theta}(x) = x_{1,1}x_{2,2}x_{3,3} - x_{1,3}x_{2,2}x_{3,1} =
  \det(x_{13,13})x_{2,2}.
\end{equation*}
By Lindstr\"om's Lemma and inspection of
the wiring diagram (\ref{eq:Gbraidfig}), we have
\begin{equation*}
  [z_G]\imm{\theta}(B) = [z_G]\det(B_{13,13})b_{2,2} = 1,
  \end{equation*}
since exactly one
family of paths $\pi = (\pi_1, \pi_2, \pi_3)$ from all sources
to the corresponding sinks satisfies
\begin{enumerate}
\item $\pi_1$ and $\pi_3$ do not intersect,
\item $\pi$ covers $G$ and therefore has weight $z_G$.
\end{enumerate}



\ssec{The $q$-immanant evaluation identity for wiring diagrams}
It is natural to ask for a $q$-analog of Proposition~\ref{p:stem}
which applies to
the computation of
$\theta_q((1 + T_{s_{i_1}}) \cdots (1 + T_{s_{i_m}}))$
for a linear function $\theta_q: \hnq \rightarrow \zqq$,
and which uses the generating function
\begin{equation}\label{eq:qimm}
  \imm{\theta_q}(x) \defeq \sum_{w \in \sn} \theta_q(T_w) \qiew \permmon xw
  \in \Ann
\end{equation}
introduced in \cite[Eqn.\,(4.5)]{KSkanQGJ}.

We begin by assigning weights to the edges of the wiring diagram $G$
(\ref{eq:wdnetwork}) exactly as in (\ref{eq:onestar}).
But now
we define two indeterminates $z_{h,j,k}$, $z_{h',j',k'}$
to commute if $j \neq j'$ or if $k \neq k'$; otherwise we impose the relation
\begin{equation}\label{eq:zrelation}
z_{i_j+1,j,k} z_{i_j,j,k} 
= \qp12 z_{i_j,j,k} z_{i_j+1,j,k}.
\end{equation}
Let $Z_G$ be the quotient of the noncommutative ring
\begin{equation*}
    \mathbb Z[\qp12, \qm12]
    \langle z_{i_j,j,1}, z_{i_j,j,2}, z_{i_j+1,j,1}, z_{i_j+1,j,2}
    \,|\, j=1,\dotsc, m, \rangle
    \end{equation*}
modulo the ideal generated by the above commuting and quasicommuting
relations, and assume that $\qp12$, $\qm12$
commute with all other indeterminates.
Let $z_G$
be the product of all $4m$ indeterminates $z_{i,j,k}$, in lexicographic order.

This
small change in the indeterminates $z_{1,1,1}, \dotsc, z_{i_m,m,2}$
does not imply that the most naive $q$-analog of Proposition~\ref{p:stem}
holds, however.  Indeed, the evaluation of an element of $\A$ at a matrix
is not well defined unless the entries of that matrix satisfy the
relations (\ref{eq:anqdef}).
We therefore define the $\zqq$-linear map
\begin{equation}\label{eq:sigmadef}
\begin{aligned}
  \sigma_B: \annnq &\rightarrow \zqq\\
\permmon xv &\mapsto [z_G] \permmon bv,
\end{aligned}
\end{equation}
where $[z_G] \permmon bv$ denotes the coefficient of $z_G$ in $\permmon bv$,
taken after $\permmon bv$ 
is expanded in the lexicographic basis of $Z_G$.
Note that the ``substitution'' $x_{i,j} \mapsto b_{i,j}$ is performed
only for monomials of the form $x^{e,v}$ in $\Ann$:
we define $\sigma_B(x^{u,w})$ by first expanding $x^{u,w}$
in the basis $\{ x^{e,v} \,|\, v \in \sn \}$, and {\em then} performing
the substitution.

For example
let us compute
$\sigma_B(x_{2,2}x_{1,1}x_{3,3})$ for the path matrix $B$ of the wiring
diagram in (\ref{eq:Bex}).
Using (\ref{eq:anqdef}) and linearity of $\sigma_B$, we write
\begin{equation}\label{eq:x22x11x33}
  \begin{aligned}
    \sigma_B(x_{2,2}x_{1,1}x_{3,3})
    &= \sigma_B(x_{1,1}x_{2,2}x_{3,3}) + (\qdiff)\sigma_B(x_{1,2}x_{2,1}x_{3,3})\\
    &= [z_G]b_{1,1}b_{2,2}b_{3,3} + (\qdiff)[z_G]b_{1,2}b_{2,1}b_{3,3}.
    \end{aligned}
\end{equation}
Expanding $b_{1,1}b_{2,2}b_{3,3}$
and
omitting terms with repeated indeterminates,
we have
\begin{equation*}
  \begin{aligned}
    {[z_G]} b_{1,1}b_{2,2}b_{3,3} &=
  [z_G](z_{1,1,1} z_{1,1,2} z_{1,3,1} z_{1,3,2} z_{2,1,1} z_{2,1,2} z_{2,2,1} z_{2,2,2} z_{2,3,1} z_{2,3,2} z_{3,2,1} z_{3,2,2} \\
  &\quad+ z_{1,1,1} z_{2,1,2} z_{2,2,1} z_{2,2,2} z_{2,3,1} z_{1,3,2} z_{2,1,1} z_{1,1,2} z_{1,3,1} z_{2,3,2} z_{3,2,1} z_{3,2,2}).
  \end{aligned}
  \end{equation*}
Sorting indeterminates into lexicographic order and using
(\ref{eq:zrelation}), we see that this is
\begin{equation*}
  [z_G] (z_G + q z_G) = 1+q.
  \end{equation*}
Similarly computing $[z_G] b_{1,2}b_{2,1}b_{3,3}$,
we obtain
$(\qp12 + \qp32)$.
Thus Equation \eqref{eq:x22x11x33} gives
\[\sigma_B(x_{2,2}x_{1,1}x_{3,3}) = (1+q) + (\qdiff)(\qp12 + \qp32) = q+q^2. \]

For special combinations of $u, v \in \sn$ and an expression
$\sprod im$,
there are simple rules for computing
$\sigma_B(x^{u,v})$.
\begin{prop}\label{p:sigmareduced}
  Fix $u, v \in \sn$, and an expression $\sprod i\ell$
  whose wiring diagram has weighted path matrix $B$.
  Then we have the following.
  \begin{enumerate}[(i)]
  \item $\sigma_B(x^{u,v}) = 0$ unless $\sprod i\ell$ contains a subexpression for $u^{-1}v$.
  \item If $\sprod i\ell$ is a reduced expression for $w \in \sn$,
    then
    $\sigma_B(x^{e,w}) = \qew$, and
    $\sigma_B(x^{e,v}) = 0$ unless $v \leq w$.    
  \end{enumerate}
\end{prop}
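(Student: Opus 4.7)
The plan is to establish (ii) first, then deduce (i) by expanding $x^{u,v}$ in the basis $\{x^{e,w} \mid w \in \sn\}$ via Proposition~\ref{p:ruvtw}.

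For (ii), I will start from $\sigma_B(x^{e,v}) = [z_G]\, b_{1,v_1} \cdots b_{n,v_n}$ and expand the product as a sum over source-to-sink path families $\pi$ of type $v$ in $G$, weighted by $\wgt(\pi_1)\cdots\wgt(\pi_n)$. Since $z_G$ contains each of the $4\ell$ nonhorizontal-edge indeterminates exactly once, only families covering $G$ contribute to $[z_G]$, and such families correspond bijectively to binary masks $\beta \in 2^{[\ell]}$ with type $\prod_{j:\beta_j=1} s_{i_j}$. Under the hypothesis that $\sprod i\ell$ is reduced for $w$, the Subword Property of Bruhat order immediately forces $\sigma_B(x^{e,v})=0$ for $v \not\leq w$; and for $v = w$ the only subexpression of $\sprod i\ell$ equal to $w$ is the full expression itself, since any proper subexpression has fewer than $\ell(w)$ letters and so multiplies to an element of length strictly less than $\ell(w)$.

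It then remains to compute $[z_G]\wgt(\pi_1)\cdots\wgt(\pi_n)$ for the unique all-crossings family. Since indeterminates at distinct crossings commute, the calculation splits into an independent contribution from each of the $\ell$ crossings. At crossing $j$ the path entering lower-left carries $z_{i_j,j,1}$ followed immediately by $z_{i_j+1,j,2}$ in its weight, while the other path carries $z_{i_j+1,j,1}$ followed by $z_{i_j,j,2}$. A short case check (depending only on which of these two paths has the smaller source index, and hence which pair is multiplied first in $\wgt(\pi_1)\cdots\wgt(\pi_n)$) shows that sorting the four weights into lex order using the commuting and quasicommuting relations of $Z_G$ requires in each case exactly one application of (\ref{eq:zrelation}), contributing a single factor of $\qp 12$. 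Multiplied across the $\ell$ crossings, this yields $\qp{\ell(w)}{2} = \qew$. This per-crossing bookkeeping is the main technical obstacle; the rest of the argument is structural.

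For (i), I will apply Proposition~\ref{p:ruvtw} with $t=e$ to write $x^{u,v} = \sum_{w \geq u^{-1}v} r_{u,v,e,w}(\qdiff)\, x^{e,w}$, and then argue that if $\sprod i\ell$ admits no subexpression for $u^{-1}v$, it admits no subexpression for any $w$ in the support. Indeed, any subexpression of $\sprod i\ell$ equal to such a $w$ could be reduced via $s_i^2=e$ deletions to a reduced subexpression of $\sprod i\ell$ for $w$; by $w \geq u^{-1}v$ and the Subword Property, that reduced word then contains a further reduced subexpression for $u^{-1}v$, contradicting the hypothesis. Combining with (ii) gives $\sigma_B(x^{e,w})=0$ for every $w$ in the support, and hence $\sigma_B(x^{u,v})=0$ by linearity.
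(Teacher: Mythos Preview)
Your argument is correct, and for (ii) it matches the paper's essentially verbatim: only covering families contribute to $[z_G]$, the unique type-$w$ covering family for a reduced word is the all-crossings one, and each crossing contributes a factor of $\qp12$. One small simplification you miss: since the expression is reduced, the all-crossings family has no defects, so at \emph{every} crossing the path entering from below has the smaller source index. Hence only one of your two cases ever arises, and the paper dispenses with the case split entirely, just asserting that $z_{i_j,j,1}, z_{i_j+1,j,2}$ precede $z_{i_j+1,j,1}, z_{i_j,j,2}$ in the product.

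For (i) your route genuinely differs from the paper's. The paper argues directly that if $b_{u_1,v_1}\cdots b_{u_n,v_n}$ is nonzero then $G$ is covered by a path family of type $u^{-1}v$, yielding the required subexpression. This tacitly identifies $\sigma_B(x^{u,v})$ with $[z_G]\,b_{u_1,v_1}\cdots b_{u_n,v_n}$ for general $u$, which is true (the entries of each elementary path matrix satisfy the relations~(\ref{eq:anqdef}) in $Z_G$, and these relations are preserved under matrix product by the bialgebra structure of $\A$) but is not made explicit. Your approach---expand $x^{u,v}$ in the natural basis via Proposition~\ref{p:ruvtw} and then use the Subword Property to rule out subexpressions for every $w \geq u^{-1}v$---sidesteps this issue and is more self-contained.

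One presentational point: when you write ``combining with (ii)'' in your (i) argument, what you actually need is the \emph{general} vanishing statement $\sigma_B(x^{e,w}) = 0$ whenever $\sprod i\ell$ has no subexpression for $w$, which you established at the start of your (ii) argument \emph{before} invoking reducedness. Statement (ii) as written applies only to reduced expressions, so the citation is imprecise. It would be cleaner to isolate that general observation separately, or at least to cite it as such rather than as ``(ii).''
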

\begin{proof}
  (i) Suppose that $b_{u_1,v_1} \cdots b_{u_n,v_n} \neq 0$.
  Then there is a path family $\pi$ of type $u^{-1}v$ which covers
  the wiring diagram $G$ of $s_{i_1}\cdots s_{i_\ell}$.
  Define the binary word $\beta = \beta_1 \cdots \beta_\ell$ by
  \begin{equation*}
    \beta_j = \begin{cases}
      1 &\text{if two paths of $\pi$ cross at the central vertex of
        $G_{[i_j,i_j+1]}$,}\\
      0 &\text{otherwise.}
    \end{cases}
  \end{equation*}
  Then we have $\type(\pi) = s_{i_1}^{\beta_1} \cdots s_{i_\ell}^{\beta_\ell}$ and
  the factors with $\beta_j = 1$ form a subexpression of
  $\sprod i\ell$ which is equal in $\sn$ to $\type(\pi)$.

  (ii) If $\sprod i\ell$ is reduced, then there is exactly
  one path family $\pi$ which covers $G$ and has type $u^{-1}v$.
  Its component paths cross at the central vertex of each
  simple transposition graph $G_{[i_j,i_j+1]}$.
  Each crossing causes the variables $z_{i_j,j,1}$, $z_{i_j+1,j,2}$
  to appear earlier than the variables $z_{i_j+1,j,1}$, $z_{i_j,j,2}$,
  contributing $\qp12$ to $\sigma_B(x^{e,w})$.
  Now consider a permutation $v$ with $v \not \leq w$. By definition
  of the Bruhat order (\ref{eq:bruweakdef})
  there is no subexpression of $s_{i_1}\cdots s_{i_\ell}$ which
  is an expression for $v$.  By (i.) we have $\sigma_B(x^{e,v}) = 0$.
\end{proof}


An important special case of Proposition~\ref{p:sigmareduced}
concerns wiring diagrams for a single generator.
\begin{cor}\label{c:sigmareduced}
  Let wiring diagram $H$ of the reduced
  expression $s_{i_1} = s_j$ have weighted path matrix $C$.
  For $u,v \in \sn$ we have
  \begin{equation*}
    \sigma_C(x^{u,v}) = 
    \begin{cases}
      \qp12 &\text{if $u = vs_j$},\\
      q &\text{if $u = v$ and $vs_j < v$},\\
      1 &\text{if $u = v$ and $vs_j > v$},\\
      0 &\text{if $u \not \in \{ v, vs_j \}$}.
    \end{cases}
  \end{equation*}
  \end{cor}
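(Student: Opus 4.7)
The plan is to reduce $\sigma_C(x^{u,v})$ to a linear combination of $\sigma_C(x^{e,e})$ and $\sigma_C(x^{e,s_j})$ via the expansion polynomials $r_{u,v,e,w}$ from Section~\ref{s:qmb}, and then to read off the answer from formulas already established there. First I would apply Proposition~\ref{p:sigmareduced}(i): the only subexpressions of the one-letter expression $s_j$ are the empty expression (representing $e$) and $s_j$ itself, so $\sigma_C(x^{u,v}) = 0$ unless $u^{-1}v \in \{e,s_j\}$, i.e.\ unless $u \in \{v,vs_j\}$. This immediately settles the fourth case of the corollary.

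For the remaining cases I would expand $x^{u,v}$ in the basis $\{x^{e,w} \mid w \in \sn\}$ using~(\ref{eq:rdefn2}) and apply $\sigma_C$ term by term. Proposition~\ref{p:sigmareduced}(ii) applied to the reduced expression $s_j$ gives $\sigma_C(x^{e,s_j}) = \qp12$ and $\sigma_C(x^{e,w}) = 0$ unless $w \leq s_j$, while a direct computation (since $b_{i,i} = 1$ for $i \notin \{j,j+1\}$ and the product $b_{j,j}b_{j+1,j+1}$ is already in lexicographic order and equals $z_H$) yields $\sigma_C(x^{e,e}) = 1$. Thus
\begin{equation*}
  \sigma_C(x^{u,v}) = r_{u,v,e,e}(\qdiff) + \qp12\, r_{u,v,e,s_j}(\qdiff).
\end{equation*}

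Finally I would read off the two needed $r$-polynomials. When $u = vs_j$, Proposition~\ref{p:ruvtw} (with $u^{-1}v = s_j$) gives $r_{vs_j,v,e,e} = 0$, since $e \not\geq s_j$, and $r_{vs_j,v,e,s_j} = 1$, so $\sigma_C(x^{vs_j,v}) = \qp12$. When $u = v$, the same proposition gives $r_{v,v,e,e} = 1$, and formula~(\ref{eq:strategicexample2}) gives $r_{v,v,e,s_j}(q_1) = q_1$ or $0$ according as $vs_j < v$ or $vs_j > v$; substituting $q_1 = \qp12 - \qm12$ then produces $\sigma_C(x^{v,v}) = 1$ in the ascending case and $\sigma_C(x^{v,v}) = 1 + \qp12(\qp12 - \qm12) = q$ in the descending case. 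There is no serious obstacle once Section~\ref{s:qmb} is in hand; the only subtlety is that $\sigma_C$ is defined through the lexicographic expansion in~$\A$, so routing the computation through the polynomials $r_{u,v,e,w}$---rather than attempting a naive substitution $x_{i,j} \mapsto b_{i,j}$---is what makes the argument go through.
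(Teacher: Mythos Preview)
Your proof is correct and follows essentially the same route as the paper's: expand $x^{u,v}$ in the basis $\{x^{e,w}\}$, use Proposition~\ref{p:sigmareduced} to reduce to $w\in\{e,s_j\}$, and then read off the two required $r$-polynomials from Proposition~\ref{p:ruvtw} and~(\ref{eq:strategicexample2}). The only cosmetic differences are that the paper computes $\sigma_C(x^{e,s_j})=\qp12$ by an explicit manipulation of the $z$-variables rather than by quoting Proposition~\ref{p:sigmareduced}(ii), and it obtains the vanishing for $u\notin\{v,vs_j\}$ from the expansion itself rather than from part~(i); neither difference is substantive.
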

\begin{proof}
The matrix $C$ is obtained from the $n \times n$ identity matrix by
replacing its $[j,j+1]$, $[j,j+1]$ submatrix by
\begin{equation*}
\begin{bmatrix}
z_{j,1,1} z_{j,1,2} & z_{j,1,1} z_{j+1,1,2} \\
z_{j+1,1,1} z_{j,1,2} & z_{j+1,1,1} z_{j+1,1,2}
\end{bmatrix},
\end{equation*}
and we have $z_H = z_{j,1,1}z_{j,1,2}z_{j+1,1,1}z_{j+1,1,2}$.
Using Proposition~\ref{p:ruvtw} to expand $x^{u,v}$
in the natural basis of $\annnq$ and recalling that
Proposition~\ref{p:sigmareduced} (ii) implies $\sigma_C(x^{e,w})$
to vanish unless
$w \in \{ e, s_j \}$, we can write
\begin{equation*}
  \sigma_C(x^{u,v}) = \sigma_C(x^{e,u^{-1}v})
  + \sumsb{w > u^{-1}v\\w \leq s_j} r_{u,v,e,w}(\qdiff)\sigma_C(x^{e,w}).
\end{equation*}
Now observe that the restrictions on $w$ imply this expression to vanish unless
$u \in \{ v, vs_j \}$.
When $u = vs_j$ we have
\begin{equation}\label{eq:xvsjv}
  \begin{aligned}
  \sigma_C(x^{vs_j,v}) = \sigma_C(x^{e,s_j})
  &= [z_H] z_{j,1,1}z_{j+1,1,2}z_{j+1,1,1}z_{j,1,2} \\
  &= [z_H] \qp12 z_{j,1,1}z_{j,1,2}z_{j+1,1,2}z_{j+1,1,1} 
  = \qp12,
  \end{aligned}
\end{equation}
since $z_{j,1,2}$ commutes with $z_{j+1,1,1}$ and quasicommutes with $z_{j+1,1,2}$.
On the other hand, when $u = v$, we may use (\ref{eq:strategicexample})
and (\ref{eq:xvsjv}) to obtain
\begin{equation}\label{eq:sigmagenerator}
    \begin{aligned}
      \sigma_C(x^{v,v}) &= \sigma_C(x^{e,e})
      + r_{v,v,e,s_j}(\qdiff)\sigma_C(x^{e,s_j}) \\
      &= [z_H] z_{j,1,1}z_{j,1,2}z_{j+1,1,1}z_{j+1,1,2}
      + \qp12 \begin{cases}
        \qdiff &\text{if $vs_j < v$}, \\
        0      &\text{otherwise}
      \end{cases}\\
      &= \begin{cases}
        q &\text{if $vs_j < v$},\\
        1 &\text{otherwise}.
      \end{cases}
    \end{aligned}
\end{equation}
  \end{proof}
  
The map $\sigma_B$ behaves well with respect to concatenation of
wiring diagrams.
\begin{prop}\label{p:sigmacoprod}
Let wiring diagrams $G$, $H$ of expressions
$s_{i_1} \ntnsp\cdots s_{i_k}$, $s_{i_{k+1}} \ntnsp\cdots s_{i_m}$
have weighted path matrices $B$, $C$, respectively. 
Then for all $u,w \in \sn$ we have
\begin{equation}\label{eq:sigmacoprod}
\sigma_{BC}(x^{u,w}) = \sum_{v \in \sn} \sigma_B(x^{u,v}) \sigma_C(x^{v,w}).
\end{equation}
\end{prop}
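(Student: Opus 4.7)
The plan is to exploit two structural facts. First, path matrices respect concatenation, so the weighted path matrix of $G\circ H$ is $BC$ and $z_{GH}=z_G\cdot z_H$. Second, the indeterminates attached to $G$ have middle index $j\in\{1,\ldots,k\}$ while those attached to $H$ have $j\in\{k+1,\ldots,m\}$, so by the commuting rule preceding (\ref{eq:zrelation}) every $z$-variable coming from $G$ commutes with every $z$-variable coming from $H$. In particular every entry $b_{i,j}$ commutes with every entry $c_{p,q}$ in the ambient noncommutative ring.

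Granting temporarily the identification $\sigma_B(x^{u,v}) = [z_G]\,b_{u_1,v_1}\cdots b_{u_n,v_n}$ (and its analogue for $\sigma_C$), the argument is a rearrangement. I would start from the matrix product expansion
\[
(BC)_{u_1,w_1}\cdots(BC)_{u_n,w_n} = \sum_{(v_1,\ldots,v_n)\in[n]^n} b_{u_1,v_1}\,c_{v_1,w_1}\cdots b_{u_n,v_n}\,c_{v_n,w_n},
\]
then shuffle every $c$-factor past the $b$-factors on its right to obtain
\[
\sum_{(v_1,\ldots,v_n)}\bigl(b_{u_1,v_1}\cdots b_{u_n,v_n}\bigr)\bigl(c_{v_1,w_1}\cdots c_{v_n,w_n}\bigr).
\]
Because the $b$'s involve only $z_G$-variables and the $c$'s only $z_H$-variables, and these two sets of variables commute, the coefficient of $z_Gz_H$ factors as
\[
[z_G]\bigl(b_{u_1,v_1}\cdots b_{u_n,v_n}\bigr)\cdot[z_H]\bigl(c_{v_1,w_1}\cdots c_{v_n,w_n}\bigr),
\]
and by the path-cover argument of Proposition~\ref{p:sigmareduced}(i) the first factor vanishes unless $v$ is a permutation. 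What remains is exactly $\sum_{v\in\sn}\sigma_B(x^{u,v})\,\sigma_C(x^{v,w})$.

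The main subtle point, and likely the hardest step, is justifying the naive substitution rule used above, namely that $\sigma_B(x^{u,v})=[z_G]\,b_{u_1,v_1}\cdots b_{u_n,v_n}$ even when $u\ne e$. The definition of $\sigma_B$ demands first rewriting $x^{u,v}$ in the natural basis $\{x^{e,\cdot}\}$ via (\ref{eq:anqdef}), and then substituting $x_{i,j}\mapsto b_{i,j}$. For these two procedures to agree, one needs the entries $b_{i,j}$ themselves to satisfy the quantum matrix relations (\ref{eq:anqdef}) inside $Z_G$, so that $x_{i,j}\mapsto b_{i,j}$ descends to a genuine algebra homomorphism $\A\to Z_G$. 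This is the standard quantum Lindström-type consequence of the quasi-commuting relations (\ref{eq:zrelation}) imposed on the $z$'s; once verified (by checking each of the four relations in (\ref{eq:anqdef}) locally at a single crossing $G_{[i_j,i_j+1]}$ and then transporting through the remaining commuting edges), the rearrangement above is immediate.
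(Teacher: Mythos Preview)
Your argument is correct and is essentially the paper's own proof: expand $(BC)_{u_1,w_1}\cdots(BC)_{u_n,w_n}$, use commutation of the $G$-variables with the $H$-variables to separate the $b$'s from the $c$'s, discard the non-permutation terms, and factor $[z_{G\circ H}]$ as $[z_G]\cdot[z_H]$.

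Two small remarks. First, the paper simply writes $\sigma_{BC}(x^{u,w}) = [z_{G\circ H}]\,a_{u_1,w_1}\cdots a_{u_n,w_n}$ as the opening line and never pauses over it; you are right that this tacitly uses $\sigma_B(x^{u,v})=[z_G]\,b_{u_1,v_1}\cdots b_{u_n,v_n}$ for general $u$, which in turn rests on the fact that the entries of a weighted path matrix satisfy the relations~(\ref{eq:anqdef}) in $Z_G$. Your sketch (verify~(\ref{eq:anqdef}) for the $2\times2$ block of a single $G_{[i_j,i_j+1]}$ using~(\ref{eq:zrelation}), then invoke the coproduct/comultiplication property of $\A$ to pass to concatenations) is exactly the right justification, and in this respect your write-up is more careful than the paper's. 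Second, your appeal to Proposition~\ref{p:sigmareduced}(i) for the vanishing when $(v_1,\dotsc,v_n)$ is not a permutation is slightly informal, since that proposition is stated for $u,v\in\sn$; the paper argues this point directly by observing that a repeated index forces either a zero matrix entry or a repeated $z$-indeterminate, so the coefficient of the squarefree monomial $z_{G\circ H}$ is zero.
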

\begin{proof}
  Write $A = (a_{i,j}) = BC$ for the weighted path matrix of $G \circ H$,
  and consider
\begin{equation*} 
\begin{aligned}
  \sigma_{BC}(x^{u,w})
  &= [z_{G \circ H}]a_{u_1,w_1} \cdots a_{u_n,w_n}\\
  &= [z_{G \circ H}] \Big( \sum_{j_1=1}^n b_{u_1,j_1} c_{j_1,w_1} \Big)
  \cdots \Big( \sum_{j_n=1}^n b_{u_1,j_n} c_{j_n,w_1} \Big).
\end{aligned}
\end{equation*}
In all but $n!$ of the $n^n$ resulting terms,
repeated indices among $j_1,\dotsc, j_n$ lead to
repeated indeterminates or matrix entries equal to $0$, which cause
the coefficient of $z_{G \circ H}$ to be $0$.
Thus we may consider only
the $n!$ terms in which $j_1, \dotsc, j_n$ are all distinct,
and the expression reduces to
\begin{equation}\label{eq:zgh}
  [z_{G \circ H}] \sum_{v \in \sn} b_{u_1,v_1}c_{v_1,w_1} \cdots b_{u_n,v_n}c_{v_n,w_n}.
\end{equation}
Now observe that $b_{u_i,v_i}$ commutes with $c_{v_j,w_j}$ for all $i,j$,
since our indexing of the expressions $s_{i_1} \cdots s_{i_k}$,
$s_{i_{k+1}} \cdots s_{i_m}$ guarantees all edge weights 
  $\{ z_{i_j, j,1}, z_{i_j,j,2}, z_{i_j+1,j,1}, z_{i_j+1,j,2} \,|\, 1 \leq j \leq k\}$
of $G$ to
commute with all edge weights 
  $\{ z_{i_j, j,1}, z_{i_j,j,2}, z_{i_j+1,j,1}, z_{i_j+1,j,2} \,|\, k+1 \leq j \leq m\}$
of $H$.
Thus (\ref{eq:zgh}) is equal to
\begin{equation*}
  \begin{aligned}
    \sum_{v \in \sn} [z_{G \circ H}] b_{u_1,v_1} \cdots b_{u_n,v_n} c_{v_1,w_1} \cdots c_{v_n,w_n} 
    &= \sum_{v \in \sn} [z_G] b_{u_1,v_1} \cdots b_{u_n,v_n} [z_H] c_{v_1,w_1} \cdots c_{v_n,w_n} \\
    &= \sum_{v \in \sn} \sigma_B(x^{u,v}) \sigma_C(x^{v,w}).
  \end{aligned}
\end{equation*}
\end{proof}

The special case of Proposition~\ref{p:sigmacoprod} in which we have
$m = k+1$ (so that $H$ is the wiring diagram of the single
generator $s_{i_{k + 1}}$) leads to a simple formula for
$\sigma_{BC}(x^{u,w})$
in terms of
the matrix $B$.
\begin{cor}\label{c:sigmacoprod}
  Let wiring diagrams $G$, $H$
  of expressions $s_{i_1}\cdots s_{i_k}$, $s_{i_{k+1}}$
  have weighted path matrices $B$, $C$, respectively.
  Then for all $w \in \sn$ we have
  \begin{equation*}
    \sigma_{BC}(x^{u,w}) =
    \qp12 \sigma_B(x^{u,ws_{i_{k+1}}}) +
    \begin{cases}
      q \sigma_B(x^{u,w}) &\text{if $ws_{i_{k+1}} < w$},\\
      \sigma_B(x^{u,w}) &\text{if $ws_{i_{k+1}} > w$}.
    \end{cases}  
  \end{equation*}
\end{cor}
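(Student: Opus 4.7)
\begin{proofidea}
The plan is to derive this corollary as a direct specialization of Proposition~\ref{p:sigmacoprod} combined with the explicit evaluation provided by Corollary~\ref{c:sigmareduced}.

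First, I would apply Proposition~\ref{p:sigmacoprod} with the given $G$ and $H$ to obtain
\begin{equation*}
  \sigma_{BC}(x^{u,w}) = \sum_{v \in \sn} \sigma_B(x^{u,v})\,\sigma_C(x^{v,w}).
\end{equation*}
The factor $\sigma_C(x^{v,w})$ is exactly what Corollary~\ref{c:sigmareduced} computes for the single-generator wiring diagram with $j = i_{k+1}$: it vanishes unless $v \in \{w,\, ws_{i_{k+1}}\}$, equals $\qp12$ when $v = ws_{i_{k+1}}$, and equals $q$ or $1$ when $v=w$ depending on whether $ws_{i_{k+1}} < w$ or $ws_{i_{k+1}} > w$.

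I would then substitute these values into the sum. Only the $v = w$ and $v = ws_{i_{k+1}}$ terms survive, giving
\begin{equation*}
  \sigma_{BC}(x^{u,w}) = \qp12\,\sigma_B(x^{u,ws_{i_{k+1}}}) + \sigma_C(x^{w,w})\,\sigma_B(x^{u,w}),
\end{equation*}
and the second factor is $q$ or $1$ according to the descent condition, which is exactly the claimed formula.

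There is essentially no obstacle here: both Proposition~\ref{p:sigmacoprod} and Corollary~\ref{c:sigmareduced} have already done the work, and the corollary is just their composition. The only thing to check carefully is that the convention on which side $s_{i_{k+1}}$ acts matches between the two results, so that the surviving indices in the sum are indeed $v = w$ and $v = ws_{i_{k+1}}$ rather than, say, $s_{i_{k+1}}w$; this is a consistency check on notation rather than a genuine difficulty.
\end{proofidea}
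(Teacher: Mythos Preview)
Your proposal is correct and follows essentially the same approach as the paper: apply Proposition~\ref{p:sigmacoprod}, then use Corollary~\ref{c:sigmareduced} to see that only the terms $v=w$ and $v=ws_{i_{k+1}}$ survive, and read off their values. The paper's proof is identical in structure, and your side-check on which side $s_{i_{k+1}}$ acts is exactly the right thing to verify.
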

\begin{proof}
  By Corollary~\ref{c:sigmareduced}, the expansion in 
  Proposition~\ref{p:sigmacoprod} has only two nonzero terms:
\begin{equation*}
\sigma_{BC}(x^{u,w}) 
= \sigma_B(x^{u,ws_{i_{k+1}}}) \sigma_C(x^{ws_{i_{k+1}},w}) 
+ \sigma_B(x^{u,w}) \sigma_C(x^{w,w}).
\end{equation*}
Evaluating $\sigma_C$ as in Corollary~\ref{c:sigmareduced}
we then have the desired expression.
\end{proof}
  
Another important property of the map $\sigma_B$ is that its
evaluation
at natural basis elements of $\Ann$ is closely related to
coefficients in the natural expansion of 
$(1 + T_{s_{i_1}}) \cdots (1 + T_{s_{i_m}})$ in $\hnq$.  
\begin{prop}\label{p:qewcoeff}
Let $G$ be the wiring diagram in (\ref{eq:wdnetwork}) 
with weighted path matrix $B$, and fix $w \in \sn$.
Then 
$\sigma_B(x^{e,w})$ is equal to $\qew$ times the coefficient of $T_w$ in 
$(1 + T_{s_{i_1}}) \cdots (1 + T_{s_{i_m}})$.
\end{prop}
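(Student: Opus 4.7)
The plan is to prove the identity by induction on $m$, using Corollary~\ref{c:sigmacoprod} as the inductive engine and the quadratic Hecke relation to track the Hecke-algebra coefficients.

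For the base case $m=0$, the diagram $G$ is trivial, $B$ is the identity matrix, and the product $(1+T_{s_{i_1}})\cdots(1+T_{s_{i_m}})$ collapses to $T_e$; both sides of the claimed identity reduce to $\delta_{w,e}$.

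For the inductive step, I would factor $G = G' \circ H$, where $G'$ is the wiring diagram of $s_{i_1}\cdots s_{i_{m-1}}$ with weighted path matrix $B'$ and $H$ is the diagram of the single generator $s_{i_m}$, so that $B = B'C$ for $C$ the weighted path matrix of $H$. Let $d_w$ and $d'_w$ denote the coefficients of $T_w$ in the length-$m$ and length-$(m-1)$ products, respectively. Expanding $\bigl(\sum_v d'_v T_v\bigr)(1+T_{s_{i_m}})$ using the quadratic relation from (\ref{eq:hnqdef}) yields the standard two-case recursion
\[
d_w =
\begin{cases}
d'_w + q\, d'_{ws_{i_m}} & \text{if } ws_{i_m} > w,\\
q\, d'_w + d'_{ws_{i_m}} & \text{if } ws_{i_m} < w.
\end{cases}
\]
Applying Corollary~\ref{c:sigmacoprod} to $B = B'C$ produces a parallel two-case formula for $\sigma_B(x^{e,w})$ in terms of $\sigma_{B'}(x^{e,w})$ and $\sigma_{B'}(x^{e,ws_{i_m}})$, with coefficients $\qp12$ and either $1$ or $q$. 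Substituting the inductive hypothesis $\sigma_{B'}(x^{e,v}) = q^{\ell(v)/2}d'_v$ and using $\ell(ws_{i_m}) = \ell(w) \pm 1$ to extract a common factor of $\qew$, the two cases reduce exactly to $\qew$ times the recursion for $d_w$.

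No serious obstacle arises: the substantive work has already been packaged into Corollary~\ref{c:sigmacoprod}, and the present proposition amounts to matching the half-integer $q$-exponents produced by $\sigma_B$ against the integer exponents produced by Hecke multiplication. The only delicate bookkeeping is the sign of $\ell(ws_{i_m}) - \ell(w)$, which is precisely what determines the branching in both recursions, so the match is automatic once both are written down.
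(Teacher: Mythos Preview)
Your proposal is correct and follows essentially the same approach as the paper: induction on $m$, with Corollary~\ref{c:sigmacoprod} driving the $\sigma_B$ recursion and the Hecke quadratic relation driving the coefficient recursion, followed by a case match on the sign of $\ell(ws_{i_m})-\ell(w)$. The only cosmetic difference is that the paper takes $m=1$ as its base case (invoking Corollary~\ref{c:sigmareduced}), whereas you start at $m=0$; both are fine.
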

\begin{proof}
  Consider the wiring diagram $G = G_{[j,j+1]}$ of the simple transposition $s_j$
  and its weighted path matrix $B$.
  By Corollary~\ref{c:sigmareduced} we have
  \begin{equation*}
    \sigma_B(x^{e,w}) = \begin{cases}
      \qp12 
      &\text{if $w = s_j$},\\
      1 
      &\text{if $w = e$},\\
      0 
      &\text{otherwise}.
    \end{cases}
    \end{equation*}
  Thus the result is true for any simple transposition 
  wiring diagram
  in (\ref{eq:elenets}).

  Now assume that the result holds for concatenations 
  of $1,\dotsc, m-1$ simple transposition diagrams,
  and define $\{ a_w \,|\, w \in \sn \} \subset \mathbb Z[q]$
  by
  \begin{equation*}
    (1 + T_{s_{i_1}}) \cdots (1 + T_{s_{i_{m-1}}}) = \sum_{w \in \sn} a_w T_w.
    \end{equation*}
Consider the wiring diagram
$G = G_{[i_1,i_1+1]} \circ \cdots \circ G_{[i_m,i_m+1]}$
and decompose $G$ as $G' \circ H$, where
$G' = G_{[i_1,i_1+1]} \circ \cdots \circ G_{[i_{m-1},i_{m-1}+1]}$
has weighted path matrix $B'$ and
$H = G_{[i_m,i_m+1]}$ has 
weighted path matrix $C$.
By Corollary~\ref{c:sigmacoprod}
we have
\begin{equation*}
\begin{aligned} 
\sigma_B(x^{e,w}) = \sigma_{B'C}(x^{e,w}) 
&= \qp12 \sigma_{B'}(x^{e,ws_{i_m}})
+ \begin{cases}
q \sigma_{B'}(x^{e,w}) &\text{if $ws_{i_m} < w$},\\
\sigma_{B'}(x^{e,w}) &\text{if $ws_{i_m} > w$}.
\end{cases}\\
\end{aligned}
\end{equation*}
By induction, this is
\begin{equation*}
    \qp12 q_{ws_{i_m}} a_{ws_{i_m}} 
    + \begin{cases}
q \qew a_w &\text{if $ws_{i_m} < w$},\\
\qew a_w &\text{if $ws_{i_m} > w$}
\end{cases}
= 
\begin{cases}
\qew (a_{ws_{i_m}} + q a_w) &\text{if $ws_{i_m} < w$},\\
\qew (q a_{ws_{i_m}} + a_w) &\text{if $ws_{i_m} > w$}.
\end{cases}
\end{equation*}
On the other hand,
consider the element
\begin{equation}\label{eq:element}
  (1 + T_{s_{i_1}}) \cdots (1 + T_{s_{i_m}})
  = \Big( \sum_{v \in \sn} a_v T_v \Big) (1 + T_{s_{i_m}}).  
\end{equation}
By (\ref{eq:hnqdef}) we have
\begin{equation*}
  T_w T_{s_{i_m}} = \begin{cases}
    (q-1)T_w + qT_{ws_{i_m}} &\text{if $ws_{i_m} < w$},\\
    T_{ws_{i_m}} &\text{if $ws_{i_m} > w$}.
  \end{cases}
  \end{equation*}
Thus $\qew$ times the coefficient of $T_w$ in (\ref{eq:element}) is  
\begin{equation*}
\qew \Bigg( \ntnsp a_w + \begin{cases} 
a_{ws_{i_m}} + (q-1)a_w &\text{if $ws_{i_m} < w$,}\\
qa_{ws_{i_m}} &\text{if $ws_{i_m} > w$}
\end{cases} \ntksp \Bigg)
= \begin{cases} 
\qew(a_{ws_{i_m}} + q a_w) &\text{if $ws_{i_m} < w$},\\
\qew(qa_{ws_{i_m}} + a_w) &\text{if $ws_{i_m} > w$}.
\end{cases}
\end{equation*}
\end{proof}

As a consequence of Proposition~\ref{p:qewcoeff},
we have a $q$-analog of Proposition~\ref{p:stem}.
The evaluation
\begin{equation*}
  B \mapsto \imm{\theta}(B)
\end{equation*}
of $\imm{\theta}(x) \in \zxn$ at the $n \times n$ matrix $B$ is now replaced by
the map
\begin{equation*}
  B \mapsto \sigma_B(\imm{\theta_q}(x))
\end{equation*}
for $\imm{\theta_q}(x) \in \A$.

\begin{thm}\label{t:immeval}
  Let $\theta_q: \hnq \rightarrow \zqq$ be linear, and let wiring diagram
  $G$ of $\sprod im$ have weighted path matrix $B$.
Then we have
\begin{equation}\label{eq:immeval}
\theta_q((1 + T_{s_{i_1}}) \cdots (1 + T_{s_{i_m}})) = 
\sigma_B(\imm{\theta_q}(x)).
\end{equation}
\end{thm}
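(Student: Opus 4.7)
The plan is to reduce the theorem to a direct application of Proposition~\ref{p:qewcoeff}, since all of the serious work has already been done. First I would expand the right-hand side using the definition of $\imm{\theta_q}(x)$ from (\ref{eq:qimm}) and the fact that the monomial $\permmon xw$ is exactly $x^{e,w}$ (because the identity permutation $e$ sends $i$ to $i$). Using $\zqq$-linearity of the map $\sigma_B$, this yields
\begin{equation*}
  \sigma_B(\imm{\theta_q}(x)) = \sum_{w \in \sn} \theta_q(T_w)\, \qiew\, \sigma_B(x^{e,w}).
\end{equation*}

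Next I would write $(1 + T_{s_{i_1}}) \cdots (1 + T_{s_{i_m}}) = \sum_{w \in \sn} c_w T_w$ and invoke Proposition~\ref{p:qewcoeff}, which states precisely that $\sigma_B(x^{e,w}) = \qew c_w$. Substituting, the factors $\qiew$ and $\qew$ cancel, leaving
\begin{equation*}
  \sigma_B(\imm{\theta_q}(x)) = \sum_{w \in \sn} c_w\, \theta_q(T_w).
\end{equation*}

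Finally, $\zqq$-linearity of $\theta_q$ converts this sum to $\theta_q\bigl(\sum_w c_w T_w\bigr) = \theta_q\bigl((1 + T_{s_{i_1}}) \cdots (1 + T_{s_{i_m}})\bigr)$, which is exactly the left-hand side of (\ref{eq:immeval}).

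There is no real obstacle at this stage: once Proposition~\ref{p:qewcoeff} is in hand, the theorem is essentially a bookkeeping consequence of linearity and cancellation of the factors $\qew$ built into both the definition of $\imm{\theta_q}(x)$ and the conclusion of Proposition~\ref{p:qewcoeff}. The genuine content lies upstream: the compatibility of $\sigma_B$ with concatenation (Proposition~\ref{p:sigmacoprod} and Corollary~\ref{c:sigmacoprod}), the single-generator computation in Corollary~\ref{c:sigmareduced}, and the inductive argument in Proposition~\ref{p:qewcoeff} matching the $\hnq$-multiplication rule $T_w T_{s_{i_m}}$ to the recursion satisfied by $\sigma_B(x^{e,w})$ as an additional simple-transposition wiring diagram is appended.
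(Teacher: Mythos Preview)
Your proof is correct and essentially identical to the paper's own argument: expand $\imm{\theta_q}(x)$ using its definition, apply linearity of $\sigma_B$, invoke Proposition~\ref{p:qewcoeff} to replace $\sigma_B(x^{e,w})$ by $\qew$ times the coefficient of $T_w$, cancel, and use linearity of $\theta_q$. Your closing remark that the real content lies in Proposition~\ref{p:qewcoeff} and its prerequisites is also accurate.
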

\begin{proof}
Write $(1 + T_{s_{i_1}}) \cdots (1 + T_{s_{i_m}}) = \sum_{v \in \sn} a_v T_v$.
Then the right-hand side of
(\ref{eq:immeval}) is
\begin{equation*}
\sigma_B \Big( \sum_{v \in \sn} \theta_q(T_v) \qiev x^{e,v} \Big) 
= \sum_{v \in \sn} \theta_q(T_v) \qiev \sigma_B(x^{e,v})
= \sum_{v \in \sn} \theta_q(T_v) \qiev \qev a_v
= \theta_q \Big( \sum_{v \in \sn} a_vT_v \Big),
\end{equation*}
where the second equality follows from Proposition~\ref{p:qewcoeff}.
But this is precisely the left-hand side of
(\ref{eq:immeval}).
\end{proof}

Now observe that if one fixes a reduced expression $\sprod im$ for
each $w \in \sn$ and uses each such expression to define an element
\begin{equation*}
  D_w \defeq (1 + T_{s_{i_1}}) \cdots (1 + T_{s_{i_m}}) \in \hnq,
\end{equation*}
then the set $\{ D_w \,|\, w \in \sn \}$ forms a basis of
$\hnq$: we have
$D_w \in T_w + \spn_{\mathbb Z[q]} \{ T_v \,|\, v < w \}$.
(See also \cite[Cor.\,3.6]{Deodhar90}.)  Thus we can evaluate
$\theta_q(g)$ for every $g \in \hnq$, provided that we can expand
$g$ in this basis.

\section{$G$-tableaux and the combinatorics of the evaluation map}\label{s:tabx}
Theorem~\ref{t:immeval} provides half of the solution
to the problem of evaluating
$\epsilon_q^\lambda(D_w)$.
The other half is a combinatorial interpretation of the
right-hand-side of (\ref{eq:immeval}), which is a linear
combination of expressions of the form $\sigma_B(x^{u,w}) \in \zqq$.
To combinatorially interpret such evaluations,
we will arrange the paths of a path family $\pi$ covering a wiring diagram $G$
into a (French) Young diagram.
We will call the resulting structure a {\em $G$-tableau},
or more specifically a {\em $\pi$-tableau}.
If $\type(\pi) = w$, we will say also that the tableau has type $w$.
For example,
the following path family $\pi$
covering the wiring diagram of (\ref{eq:wdnetwork}) yields
six $\pi$-tableaux of shape $21$ and type $213$:
\begin{equation}\label{eq:1stGtableaux}
\begin{tikzpicture}[scale=.6,baseline=-15]
  \draw[dotted, thick] (0,0) -- (1,0) -- (1.5,-.5) -- (2,0) -- (3,0);
  \draw[dashed] (0,-1) -- (.5,-1.5) -- (1.5,-.5) -- (2.5,-1.5) -- (3,-2);
  \draw[-,thick] (0,-2) -- (.5,-1.5) -- (1,-2) -- (2,-2) -- (2.5,-1.5) -- (3,-1);
  \node at (-.5,0) {$\pi_3$};
  \node at (-.5,-1) {$\pi_2$};
  \node at (-.5,-2) {$\pi_1$};
\end{tikzpicture} 
\;, \qquad
\tableau[scY]{\pi_3 | \pi_1,\pi_2} \;, \quad
\tableau[scY]{\pi_2 | \pi_1,\pi_3} \;, \quad
\tableau[scY]{\pi_3 | \pi_2,\pi_1} \;, \quad
\tableau[scY]{\pi_1 | \pi_2,\pi_3} \;, \quad
\tableau[scY]{\pi_2 | \pi_3,\pi_1} \;, \quad
\tableau[scY]{\pi_1 | \pi_3,\pi_2} \;.
\end{equation}
Given a $\pi$-tableau $U$, we define (integer) Young tableaux $L(U)$, $R(U)$
by replacing each path by its source index and sink index, respectively.
For example, if $U$ is the first $\pi$-tableau in (\ref{eq:1stGtableaux}),
then we have
\begin{equation*}
  L(U) = \tableau[scY]{3 | 1,2} \;, \quad
  R(U) = \tableau[scY]{3 | 2,1} \;.
\end{equation*}
It is easy to see that given two Young tableaux $P$, $Q$ of the same shape,
there is at most one $\pi$-tableau $U$ satisfying $L(U) = P$, $R(U) = Q$.

We will also define several statistics on $G$-tableaux.
Let $U$ be a $\pi$-tableau of any shape $\lambda \vdash n$.
Define $\incross(U)$, the number of {\em inverted noncrossings} of $U$,
to be the number of noncrossings $j$ of $\pi$ such that $\pi_a$, $\pi_b$
intersect at the central vertex of $G_{[i_j,i_j+1]}$ (\ref{eq:wdnetwork})
with $\pi_b$ above $\pi_a$,
\begin{equation}\label{eq:inc}
\begin{tikzpicture}[scale=.7,baseline=-25]
  \draw[dashed] (0,0) -- (1,-1) -- (2,0);
  \draw[-] (0,-2) -- (1,-1) -- (2,-2);
  \node at (-.5,0) {$\pi_b$};
  \node at (-.5,-2) {$\pi_a$};
\end{tikzpicture}\;,
 \end{equation}
and $\pi_b$ appears in an earlier column of $U$ than $\pi_a$
(whether or not $b > a$).  Thus inverted noncrossings may be proper or
defective.   
Define $\cross(U) = \cross(\pi)$ to be the number of crossings of $\pi$, i.e.,
the number of occurrences of
\begin{equation}\label{eq:cross}
\begin{tikzpicture}[scale=.7,baseline=-25]
  \draw[dashed] (0,0) -- (1,-1) -- (2,-2);
  \draw[-] (0,-2) -- (1,-1) -- (2,0);
  \node at (-.5,0) {$\pi_b$};
  \node at (-.5,-2) {$\pi_a$};
\end{tikzpicture}
\qquad
\mathrm{ or }
\qquad 
\begin{tikzpicture}[scale=.7,baseline=-25]
  \draw[-] (0,0) -- (1,-1) -- (2,-2);
  \draw[dashed] (0,-2) -- (1,-1) -- (2,0);
  \node at (-.5,0) {$\pi_a$};
  \node at (-.5,-2) {$\pi_b$};
\end{tikzpicture} \;.
\end{equation}
This depends only upon $\pi$; not upon the 
locations of $\pi_a$ and $\pi_b$ in $U$.
For example, each tableau $U$ in (\ref{eq:1stGtableaux})
satisfies $\cross(U) = 1$ because $\cross(\pi) = 1$.
The inverted noncrossings in these tableaux are appearances of $\pi_3$
in an earlier column than $\pi_2$, or $\pi_2$ in an earlier column than $\pi_1$.
The numbers of these for the six tableaux are $1, 0, 0, 0, 1, 1$, respectively.

Combining the above tableau statistics, we have a combinatorial interpretation
of $\sigma_B(x^{u,w})$.
\begin{prop}\label{p:sigmastats}
  Let wiring diagram $G$ have weighted path matrix $B$.
For $u,w \in \sn$ we have 
\begin{equation}\label{eq:stats}
\sigma_B (x^{u,w}) = 
\sum_\pi \qp{\cross(\pi)}2
q^{\incross(U)},
\end{equation}
where the sum is over path families $\pi$ of type $u^{-1}w$ covering $G$,
and $U = U(\pi,u,w)$ is the unique $\pi$-tableaux of shape $(n)$
satisfying $L(U) = u_1 \cdots u_n$, $R(U) = w_1 \cdots w_n$.
\end{prop}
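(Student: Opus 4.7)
My plan is to induct on $m$, the length of the expression $\sprod im$ whose wiring diagram is $G$. For the base case $m = 0$, $G$ is trivial and $B$ is the identity, so $\sigma_I(x^{e,v}) = \delta_{e,v}$; expanding $x^{u,w}$ in the natural basis via Proposition~\ref{p:ruvtw} then yields $\sigma_I(x^{u,w}) = r_{u,w,e,e}(\qdiff) = \delta_{u,w}$. This matches the right-hand side of (\ref{eq:stats}), since the unique covering of the trivial $G$ is the identity path family, of type $e$ with no crossings and no inverted noncrossings, contributing $1$ exactly when $u^{-1}w = e$.

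For the inductive step, I would decompose $G = G' \circ H$, where $G'$ is the wiring diagram of $s_{i_1} \cdots s_{i_{m-1}}$ with weighted path matrix $B'$ and $H = G_{[i_m, i_m+1]}$. Corollary~\ref{c:sigmacoprod} gives
\begin{equation*}
\sigma_B(x^{u,w}) = \qp12\,\sigma_{B'}(x^{u,ws_{i_m}}) + \begin{cases} q\,\sigma_{B'}(x^{u,w}) &\text{if } ws_{i_m} < w, \\ \sigma_{B'}(x^{u,w}) &\text{if } ws_{i_m} > w. \end{cases}
\end{equation*}
Any covering path family $\pi$ on $G$ of type $u^{-1}w$ decomposes uniquely as $\pi = \pi' \circ \pi''$ with $\pi'$ covering $G'$ and $\pi''$ covering $H$; since $\type(\pi) = \type(\pi')\type(\pi'')$ and $\type(\pi'') \in \{e, s_{i_m}\}$, the crossing and noncrossing cases for $\pi''$ correspond respectively to $\type(\pi') = u^{-1}ws_{i_m}$ and $\type(\pi') = u^{-1}w$, matching the two $\sigma_{B'}$-summands above. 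Applying the inductive hypothesis to each expresses it as a sum of $\qp{\cross(\pi')}2 q^{\incross(U')}$ over the appropriate $\pi'$.

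It then remains to track the change in statistics at position $m$. Because the path in each cell of $U = U(\pi, u, w)$ is simply the continuation through $H$ of the path in the corresponding cell of $U' = U(\pi', u, v)$ (with $v$ the common middle type), the two tableaux share the same cell structure, so inverted noncrossings at positions $j < m$ carry over from $U'$ to $U$ identically. At position $m$, a crossing adds $1$ to $\cross(\pi)$ (accounting for the $\qp12$ prefactor in the first summand), while a noncrossing adds $1$ to $\incross(U)$ exactly when its upper path (entering $H$ at middle $i_m+1$ and, in the noncrossing case, ending at sink $i_m+1$, thus placed in cell $w^{-1}(i_m+1)$ of $U$) sits in an earlier column than the lower path (in cell $w^{-1}(i_m)$), i.e., when $w^{-1}(i_m+1) < w^{-1}(i_m)$.

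The main obstacle is verifying that this inversion condition is equivalent to $ws_{i_m} < w$, the condition controlling the second summand of Corollary~\ref{c:sigmacoprod}. Under the multiplication convention of the excerpt, $ws_{i_m}$ is the permutation obtained from $w$ by swapping the values $i_m$ and $i_m+1$ wherever they occur in the one-line notation, so $\ell(ws_{i_m}) < \ell(w)$ if and only if $i_m+1$ appears before $i_m$ in $w_1 \cdots w_n$, which is exactly $w^{-1}(i_m+1) < w^{-1}(i_m)$. Combining, the extra factor at position $m$ in each branch reproduces the corresponding prefactor of Corollary~\ref{c:sigmacoprod}, completing the induction.
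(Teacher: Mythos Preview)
Your proof is correct and follows essentially the same route as the paper: induction on $m$ via the decomposition $G = G' \circ H$ and Corollary~\ref{c:sigmacoprod}, tracking how $\cross$ and $\incross$ change at the final position and matching the case split there to the condition $ws_{i_m} < w$. The only cosmetic differences are that you take the cleaner base case $m=0$ (the paper instead disposes of all $m \le \ell(u^{-1}w)$ by combining Proposition~\ref{p:sigmareduced} with Proposition~\ref{p:ruvtw}), and at the last vertex you locate the two intersecting paths by their sink indices via $R(U)=w$ rather than by their source indices via $L(U)=u$ as the paper does; since the cell of a path in $U$ is determined equally well by either, both computations yield $w^{-1}(i_m+1) < w^{-1}(i_m) \Leftrightarrow ws_{i_m} < w$.
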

\begin{proof}
  Let $G = G_{[i_1,i_1+1]} \circ \cdots \circ G_{[i_m,i_m+1]}$.
  If $\sprod im$ contains no reduced subexpression for $u^{-1}w$,
  then there is no path family of type $u^{-1}w$ which covers $G$,
  and the right-hand side of (\ref{eq:stats}) is $0$.
  By Proposition~\ref{p:sigmareduced}, the left-hand side is $0$ as well.
  Now suppose $m = \ell(u^{-1}w)$ and
  let $\sprod im$ be a reduced expression for $u^{-1}w$.
  Then there is exactly one path family of type $u^{-1}w$ that covers $G$.
  It has $\ell(u^{-1}w)$ crossings and no noncrossings.
  Thus the right-hand side of (\ref{eq:stats}) is
  $q_{u^{-1}w}$.
  By Proposition~\ref{p:ruvtw}
  and Proposition~\ref{p:sigmareduced},
  the left-hand side is the same.
  Thus the claim is true for wiring diagrams $G$ which are concatenations of
  $0,\dotsc,\ell(u^{-1}w)$ simple transposition diagrams.
    
  Now suppose the claim is true for $G$ a concatenation of
  $m \geq \ell(u^{-1}w)$ simple transposition diagrams
  and consider $G = G' \circ H$ where
  \begin{equation*}
    G' = G_{[i_1,i_1+1]} \circ \cdots \circ G_{[i_m,i_m+1]},
    \qquad H = G_{[i_{m+1},i_{m+1}+1]}.
    \end{equation*}
    Let $B'$, $C$ be the path matrices of $G'$, $H$, respectively
    so that $B = B'C$ is the path matrix of $G$.
    Then by Corollary~\ref{c:sigmacoprod} we have
  \begin{equation}\label{eq:sigmatwosum}
    \sigma_B(x^{u,v})
    =
    \qp12 \sigma_{B'}(x^{u,ws_{i_{m+1}}}) + \begin{cases}
      q \sigma_{B'}(x^{u,w}) &\text{if $ws_{i_{m+1}} < w$},\\
      \sigma_{B'}(x^{u,w}) &\text{if $ws_{i_{m+1}} > w$}.
      \end{cases}
  \end{equation}
  By induction we may interpret $\sigma_{B'}(x^{u,ws_{i_{m+1}}})$
  and $\sigma_{B'}(x^{u,w})$, respectively, as
  \begin{equation}\label{eq:sigmainterp}
    \sum_{\pi^{B'}} \qp{\cross(\pi^{B'})}2 q^{\incross(U(\pi^{B'}\ntksp,u,ws_{i_{m+1}}))},\qquad
    \sum_{\pi^{B'}} \qp{\cross(\pi^{B'})}2 q^{\incross(U(\pi^{B'}\ntksp,u,w))},
  \end{equation}
  where the sums are over path families of type $u^{-1}ws_{i_{m+1}}$ and $u^{-1}w$,
  respectively, which cover the wiring diagram $G'$.
  Substituting these two expressions into
  the right-hand side of (\ref{eq:sigmatwosum}), we obtain
  \begin{equation}\label{eq:sigmalong}
    \sum_{\pi^{B'}} \qp{\cross(\pi^{B'})+1}2 q^{\incross(U(\pi^{B'}\ntksp,u,ws_{i_{m+1}}))} +
    \begin{cases}
      \displaystyle{\sum_{\pi^{B'}}} \qp{\cross(\pi^{B'})+2}2 q^{\incross(U(\pi^{B'}\ntksp,u,w))} &\text{if $ws_{i_{m+1}} < w$,}\\
      \displaystyle{\sum_{\pi^{B'}}} \qp{\cross(\pi^{B'})}2 q^{\incross(U(\pi^{B'}\ntksp,u,w))} &\text{if $ws_{i_{m+1}} > w$,}\\
    \end{cases}
  \end{equation}
where the sums are as in (\ref{eq:sigmainterp}).
  
Concatenating a path family $\pi^{B'}$
which covers $G'$
to a path family $\pi^C$
which covers $H$
we obtain a new path family $\pi^{{B'}C}$
which covers $G$
and satisfies
$\type(\pi^{{B'}C}) = \type(\pi^{B'}) \type(\pi^C)$.
Conversely, every path family which covers $G$
decomposes this way.
If
$\type(\pi^{B'}) = u^{-1}ws_{i_{m+1}}$ and
$\type(\pi^C) = s_{i_{m+1}}$ then we have
 \begin{equation}\label{eq:sigmasum}
       \incross(U(\pi^{{B'}C}\ntnsp,u,w)) = \incross(U(\pi^{B'}\ntnsp,u,ws_{i_{m+1}})), \qquad
       \cross(\pi^{{B'}C}) = \cross(\pi^{B'}) + 1.
 \end{equation}
 Otherwise, if $\type(\pi^{B'}) = u^{-1}w$ and $\type(\pi^C) = e$,
 then let $j$ and $k$ be the source indices of
  the paths in $\pi^{{B'}}$ which terminate at sinks $i_{m+1}$ and $i_{m+1}+1$,
  respectively. Then we have
  \begin{equation}\label{eq:precedence}
    \begin{gathered}
    \incross(U(\pi^{{B'}C}\ntnsp,u,w)) = \incross(U(\pi^{B'}\ntnsp,u,w)) +
    \begin{cases}
      1 &\text{if $k$ precedes $j$ in $u$,
     i.e., $u^{-1}_k < u^{-1}_j$},\\
      0 &\text{otherwise},
    \end{cases}\\
    \cross(\pi^{{B'}C}) = \cross(\pi^{B'}). 
    \end{gathered}
\end{equation}
  Since $\type(\pi^{B'}) = u^{-1}w$, the index $k$ is given by
  $k = (u^{-1}w)^{-1}_{i_{m+1}+1} = (w^{-1}u)_{i_{m+1}+1}$.
  Thus the first condition in (\ref{eq:precedence}) is equivalent to
  \begin{equation*}
    (u^{-1})_{(w^{-1}u)_{i_{m+1}+1}} < (u^{-1})_{(w^{-1}u)_{i_{m+1}}}.
  \end{equation*}
  Simplifying the two expressions in this inequality to
$(w^{-1}uu^{-1})_{i_{m+1}+1} = (w^{-1})_{i_{m+1}+1}$ and
$(w^{-1}uu^{-1})_{i_{m+1}} = (w^{-1})_{i_{m+1}}$, respectively,
we obtain the equivalent inequality 
$ws_{i_{m+1}} < w$.

It follows that the expression in (\ref{eq:sigmalong})
and therefore the right-hand side of (\ref{eq:sigmatwosum}) can be written as
\begin{equation}\label{eq:stats2}
\sum_{\pi^{{B'}C}} \qp{\cross(\pi^{{B'}C})}2
q^{\incross(U(\pi^{{B'}C}\ntnsp,u,w)},
\end{equation}
where the sum is over path families of type $u^{-1}w$ which cover $G$.
Thus the claim is true by induction.
\end{proof}

The special case $u = e$ of Proposition~\ref{p:sigmastats} yields another proof
of the formula (\ref{eq:defectdef2}).
\begin{cor}\label{c:sigmastats}
  The coefficients in the expansion
    $(1 + T_{s_{i_1}}) \cdots (1 + T_{s_{i_m}}) = \sum_w a_w T_w$
are given by
\begin{equation*}
  a_w = \sum_\pi q^{\dfct(\pi)},
\end{equation*}
where the sum is over all path families of type $w$ which cover the wiring
diagram of $\sprod im$.
\end{cor}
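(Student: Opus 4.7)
The plan is to derive the corollary by comparing two earlier evaluations of $\sigma_B(x^{e,w})$. From Proposition~\ref{p:qewcoeff} we have $\sigma_B(x^{e,w}) = \qew a_w$, while the $u = e$ case of Proposition~\ref{p:sigmastats} gives
\begin{equation*}
\sigma_B(x^{e,w}) = \sum_{\pi} \qp{\cross(\pi)}{2}\, q^{\incross(U(\pi,e,w))},
\end{equation*}
where the sum is over path families of type $w$ covering $G$ and $U(\pi,e,w)$ is the unique $\pi$-tableau of shape $(n)$ with $L(U) = 1\,2\cdots n$ and $R(U) = w_1\cdots w_n$. Dividing by $\qew$, it suffices to establish the pointwise identity
\begin{equation*}
\qp{\cross(\pi)}{2}\, q^{\incross(U(\pi,e,w))} = \qew\, q^{\dfct(\pi)}
\end{equation*}
for every path family $\pi$ of type $w$ covering $G$.

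I would split this into two combinatorial claims. \emph{First}, because $u = e$ the path $\pi_k$ occupies column $k$ of $U(\pi,e,w)$, so an inverted noncrossing of $U$ is precisely a noncrossing at which the path entering from above has smaller source index. At any central vertex, the two paths meeting there are in reverse source-index order if and only if they have crossed an odd number of times before that vertex, and this is exactly the condition for that noncrossing to be defective. Hence $\incross(U(\pi,e,w))$ equals the number of defective noncrossings of $\pi$.

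\emph{Second}, I would verify that $\cross(\pi) = \ell(w) + 2d_c(\pi)$, where $d_c(\pi)$ denotes the number of defective crossings. Fix a pair of paths $\pi_a, \pi_b$ with $a < b$ and let $k_c$ be the number of times they cross as one traverses $G$ from left to right. Among these crossings, the $1$st, $3$rd, $\ldots$ are proper and the $2$nd, $4$th, $\ldots$ are defective, contributing $\lceil k_c/2\rceil$ proper and $\lfloor k_c/2\rfloor$ defective crossings; the difference $k_c \bmod 2$ equals $1$ precisely when the pair ends in swapped order, i.e., when $(a,b)$ is an inversion of $w$. Summing over all pairs yields $\cross(\pi) - 2d_c(\pi) = \inv(w) = \ell(w)$.

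Combining the two claims, and writing $\dfct(\pi) = d_c(\pi) + d_{nc}(\pi)$ where $d_{nc}(\pi) = \incross(U(\pi,e,w))$, we get
\begin{equation*}
\qp{\cross(\pi)}{2}\, q^{\incross(U(\pi,e,w))} = \qp{\ell(w) + 2d_c(\pi)}{2}\, q^{d_{nc}(\pi)} = \qew\, q^{\dfct(\pi)},
\end{equation*}
and summing over $\pi$ gives $\qew a_w = \qew \sum_\pi q^{\dfct(\pi)}$, which is the desired formula. The only subtle step is the identification of inverted noncrossings with defective noncrossings; this relies crucially on $u = e$, which forces the column order of paths in $U$ to coincide with the source-index order.
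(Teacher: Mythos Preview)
Your proof is correct and follows essentially the same approach as the paper's: both set $u=e$ in Proposition~\ref{p:sigmastats}, identify inverted noncrossings of $U(\pi,e,w)$ with defective noncrossings, establish $\cross(\pi)=\ell(w)+2d_c(\pi)$ via a parity count on pairs of paths, and then equate the resulting expression with $\qew a_w$ from Proposition~\ref{p:qewcoeff}. Your justification of the crossing identity (listing the $1$st, $3$rd, \dots\ crossings as proper and the $2$nd, $4$th, \dots\ as defective) is in fact a bit more explicit than the paper's.
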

\begin{proof} 
  Substitute $u = e$ in Proposition~\ref{p:sigmastats}.

  The right-hand side of (\ref{eq:stats}) is a sum over
  path families $\pi$ of type $w$, and
  each tableau $U = U(\pi,e,w)$ is simply
  the sequence $\pi = (\pi_1,\dotsc,\pi_n)$.
  Thus an inverted noncrossing of $U$ is simply a noncrossing (\ref{eq:inc})
  in which the upper path has index less than that of the lower path, i.e.,
  a defective noncrossing.  To relate these to all defects,
  let us temporarily define
  \begin{equation*}
    \begin{aligned}
      \dnc(\pi) &= \text{ number of defective noncrossings of $\pi$},\\
      \dc(\pi) &= \text{ number of defective crossings of $\pi$},
    \end{aligned}
  \end{equation*}
  so that $\dfct(\pi) = \dnc(\pi) + \dc(\pi)$.
  Now observe that for any path familiy of type $w$ we have
    \begin{equation*}
    \cross(\pi) = \inv(w) + 2\dc(\pi)
    \end{equation*}
    because if paths $\pi_a$, $\pi_b$ cross $k$ times,
    then at most one of those crossings contributes to $\inv(w)$,
    while exactly half of the remaining crossings are defective.
    Thus the right-hand side of (\ref{eq:stats}) becomes
    \begin{equation}\label{eq:rhsstats}
      \sum_\pi \qp{\inv(w) + 2\dc(\pi)}2 q^{\dnc(\pi)}
      = \sum_\pi \qew q^{\dc(\pi)+\dnc(\pi)} = \qew \sum_\pi q^{\dfct(\pi)},
    \end{equation}
    where the sum is over all path families of type $w$ which cover the
    wiring diagram of $\sprod im$.

    By Proposition \ref{p:qewcoeff}, the left-hand side of (\ref{eq:stats})
    is $\qew a_w$.
  Combining this with (\ref{eq:rhsstats}), we have the desired result.
\end{proof}
%

\section{Evaluation of induced sign characters}\label{s:indsgneval}

By Theorem~\ref{t:immeval}, the map $\sigma_B$ (\ref{eq:sigmadef})
can be used to evaluate $\epsilon_q^\lambda(D_w)$ when one has a simple
expression for the generating function
$\imm{\epsilon_q^\lambda}(x)$
and can evaluate
$\sigma_B(\imm{\epsilon_q^\lambda}(x))$.
Such an expression was given by Konvalinka
and the third author in \cite[Thm.\,5.4]{KSkanQGJ}:
for $\lambda = (\lambda_1,\dotsc,\lambda_r)$, we have
\begin{equation}\label{eq:immepsilon}
  \imm{\epsilon_q^\lambda}(x) =
  \sum_I \qdet(x_{I_1,I_1}) \cdots \qdet(x_{I_r,I_r}),
\end{equation}
where
$\qdet$ and $x_{L,M}$ are defined as in Section~\ref{s:qmb},
and
the sum is over all ordered set partitions
$I = (I_1,\dotsc,I_r)$ of $[n]$
satisfying $|I_j| = \lambda_j$.
We will say that such an ordered set
partition has {\em type} $\lambda$.

To evaluate $\sigma_B(\imm{\epsilon_q^\lambda}(x))$, we expand
each term
on the right-hand side of (\ref{eq:immepsilon})
in a monomial basis $\{x^{u,v} \,|\, v \in \sn \}$ of $\Ann$,
where $u = u(I)$ is the concatenation of the $r$ strictly increasing subwords
\begin{equation}\label{eq:subwordsofu}
u_1 \cdots u_{\lambda_1}, \quad 
u_{\lambda_1 +1} \cdots u_{\lambda_1 + \lambda_2}, \quad
u_{\lambda_1 + \lambda_2 + 1} \cdots u_{\lambda_1 + \lambda_2 + \lambda_3}, \quad
\dotsc, \quad
u_{n-\lambda_r + 1} \cdots u_n
\end{equation}
formed by listing the elements of each block $I_1, \dotsc, I_r$
in increasing order.
As $I$ varies over all ordered set partitions of $[n]$ of type $\lambda$,
the
permutations $u(I)$ vary over
the Bruhat-minimal representatives $\slambdamin$ of cosets $\slambda u$, where
$\slambda$ is the {\em Young subgroup} of $\sn$
generated by
\begin{equation*}
\{ s_1, \dotsc, s_{n-1} \} \ssm 
\{ s_{\lambda_1}, s_{\lambda_1 + \lambda_2}, 
s_{\lambda_1 + \lambda_2 + \lambda_3}, 
\dotsc, 
s_{n-\lambda_r} \}.
\end{equation*}
Expanding each term on the right-hand side of (\ref{eq:immepsilon})
and applying $\sigma_B$ we have
\begin{equation}\label{eq:lambepsilon}
  \sigma_B(\qdet(x_{I_1,I_1}) \cdots \qdet(x_{I_r,I_r}))
  = \sum_{y \in \slambda}
(-1)^{\ell(y)}\qiey \sigma_B (x^{u(I),yu(I)}).
\end{equation}
To combinatorially interpret the
sum in (\ref{eq:lambepsilon})
we may apply Proposition~\ref{p:sigmastats}
and compute statistics for tableaux belonging to the set
\begin{equation*}
    \mathcal U_I = \mathcal U_I(G)
    \defeq \{ U(\pi, u, yu ) \,|\, \pi \text{ covers } G, u = u(I), 
    \type(\pi) = y \in \slambda \}.
\end{equation*}
Note that our restriction on $y$ forces the sink indices of paths in
components
\begin{equation}\label{eq:component}
  (\lambda_1 + \cdots + \lambda_{k-1}+1), \dotsc, (\lambda_1 + \cdots + \lambda_k)
\end{equation}
of $U(\pi,u,yu)$ to be a permutation of the source indices of the same paths.

On the other hand, the
sum in (\ref{eq:lambepsilon}) has both positive and negative signs.
We will obtain a subtraction-free expression
for the sum
by applying
a sign-reversing involution
to the tableaux in each set $\mathcal U_I$.
It will be convenient to define this involution on a second set
$\mathcal T_I$ of tableaux, in obvious bijection $\mathcal U_I$.
For a wiring diagram $G$
let $\mathcal T_I = \mathcal T_I(G)$ be 
the set of all column-closed, left column-strict $G$-tableaux 
$W$ of shape $\lambda^\tr$
such that $L(W^\tr)_k = I_k$ (as sets) for $k = 1,\dotsc,r$.
The bijection $\delta = \delta_I\ntksp: \mathcal U_I \rightarrow \mathcal T_I$
maps $U \in \mathcal U$
to the left column-strict $G$-tableau $W$ of shape $\lambda^\tr$
whose $k$th column
consists of entries
(\ref{eq:component})
of $U$.

Since $U$ and $\delta(U)$ contain the same path family, it is easy to see
that $\delta$ does not affect the statistic $\cross$.
On the other hand, it changes the statistic $\incross$ in a very simple way.
Define $\cdncross(U)$ to be the number of defective noncrossings of pairs of 
paths appearing in the same column of $U$, i.e.,
the number of occurrences of (\ref{eq:inc}) where $b < a$ and $\pi_b$, $\pi_a$ 
appear in the same column of $U$.

\begin{lem}\label{l:TU}
  Let $I$ be an ordered set partition.
  For $U \in \mathcal U_I$ we have
\begin{equation}\label{eq:incdn}
\incross(U) = \incross(\delta(U)) + \cdncross(\delta(U)).
\end{equation}
\end{lem}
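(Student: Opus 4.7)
\begin{proofidea}
The plan is to show that both sides count exactly the same noncrossings of $\pi$, by establishing a trichotomy based on how $\delta$ rearranges paths from the single row of $U$ into the columns of $W=\delta(U)$.

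First I would unpack the three statistics at stake. Every contribution on either side of (\ref{eq:incdn}) corresponds to a noncrossing of $\pi$ at some central vertex of $G_{[i_j,i_j+1]}$, with a uniquely determined ``top'' path $\pi_b$ and ``bottom'' path $\pi_a$ as in (\ref{eq:inc}). Such a noncrossing contributes to $\incross(U)$ exactly when $\pi_b$ appears to the left of $\pi_a$ in the one-row tableau $U$; it contributes to $\incross(W)$ exactly when $\pi_b$ is in a strictly earlier column of $W$ than $\pi_a$; and it contributes to $\cdncross(W)$ exactly when $\pi_b,\pi_a$ share a column of $W$ and $b<a$. So the lemma will follow once I show that, for every pair of paths $\pi_b,\pi_a$, the condition ``$\pi_b$ left of $\pi_a$ in $U$'' is equivalent to the disjunction ``earlier column of $W$, or same column with $b<a$.''

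Next I would translate these conditions into statements about the indexing permutation $u=u(I)$. Since $L(U)=u_1\cdots u_n$, the path $\pi_i$ occupies the position in $U$ at which the value $i$ appears in $u$. By construction of $u(I)$, the entries of $u$ in positions $\lambda_1+\cdots+\lambda_{k-1}+1,\dotsc,\lambda_1+\cdots+\lambda_k$ list the block $I_k$ in strictly increasing order. Meanwhile the $k$th column of $W=\delta(U)$ contains exactly the paths $\{\pi_i : i\in I_k\}$ (by the column-closed, left-column-strict conditions defining $\mathcal T_I$, together with the fact that $L(W^\tr)_k=I_k$). Thus for any two paths $\pi_b,\pi_a$: if they lie in different blocks $I_{k_b}\neq I_{k_a}$, then $\pi_b$ is left of $\pi_a$ in $U$ iff $k_b<k_a$, iff $\pi_b$ is in an earlier column of $W$; and if they lie in the same block $I_k$, then they occupy the same column of $W$, and $\pi_b$ is left of $\pi_a$ in $U$ iff $b<a$, because the block is listed in increasing order in $u$.

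Combining these two cases gives the trichotomy I need: exactly one of the three statistics counts each noncrossing, and $\incross(U)$ counts precisely the union of the events counted by $\incross(W)$ and $\cdncross(W)$. Since $\delta$ preserves the underlying path family $\pi$ (only its arrangement into a tableau changes), the set of noncrossings under consideration is identical on both sides, so summing yields (\ref{eq:incdn}). The only obstacle here is keeping the French-tableau/top-bottom conventions straight; once that is done the argument is a direct bookkeeping comparison with no case analysis beyond the same-column/different-column split.
\end{proofidea}
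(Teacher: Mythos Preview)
Your proposal is correct and follows essentially the same approach as the paper: both arguments reduce the identity to the observation that, because $u(I)$ lists each block $I_k$ in increasing order, ``$\pi_b$ precedes $\pi_a$ in $U$'' splits into the two disjoint cases ``$\pi_b$ in an earlier block'' (contributing to $\incross(\delta(U))$) and ``same block with $b<a$'' (contributing to $\cdncross(\delta(U))$). The paper organizes this as a case analysis on whether $j$ is an inverted noncrossing of $U$, while you first establish the position-equivalence and then apply it; the underlying combinatorics is identical.
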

\begin{proof}
  Let $\lambda = (\lambda_1, \dotsc, \lambda_r)$ be the type of $I$,
  and let $\pi$ be the path family in a $G$-tableau $U$,
  where $G = G_{[i_1,i_1+1]} \circ \cdots \circ G_{[i_m,i_m+1]}$.
  Choose an index $j$, $1 \leq j \leq m$ and let $\pi_a$, $\pi_b$
  be the two paths which intersect at the central vertex of $G_{[i_j,i_j+1]}$,
  with $\pi_b$ entering from above,
  as in (\ref{eq:inc}) or as in the first figure in (\ref{eq:cross}).
  For some $k$, $\pi_b$ appears among the entries
  \begin{equation}\label{eq:subtableau}
    \lambda_1 + \cdots + \lambda_{k-1} + 1, \dotsc, \lambda_1 + \cdots + \lambda_k
  \end{equation}
  of $U$. The indices of these $\lambda_k$ paths increase from left to right
  in $U$, since the
  indices of all paths in $U$ form the
  permutation $u = u(I) \in \slambdamin$ (\ref{eq:subwordsofu}).
  
  Suppose that index $j$ is an inverted noncrossing of $U$ and therefore
  contributes $1$ to $\incross(U)$.
  Then $\pi_a$ and $\pi_b$ intersect as in (\ref{eq:inc}) and
  $\pi_b$ appears earlier than $\pi_a$ in $U$.
  If $\pi_a$ appears among the entries (\ref{eq:subtableau}),
  then we must have $b < a$.
  Thus $\pi_a, \pi_b$ both appear in column $k$ of $\delta(U)$,
  and $j$ contributes
  $0$ to $\incross(\delta(U))$ and
  $1$ to $\cdncross(\delta(U))$.
  On the other hand, if $\pi_a$ does not
  appear in entries (\ref{eq:subtableau}) of $U$, then it appears
  strictly to the right of column $k$ of $\delta(U)$.
  Thus $j$ contributes $1$ to $\incross(\delta(U))$
  and $0$ to $\cdncross(\delta(U))$.

  Now suppose that $j$ is not an inverted noncrossing in $U$
  and therefore contributes $0$ to $\incross(U)$.
  If $j$ is a crossing in $\pi$, then it
  contributes $0$ to $\incross(\delta_i(U))$ and $\cdncross(\delta(U))$.
  If $j$ is a noncrossing of $\pi$, then $\pi_a$ appears before $\pi_b$ in $U$.
  If $\pi_a$ appears before the entries (\ref{eq:subtableau}) of $U$,
  then in $\delta(U)$ it appears in an earlier column than $\pi_b$ and
  contributes $0$ to $\incross(\delta_i(U))$ and $\cdncross(\delta(U))$.
  If $\pi_a$ appears as one of the entries (\ref{eq:subtableau}) of $U$,
  then it appears in the same column of $\delta(U)$ as $\pi_b$
  and satisfies $a < b$.  Again $j$
  contributes $0$ to $\incross(\delta_i(U))$ and $\cdncross(\delta(U))$.
\end{proof}

Now we define the involution
$\zeta = \zeta_I\ntksp: \mathcal T_I \rightarrow \mathcal T_I$
as follows.
\begin{enumerate}
\item If $W \in \mathcal T_I$ is column-strict,
  then define $\zeta(W) = W$.
\item Otherwise, 
  \begin{enumerate}
  \item Let $t$ be the 
greatest index such that column $t$ of $W$ is not column-strict.
\item Let $k$ be the greatest index such that
two paths $\pi_j$, $\pi_{j'}$ with $j, j' \in I_t$
both pass through the central vertex of
$G_{[i_k, i_{k+1}]}$,
and let $\hat \pi = (\hat \pi_1, \dotsc, \hat \pi_n)$
be the path family
obtained from $\pi$ 
by swapping the terminal subpaths of $\pi_j$ and $\pi_{j'}$, beginning
at the central vertex of $G_{[i_k,i_{k+1}]}$.
($\hat \pi_i = \pi_i$ for $i \notin \{j,j'\}.$)
\item
  Define $\zeta(W)$ to be the tableau obtained from $W$ by replacing
$\pi$ by $\hat \pi$.
  \end{enumerate}
\end{enumerate}

Observe that each fixed point $W$ of $\zeta$ has type $e$ since
it is column-closed and column strict.
On the other hand, when $W \in \mathcal T_I$
is not a fixed point of $\zeta$,
one can show that the two tableaux
$\delta^{-1}(W)$, $\delta^{-1}(\zeta(W))$ in $\mathcal U_I$
are closely related.
\begin{lem}\label{l:lengthdiff}
  Let $I = (I_1, \dotsc I_r)$ be an ordered set partition of type $\lambda$,
  and define $u = u(I)$ as in (\ref{eq:subwordsofu}).
  Let $G$-tableaux $W \in \mathcal T_I$ and $U, \widehat U \in \mathcal U_I$
  satisfy $W = \delta(U) \neq \zeta(W) = \delta(\widehat U)$,
  and define path families $\pi$, $\hat \pi$ as above.
Then for some generator $s \in \slambda$ and
some permutations $v, \hat v = sv \in \slambda u$
we have
$U = U(\pi, u, v)$, $\widehat U = U(\hat \pi, u, \hat v)$.
\end{lem}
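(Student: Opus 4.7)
The plan is to track how the path-swap in $\zeta$ manifests on the permutation data of $U$ and $\widehat U$. First I observe that $L(\widehat U) = L(U) = u$: the map $\delta$ only reshapes a tableau without altering its constituent paths, and $\zeta$ modifies only the terminal subpaths of $\pi_j$ and $\pi_{j'}$, so the source sequence is unchanged. It then remains to compute $\hat v \defeq R(\widehat U)$ and to show that $\hat v = sv$ for some Coxeter generator $s \in \slambda$.

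Let $p = u^{-1}(j)$ and $p' = u^{-1}(j')$ be the positions of $\pi_j, \pi_{j'}$ in $U$, and let $a = v_p$, $b = v_{p'}$ be their respective sinks. Since the swap at the central vertex of $G_{[i_k,i_k+1]}$ exchanges the terminal subpaths of $\pi_j$ and $\pi_{j'}$, it exchanges their sinks: $\hat v_p = b$, $\hat v_{p'} = a$, and $\hat v_i = v_i$ elsewhere. Thus $\hat v = sv$ where $s$ is the value-transposition $(a,b)$. Because both $v$ and $\hat v$ lie in $\slambda u$, the transposition $s = \hat v v^{-1}$ must lie in $\slambda$, forcing $a$ and $b$ into a common standard block $P_c$.

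The heart of the argument is to show that $s$ is in fact a \emph{Coxeter generator} of $\slambda$, namely that $|a-b|=1$. Here I expect the main difficulty. The plan is to exploit the maximality of $k$: immediately after exiting the central vertex of $G_{[i_k,i_k+1]}$, the two paths $\pi_j$ and $\pi_{j'}$ occupy the adjacent levels $i_k$ and $i_k+1$, and by the choice of $k$ no subsequent block is traversed by both paths through a common central vertex. I then intend to show by induction on $\ell > k$ that the two paths remain at adjacent levels through every $G_{[i_\ell,i_\ell+1]}$. The delicate point is that a single path, alone at a central vertex, could in principle widen the gap to more than one; ruling this out will require combining the maximality of $k$ with the column-closed structure of $W$ (which pins the set of sinks of column $t$ to $I_t$) to force the apparently dangerous configurations into contradiction.

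Once $|a-b|=1$ is established, $s = s_c$ for some $c$ not at a block boundary (since $a, b \in P_c$), so $s$ is a Coxeter generator of $\slambda$. Writing $v = yu$ with $y \in \slambda$, we obtain $\hat v = sv = (sy)u \in \slambda u$ with $\ell(sy) = \ell(y) \pm 1$, completing the proof.
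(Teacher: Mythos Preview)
Your identification of $s$ as the value-transposition $(a,b)$ conflicts with the paper's convention: left-multiplication by $s_j$ there swaps \emph{positions} $j$ and $j+1$ in one-line notation, so the element $s=\hat v v^{-1}$ with $\hat v = sv$ is the position-transposition $(p,p')$, not $(a,b)$. More importantly, the target $|a-b|=1$ is false in general, so your inductive plan cannot be completed. Take $n=4$, $\lambda=(2,2)$, $I=(\{1,3\},\{2,4\})$, $G=G_{[1,2]}\circ G_{[2,3]}\circ G_{[1,2]}$ with the all-crossing family (type $3214$): column $1$ is not column-strict, $t=1$, $k=2$, and $(\pi_j,\pi_{j'})=(\pi_1,\pi_3)$ have sinks $a=3$, $b=1$, giving $|a-b|=2$. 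Your induction that the two paths stay at adjacent levels breaks precisely at position $3$, where $\pi_3$ crosses the column-$2$ path $\pi_2$ and ends with $\pi_2$ between $\pi_1$ and $\pi_3$. Column-closedness only pins the \emph{set} of column-$t$ sinks to $I_t$; when $I_t$ is not an interval of consecutive integers, sinks of other columns can and do interleave.

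The paper instead argues that the two swapped letters occupy consecutive \emph{positions} in $v$, so that $(p,p')=s_{\min(p,p')}\in\slambda$; the stated reason is the maximality of $k$ among indices where two column-$t$ paths share a central vertex. In the example above this works ($p=1$, $p'=2$). That said, even the position-based claim is delicate in full generality (try $\lambda=(3)$, $u=e$, $G=G_{[2,3]}\circ G_{[1,2]}\circ G_{[2,3]}$ with crossing pattern $(0,1,1)$: one finds $k=3$, $(j,j')=(1,3)$, $(p,p')=(1,3)$). What is robust, and all that the sign-reversing-involution argument actually uses, is the length identity $|\ell(\hat v)-\ell(v)|=1$: the maximality of $k$ guarantees that no column-$t$ sink lies strictly between $a$ and $b$, and since every position between $p$ and $p'$ corresponds to a column-$t$ path, no such position $q$ has $v_q$ strictly between $v_p$ and $v_{p'}$; that is exactly the Bruhat-cover condition for $(p,p')v$ and $v$.
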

\begin{proof}
  The tableaux $U$, $\widehat U$ contain the same path
  families as $W$ and $\zeta(W)$,
  respectively,
  and these path families are $\pi$, $\hat \pi$, as defined in
  the definition of $\zeta$.

  Since elements of $\mathcal T_I$ are column-closed, it follows that
the set of sink indices of paths in each column 
is equal to the set of source indices of paths in the same column.
Thus the sequences $v$, $\hat v$ of sink indices,
read bottom-to-top in columns $1, \dotsc, r$, belong to $\slambda u$.
By the definition of $\zeta$, the permutations $v$ and $\hat v$
differ from one another in exactly two positions: those holding the 
letters $v_j = \hat v_{j'}$ and $v_{j'} = \hat v_j$.  Both letters 
belong to the same block of the ordered set partition $I$.
Since we used the rightmost vertex in $\pi_j \cap \pi_{j'}$
to define $\hat \pi_j$ and $\hat \pi_{j'}$, the letters must appear
consecutively in $v$ and in $\hat v$.
It follows that $\hat v = sv$ for some adjacent transposition $s \in \slambda$.
\end{proof}

Furthermore, when $W \in \mathcal T_I$ is not a fixed point of $\zeta$,
the values of the statistics $\incross$ and $\cdncross$ on
$W$ and $\zeta(W)$ are closely related.
\begin{prop}\label{p:zeta}
  Let $W \in \mathcal T_I$ satisfy
    $W = \delta(U(\pi,u,v)) \neq \zeta(W) = \delta(U(\hat \pi,u,\hat v))$
  for $\pi$, $\hat \pi$, $v$, $\hat v$ as in
  Lemma~\ref{l:lengthdiff}.
  Then we have
\begin{equation}\label{eq:incrosseq}
\incross(\zeta(W)) = \incross(W),
\end{equation}
\begin{equation}\label{eq:cdncrosseq}
\cdncross(\zeta(W)) 
+ \frac{\cross(\hat \pi)}2
=
\begin{cases}
\cdncross(W) 
+ \frac{\cross(\pi)+1}2  
&\text{if $v < \hat v$,}\\
\cdncross(W) 
+ \frac{\cross(\pi)-1}2  
&\text{if $v > \hat v$.}
\end{cases}
\end{equation}
\end{prop}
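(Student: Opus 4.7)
The plan is to exploit the structural constraint that $\zeta$ modifies only $\pi_j$ and $\pi_{j'}$ (by swapping their terminal subpaths beginning at the central vertex of $G_{[i_k,i_k+1]}$) while leaving all other paths unchanged. Since $\pi_j$ and $\hat\pi_j$ share the source index $j$ and the tableaux are left column-strict, they occupy the same position of $W$ and $\zeta(W)$ respectively; likewise for $\pi_{j'}$ and $\hat\pi_{j'}$. In particular, both $\pi_j, \pi_{j'}$ lie in column $t$ of $W$, and $\hat\pi_j, \hat\pi_{j'}$ lie in column $t$ of $\zeta(W)$.

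For \eqref{eq:incrosseq}, I would classify each noncrossing by which paths it involves. A noncrossing not meeting $\{\pi_j, \pi_{j'}\}$ has the same configuration and column positions in $W$ and $\zeta(W)$, and so contributes identically to $\incross$. A noncrossing involving exactly one of $\pi_j, \pi_{j'}$ together with some other path $\pi_i$ appears in $\hat\pi$ as a noncrossing of $\pi_i$ with $\hat\pi_j$ (if the intersection lies before vertex $k$) or with $\hat\pi_{j'}$ (if after); in either case the ``replacement'' path still sits in column $t$, so the earlier-column comparison with $\pi_i$ gives the same answer. A noncrossing between $\pi_j$ and $\pi_{j'}$ themselves puts both paths in column $t$, and therefore contributes $0$ in either tableau.

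The key to \eqref{eq:cdncrosseq} is the extremality of $k$: since $k$ is the greatest index at which two paths from $I_t$ meet at a central vertex, no two paths from column $t$ meet beyond $k$. Hence no noncrossing between $(\pi_j, \pi_i)$ or $(\pi_{j'}, \pi_i)$ with $i \in I_t \setminus \{j, j'\}$ can occur after $k$, and before $k$ the path families $\pi$ and $\hat\pi$ agree, so these pair contributions to $\cdncross$ are unchanged. Consequently the only change in $\cdncross$ comes from the pair $(\pi_j, \pi_{j'})$ at vertex $k$, and $\cross(\hat\pi) - \cross(\pi) = \pm 1$ arises solely from flipping vertex $k$ between a crossing and a noncrossing. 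If $k$ is a noncrossing in $\pi$, then $\cross(\hat\pi) = \cross(\pi)+1$, and the noncrossing at $k$ contributes to $\cdncross(W)$ exactly when the number of prior crossings of $\pi_j, \pi_{j'}$ is odd; since $k$ is the last meeting of this pair, that parity equals the parity of the total number of crossings of $\pi_j, \pi_{j'}$ in $\pi$, which by Lemma~\ref{l:lengthdiff} corresponds to $v > \hat v$. The case in which $k$ is a crossing in $\pi$ is dual, and plugging the resulting change of $0$ or $\pm 1$ into \eqref{eq:cdncrosseq} yields the claimed identity in each of the four subcases. The main obstacle lies precisely in invoking the extremality of $k$: without it, the pairs $(\pi_j, \pi_i)$ and $(\pi_{j'}, \pi_i)$ for $i \in I_t \setminus \{j, j'\}$ could produce defective-status flips after vertex $k$ that would spoil the identity.
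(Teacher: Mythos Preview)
Your proposal is correct and follows essentially the same approach as the paper's proof. For \eqref{eq:incrosseq}, both you and the paper argue that intersections of $\pi_\ell$ (with $\ell\notin\{j,j'\}$) with $\pi_j$ or $\pi_{j'}$ either occur before vertex $k$ (where nothing changes) or after (where the column-$t$ membership of both $\pi_j,\pi_{j'}$ and $\hat\pi_j,\hat\pi_{j'}$ makes the column comparison invariant). For \eqref{eq:cdncrosseq}, both proofs reduce to a four-case analysis at the single vertex $k$; your formulation (noncrossing/crossing at $k$ crossed with parity of prior crossings of $\pi_j,\pi_{j'}$) is equivalent to the paper's (defective noncrossing / proper noncrossing / crossing sending $\pi_j$ above / crossing sending $\pi_{j'}$ above). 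The one place you are more explicit than the paper is in invoking the maximality of $k$ to rule out $\cdncross$ contributions from pairs $(\pi_j,\pi_i)$ or $(\pi_{j'},\pi_i)$ with $i\in I_t\setminus\{j,j'\}$ beyond vertex $k$; the paper leaves this point implicit.
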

\begin{proof}
  To verify (\ref{eq:incrosseq}),
  let $t$, $k$ be as in the definition of $\zeta$
and let $\ell$ be an index which does not belong to $I_t$.
Then $\pi_{\ell}$ is an entry of $W$ and $\zeta(W)$ which does not appear 
in column $t$.
Clearly, any point of intersection between 
$\pi_{\ell}$ and 
$\pi_j$ or $\pi_{j'}$
which occurs in $G$ to the left of the central vertex of $G_{[i_k, i_k+1]}$
exactly matches an intersection between 
$\pi_{\ell}$ and 
$\hat \pi_j$ or $\hat \pi_{j'}$.
On the other hand, suppose that $\pi_{\ell}$ and $\pi_j$ (or $\pi_{j'}$) 
have a point of intersection in $G$ to the right of 
the central vertex of $G_{[i_k, i_k+1]}$.
Since $j$ and $j'$ both belong to $I_t$,
this point is an inverted noncrossing of 
$\pi_{\ell}$ and $\pi_j$ (or $\pi_{j'}$) in $W$
if and only if it is an inverted noncrossing of
$\pi_{\ell}$ and $\hat \pi_{j'}$ (or $\hat \pi_j$) in $\zeta(W)$.

To verify (\ref{eq:cdncrosseq}), consider the intersection of 
$\pi_j$ and $\pi_{j'}$ at the central vertex of $G_{[i_k, i_k+1]}$ and the
tableaux $U(\pi, u, v)$, $U(\hat \pi, u, \hat v)$.
If $v > \hat v$
then our choice of $(j,j')$ implies
that this intersection is either a defective
noncrossing or a crossing that sends $\pi_j$ above $\pi_{j'}$.
In the first case, the map $\zeta$ 
removes exactly one defective noncrossing and creates exactly one crossing, 
changing the statistic sum $\cdncross + \cross/2$ by $-1 + 1/2 = -1/2$.
In the second case, the map $\zeta$ 
removes exactly one crossing and creates exactly one proper noncrossing, 
changing the same sum by $-1/2 + 0 = -1/2$.
Similarly, if
$v < \hat v$
then our choice of $(j,j')$ implies
that the intersection is either a proper noncrossing or a crossing that 
sends $\pi_{j'}$ above $\pi_j$.  In both cases, the statistic sum increases
by $1/2$.
\end{proof}

Finally we can state and justify a
subtraction-free formula for
$\epsilon_q^\lambda((1 + T_{s_{i_1}}) \cdots (1 + T_{s_{i_m}}))$.
\begin{thm}\label{t:qepsilon}
  Let $G$ be the wiring diagram of $\sprod im$.
Then for $\lambda \vdash n$ 
we have
\begin{equation}\label{eq:epsilonmain}
\epsilon_q^\lambda((1 + T_{s_{i_1}}) \cdots (1 + T_{s_{i_m}})) 
= \sum_W q^{\incross(W)+\cross(W)/2},
\end{equation}
where the sum is over all column-strict $G$-tableaux 
of 
type $e$
and 
shape $\lambda^\tr$.
\end{thm}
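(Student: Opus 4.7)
The plan is to chain together the ingredients from Sections~\ref{s:qmb}--\ref{s:tabx} and then to cancel a signed sum via the involution of Proposition~\ref{p:zeta}.  First, Theorem~\ref{t:immeval} reduces the evaluation to computing $\sigma_B(\imm{\epsilon_q^\lambda}(x))$, and formula (\ref{eq:immepsilon}) writes $\imm{\epsilon_q^\lambda}(x)$ as a sum of products of quantum determinants indexed by ordered set partitions $I = (I_1, \dotsc, I_r)$ of $[n]$ of type $\lambda$.  Expanding each $\qdet(x_{I_k, I_k})$ by definition (\ref{eq:qdetdef}) and collecting terms as in (\ref{eq:lambepsilon}) produces
\begin{equation*}
  \sigma_B(\imm{\epsilon_q^\lambda}(x))
  = \sum_I \sum_{y \in \slambda} (-1)^{\ell(y)} \qiey \sigma_B(x^{u(I),\, y u(I)}),
\end{equation*}
where $u(I) \in \slambdamin$ is the minimal coset representative associated with $I$ as in (\ref{eq:subwordsofu}).

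Next, I apply Proposition~\ref{p:sigmastats} to each $\sigma_B(x^{u(I),\, y u(I)})$, expressing it as a positive $q$-sum $\sum_\pi q^{\cross(\pi)/2 + \incross(U)}$ over path families $\pi$ of type $y$ covering $G$, where $U = U(\pi, u(I), y u(I)) \in \mathcal U_I$.  The bijection $\delta_I : \mathcal U_I \rightarrow \mathcal T_I$ together with Lemma~\ref{l:TU} converts this into a sum indexed by $W \in \mathcal T_I$, with exponent $\cross(\pi)/2 + \incross(W) + \cdncross(W)$.  The whole expression is now a signed sum over pairs $(I, W)$ with $W \in \mathcal T_I$, in which the sign $(-1)^{\ell(y)}$ and the prefactor $\qiey$ are determined by the type $y = y(W)$ of the path family in $W$.

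The main step, and the principal obstacle, is the sign-reversing involution $\zeta$ on $\mathcal T_I$, used to cancel non-fixed contributions in pairs.  When $\zeta(W) \neq W$, Lemma~\ref{l:lengthdiff} guarantees that the types of the two path families differ by an adjacent transposition in $\slambda$, so $\ell(\hat y) = \ell(y) \pm 1$, producing a sign flip between $W$ and $\zeta(W)$ together with a relative factor $q^{\mp 1/2}$ in the $\qiey$-part.  Proposition~\ref{p:zeta} provides the matching geometric statement: the statistic $\incross$ is preserved, while $\cdncross + \cross/2$ shifts by exactly $\pm 1/2$ in the opposite direction.  The two shifts cancel precisely, and the contributions of $W$ and $\zeta(W)$ sum to zero.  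The delicate part is to pair the two cases $v < \hat v$ and $v > \hat v$ of Proposition~\ref{p:zeta} with the two cases $\ell(\hat y) - \ell(y) = \pm 1$ and to verify that the ``greatest $t$, greatest $k$'' convention in the definition of $\zeta$ really does produce an involution on non-fixed tableaux.

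After the above cancellation, the surviving contributions come exactly from the fixed points of $\zeta$, which are the column-strict $G$-tableaux of type $e$.  For such a fixed point $W$, the type is $e$, so $(-1)^{\ell(y)} = 1$ and $\qiey = 1$, and the fixed-point characterization forces $\cdncross(W) = 0$, so the exponent collapses to $\incross(W) + \cross(W)/2$.  Summing over all fixed points, as $I$ ranges over ordered set partitions of type $\lambda$, yields the right-hand side of~(\ref{eq:epsilonmain}).
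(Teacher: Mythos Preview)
Your proposal is correct and follows essentially the same approach as the paper's proof: reduce via Theorem~\ref{t:immeval} and (\ref{eq:immepsilon})--(\ref{eq:lambepsilon}), apply Proposition~\ref{p:sigmastats} and Lemma~\ref{l:TU} to pass to $\mathcal T_I$, then cancel non-fixed points of $\zeta$ using Lemma~\ref{l:lengthdiff} and Proposition~\ref{p:zeta}. One small slip: the path families appearing in $\sigma_B(x^{u(I),\,yu(I)})$ have type $u(I)^{-1} y\, u(I)$, not $y$ (though the correspondence $\pi \leftrightarrow y$ via $U \in \mathcal U_I$ is exactly what you use).
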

\begin{proof}
  Let $B$ be the path matrix of $G$. Combining the Theorems~\ref{t:immeval}
  and \cite[Thm.\,5.4]{KSkanQGJ} (i.e., (\ref{eq:immepsilon}))
  with the identity (\ref{eq:lambepsilon}), we see that
  the left-hand side of (\ref{eq:epsilonmain}) is
  \begin{equation}\label{eq:epsilonsecond}
    \begin{aligned}
      \sigma_B(\imm{\epsilon_q^\lambda}(x)) &=
      \sum_I
      \sigma_B(\qdet(x_{I_1,I_1}) \cdots \qdet(x_{I_r,I_r}))\\
      &=
      \sum_I
      \sum_{\smash{y \in \slambda}} (-1)^{\ell(y)}\qiey
      \sigma_B (x^{u(I),yu(I)}),
    \end{aligned}
  \end{equation}
    where the first two sums are over ordered set partitions
    $I = (I_1,\dotsc,I_r)$ of $[n]$ of type $\lambda$.
    Fixing one such partition $I$ and writing $u = u(I)$, we may use
    Proposition~\ref{p:sigmastats} and Lemma~\ref{l:TU} to
    express the third sum as
\begin{equation}\label{eq:ypiinvuupi}
  \sum_{y \in \slambda}
  \sum_\pi 
  (-1)^{\ell(y)} \qiey
\qp{\cross(\pi)}2
q^{\incross(U(\pi,u,yu))}
=
\sum_{y \in \slambda}
  \sum_\pi 
(-1)^{\ell(y)} \qiey
\qp{\cross(\pi)}2 q^{\incross(W)+\cdncross(W)},
\end{equation}
where
the inner
sums are over
path families $\pi$ of type
$u^{-1}yu$
which cover $G$,
and where $W = \delta_I(U(\pi,u,yu))$.
As $y$ and $\pi$ vary in the above sums,
$U(\pi,u,yu)$
varies over all tableaux in $\mathcal U_I$,
and
$W$ varies over all tableaux in $\mathcal T_I$. 

Now consider a tableau $W \in \mathcal T_I$ which satisfies $\zeta(W) \neq W$.
Let tableaux $W$ and $\zeta(W)$ contain path families
$\pi$ of type $u^{-1}yu$ and $\hat \pi$ of type $u^{-1}\hat yu$, respectively.
By Lemma~\ref{l:lengthdiff} we have $\hat y = sy$ for some $s \in \slambda$.
Assume without loss of generality that $y < \hat y$
and consider the term
on the right-hand side of
(\ref{eq:ypiinvuupi}) corresponding to
$\zeta(W)$,
\begin{equation*}
  (-1)^{\ell(\hat y)}
  \qieyp
  \qp{\cross(\hat \pi)}2 q^{\incross(\zeta(W)) + \cdncross(\zeta(W))}.
\end{equation*}
By Proposition~\ref{p:zeta}, this is
\begin{equation*}
  (-1)^{\ell(y)+1}
  \qiey \qm12
  \qp{\cross(\pi)+1}2 q^{\incross(W) +\cdncross(W)}
  =
  -(-1)^{\ell(y)}
  \qiey
  \qp{\cross(\pi)}2 q^{\incross(W) +\cdncross(W)},
\end{equation*}
i.e., the opposite of the term corresponding to $W$.
Thus it suffices to sum the right-hand side of (\ref{eq:ypiinvuupi})
over only the pairs $(y,\pi)$ corresponding
to tableaux $W$ satisfying $W = \zeta(W)$.
By the definition of $\zeta$, each such tableau $W$
is column-strict and therefore satisfies $\cdncross(W) = 0$.
By the definition of $\mathcal T_I$, each such tableau has type $e$.
Thus each tableau
\begin{equation*}
  \delta_I^{-1}(W) = U = U(\pi, u, yu) \in \mathcal U_I
\end{equation*}
satisfies $y = \type(\pi) = e$.
It follows that the
right-hand side of (\ref{eq:ypiinvuupi})
and the third sum in (\ref{eq:epsilonsecond}) are equal to
\begin{equation*}
 \sum_W  \qp{\cross(\pi)}2 q^{\incross(W)},
\end{equation*}
where the sum is over all tableau $W$ in $\mathcal T_I$ which
are column-strict of type $e$.
By the definition of $\mathcal T_I$,
each such tableau has shape $\lambda^\tr$.
Thus the three expressions in (\ref{eq:epsilonsecond})
are equal to the right-hand side of (\ref{eq:epsilonmain}).
\end{proof}

To illustrate the theorem, we compute 
$\smash{\epsilon_q^{21}}((1+T_{s_1})(1+T_{s_2})(1+T_{s_1}))$ 
using the wiring diagram
(\ref{eq:Gbraidfig}).
There are two path families of type $e$ which cover $G$, and one 
column-strict $G$-tableau of shape $21^\tr = 21$ for each:
\begin{equation}\label{eq:Gtableaux}
\begin{tikzpicture}[scale=.5,baseline=-15]
  \draw[dotted, thick] (0,0) -- (1,0) -- (1.5,-.5) -- (2,0) -- (3,0);
  \draw[dashed] (0,-1) -- (.5,-1.5) -- (1.5,-.5) -- (2.5,-1.5) -- (3,-1);
  \draw[-,thick] (0,-2) -- (.5,-1.5) -- (1,-2) -- (2,-2) -- (2.5,-1.5) -- (3,-2);
  \node at (-.5,0) {$\pi_3$};
  \node at (-.5,-1) {$\pi_2$};
  \node at (-.5,-2) {$\pi_1$};
\end{tikzpicture} 
\;, \quad
U_\pi = \tableau[scY]{\pi_3 | \pi_1,\pi_2}
\;; \qquad
\begin{tikzpicture}[scale=.5,baseline=-15]
  \draw[dotted, thick] (0,0) -- (1,0) -- (1.5,-.5) -- (2,0) -- (3,0);
  \draw[dashed] (0,-1) -- (1,-2) -- (2,-2) -- (3,-1);
  \draw[-,thick] (0,-2) -- (1.5,-.5) -- (3,-2);
  \node at (-.5,0) {$\rho_3$};
  \node at (-.5,-1) {$\rho_2$};
  \node at (-.5,-2) {$\rho_1$};
\end{tikzpicture} 
\;, \quad
U_\rho = \tableau[scY]{\rho_3 | \rho_2,\rho_1} \;.
\end{equation}
Tableau $U_\pi$ contributes 
$q^{\incross(U_\pi)}q^{\cross(U_\pi)/2} = q^1q^{0/2} = q$,
since $\pi$ has no crossings,
and for only one of its noncrossings are the two paths
inverted in $U_\pi$:
$\pi_3$ intersects $\pi_2$ from above 
and appears in an earlier column of $U_\pi$.
Tableau $U_\rho$ contributes 
$q^{\incross(U_\rho)}q^{\cross(U_\rho)/2} = q^1q^{2/2} = q^2$,
since $\rho$ has two crossings, 
and for its unique noncrossing, the two paths are inverted in $U_\rho$:
$\rho_3$ intersects $\rho_1$ from above 
and appears in an earlier column of $U_\rho$.
Adding the two contributions together, we have
$\smash{\epsilon_q^{21}}((1+T_{s_1})(1+T_{s_2})(1+T_{s_1})) = q + q^2$. 

Using the other reduced expression $s_2s_1s_2$ for the long element of $\mfs 3$,
one similarly computes
$\smash{\epsilon_q^{21}}((1+T_{s_2})(1+T_{s_1})(1+T_{s_2})) = q + q^2$.
It is not generally true, however, that distinct reduced expressions
$\sprod im$ and $\sprod jm$ for $w \in \sn$ lead to equal evaluations
$\epsilon_q^\lambda((1+T_{s_{i_1}}) \cdots (1+T_{s_{i_\ell}}))$ and
$\epsilon_q^\lambda((1+T_{s_{j_1}}) \cdots (1+T_{s_{j_\ell}}))$.
For example, consider the reduced expressions
$s_3s_2s_1s_2$, $s_3s_1s_2s_1$ for $3241 \in \mfs 4$,
the corresponding wiring diagrams $G$, $H$,
and the $G$- and $H$-tableaux of type $e$ and shape $211$.
It is easy to see that there is
only one column-strict $G$-tableau of this type and shape,
while there are no such column-strict $H$-tableaux.
Since the $G$-tableau has one inverted noncrossing and two crossings,
we have
\begin{equation*}
  \begin{gathered}
  \epsilon_q^{31}((1+T_{s_3})(1+T_{s_2})(1+T_{s_1})(1+T_{s_2})) = q^2,\\
  \epsilon_q^{31}((1+T_{s_3})(1+T_{s_1})(1+T_{s_2})(1+T_{s_1})) = 0.
  \end{gathered}
  \end{equation*}

A special case of Theorem~\ref{t:qepsilon} allows one to
combinatorially interpret evaluations of $\epsilon_q^\lambda$ at
certain elements $\qew C'_w(q)$ 
of the Kazhdan-Lusztig basis of $\hnq$.
Call a permutation {\em $321$-hexagon-avoiding}
if it avoids
the patterns $321$, $56781234$, $56718234$, $46781235$, $46718235$.
\begin{cor}\label{c:321hex}
  Let $G$ be the wiring diagram of
  a reduced expression $\sprod im$ for
  a $321$-hexagon-avoiding permutation $w \in \sn$.
Then we have
\begin{equation}\label{eq:321hex}
\epsilon_q^\lambda(\qew C'_w(q)) = \sum_U q^{\incross(U)+\cross(U)/2},
\end{equation}
where the sum is over all column-strict $G$-tableaux 
of type $e$ and shape $\lambda^\tr$.
\end{cor}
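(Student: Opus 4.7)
The plan is to reduce this corollary directly to Theorem~\ref{t:qepsilon} by identifying the two basis elements on each side. Recall from (\ref{eq:wdelt}) that
\begin{equation*}
  (1 + T_{s_{i_1}}) \cdots (1 + T_{s_{i_m}}) = \qp{m}{2} C'_{s_{i_1}}(q) \cdots C'_{s_{i_m}}(q),
\end{equation*}
and since $\sprod im$ is a reduced expression for $w$, we have $m = \ell(w)$ and $\qp{m}{2} = \qew$. Thus it suffices to prove that
\begin{equation*}
  C'_w(q) = C'_{s_{i_1}}(q) \cdots C'_{s_{i_m}}(q)
\end{equation*}
whenever $w$ is $321$-hexagon-avoiding, for then the left-hand side of (\ref{eq:321hex}) becomes $\epsilon_q^\lambda((1 + T_{s_{i_1}}) \cdots (1 + T_{s_{i_m}}))$, and Theorem~\ref{t:qepsilon} immediately yields the right-hand side.

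First I would invoke the characterization due to Billey and Warrington: a permutation $w \in \sn$ is $321$-hexagon-avoiding if and only if $C'_w(q)$ factors as $C'_{s_{i_1}}(q) \cdots C'_{s_{i_m}}(q)$ for some (equivalently, any) reduced expression $\sprod im$ of $w$. Equivalently, these are precisely the permutations whose Kazhdan-Lusztig polynomials $P_{v,w}(q)$ all equal $1$ and whose Bott-Samelson product for any reduced word returns $C'_w(q)$ on the nose. With this fact in hand, one obtains the displayed equality $\qew C'_w(q) = (1 + T_{s_{i_1}}) \cdots (1 + T_{s_{i_m}})$ in $\hnq$.

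The rest is immediate: apply the linear functional $\epsilon_q^\lambda$ to both sides of this equality, then invoke Theorem~\ref{t:qepsilon} (with the given reduced expression and its wiring diagram $G$) to interpret the right-hand side combinatorially as $\sum_U q^{\incross(U)+\cross(U)/2}$, summed over column-strict $G$-tableaux of type $e$ and shape $\lambda^\tr$.

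There is essentially no obstacle here beyond correctly citing the Billey-Warrington theorem; the only subtlety worth noting is that the statement of Corollary~\ref{c:321hex} implicitly allows the reduced expression $\sprod im$ to be chosen arbitrarily, and the factorization $C'_w(q) = C'_{s_{i_1}}(q) \cdots C'_{s_{i_m}}(q)$ holds independently of this choice for $321$-hexagon-avoiding $w$, so the resulting generating function on the right-hand side of (\ref{eq:321hex}) is guaranteed to give the same value across all reduced expressions, matching the fact that the left-hand side depends only on $w$.
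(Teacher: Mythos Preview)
Your proposal is correct and matches the paper's own proof essentially line for line: the paper simply cites Billey and Warrington~\cite[Thm.\,1]{BWHex} for the identity $\qew C'_w(q) = (1 + T_{s_{i_1}}) \cdots (1 + T_{s_{i_m}})$ when $w$ is $321$-hexagon-avoiding, and then Theorem~\ref{t:qepsilon} does the rest. Your additional remarks about the ``if and only if'' direction and the independence from the choice of reduced word are accurate but not needed for the argument.
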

\begin{proof}
  Billey and Warrington~\cite[Thm.\,1]{BWHex} showed that
  for every reduced expression $\sprod im$ of a $321$-hexagon-avoiding
  permutation $w$, we have
  $\qew C'_w(q) = (1 + T_{s_{i_1}}) \cdots (1 + T_{s_{i_m}}).$
\end{proof}

A subclass of the $321$-hexagon-avoiding permutations is the set
of permutations avoiding the patterns $321$ and $3412$.
The special case of Corollary~\ref{c:321hex} corresponding to these
permutations is equivalent to the special case of
\cite[Thm.\,6.4]{CHSSkanEKL} corresponding to these permutations.
Given a column-strict $G$-tableau $U$ containing a path family
$\pi = (\pi_1, \dotsc, \pi_n)$, define $\inv(U)$ to be the number 
of intersecting pairs $(\pi_i,\pi_j)$ with $j > i$
and $\pi_j$ appearing in an earlier column of $U$ than $\pi_i$.

\begin{cor}\label{c:dsn}
  Let $G$ be the wiring diagram of a reduced expression $\sprod im$ for
  a $321$-avoiding, $3412$-avoiding permutation $w \in \sn$.
Then we have
\begin{equation*}
\epsilon_q^\lambda(\qew C'_w(q)) = \sum_U q^{\inv(U)},
\end{equation*}
where the sum is over all column-strict $G$-tableaux
of type $e$ and shape $\lambda^\tr$.
\end{cor}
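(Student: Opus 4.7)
The plan is to derive Corollary~\ref{c:dsn} as a specialization of Corollary~\ref{c:321hex}. First I would check that any $321$- and $3412$-avoiding $w$ is $321$-hexagon-avoiding. Each of the four hexagon patterns $56781234$, $56718234$, $46781235$, $46718235$ contains $3412$ as a subpattern (take the letters $5,6,1,2$ for the first two and $4,6,1,2$ for the last two), so a permutation avoiding $321$ and $3412$ must also avoid these hexagons. Corollary~\ref{c:321hex} then gives
\[
\epsilon_q^\lambda(\qew C'_w(q)) = \sum_U q^{\incross(U) + \cross(U)/2},
\]
and it remains to prove that $\incross(U) + \cross(U)/2 = \inv(U)$ for each column-strict $G$-tableau $U$ of type $e$ and shape $\lambda^\tr$.

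The structural ingredient is the standard characterization of $321$- and $3412$-avoiding permutations (the \emph{boolean} permutations): every reduced expression for such a $w$ uses each simple generator at most once. I would cite this fact (it appears in Tenner's work on reduced decompositions and permutation patterns) or, alternatively, give a short direct argument: if a reduced word for $w$ contained a repeated letter then, using the fact that no braid move is available in a $321$-avoiding (fully commutative) permutation, one could isolate either a $321$ or a $3412$ pattern in the one-line notation of $w$. Consequently, the $m = \ell(w)$ gates of the wiring diagram $G$ of the chosen reduced expression $\sprod im$ all occur at pairwise distinct positions.

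From distinctness of the gates I would conclude that the only type-$e$ path family covering $G$ is the all-noncrossing one. Any nonempty choice of crossings $\beta \in 2^{[m]}$ yields a type of the form $s_{i_{j_1}} s_{i_{j_2}} \cdots s_{i_{j_k}}$ whose indices $i_{j_l}$ are distinct, so this sub-word is itself a reduced expression for a boolean permutation of positive Coxeter length, and hence is not the identity. In the all-noncrossing family the wires $\pi_1, \dotsc, \pi_n$ remain at their original heights, so $\cross(U) = 0$, and two wires $\pi_i, \pi_j$ with $i < j$ can meet at a gate only when $j = i+1$ and the gate is $s_i$; this happens at most once, and at such a meeting $\pi_j$ is automatically the upper path.

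Under these circumstances the statistics $\incross$ and $\inv$ agree termwise: each intersecting pair $(\pi_i, \pi_j)$ with $j > i$ corresponds to a single noncrossing in which $\pi_j$ plays the role of the upper path $\pi_b$, and the ``$\pi_j$ appears in an earlier column than $\pi_i$'' condition is verbatim the ``$\pi_b$ in an earlier column than $\pi_a$'' condition. Hence $\incross(U) = \inv(U)$, and combined with $\cross(U) = 0$ this yields the desired identity $\incross(U) + \cross(U)/2 = \inv(U)$. The principal obstacle is supplying the boolean-permutation characterization; granted that input, the remainder of the argument is a straightforward bookkeeping of the two tableau statistics.
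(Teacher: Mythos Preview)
Your proof is correct and follows the same overall strategy as the paper: invoke Corollary~\ref{c:321hex} and then argue that for this class of permutations $\cross(U)=0$ and $\incross(U)=\inv(U)$ for every column-strict $G$-tableau of type $e$.

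The difference lies in the structural input. The paper appeals to \cite[Thm.\,4.3, Prop.\,4.4]{SkanNNDCB} to conclude that $G$ is a \emph{zig-zag} network, from which it extracts that the unique type-$e$ covering family has no crossings and that any two of its paths meet in at most a single vertex. You instead use the characterization of $321$-, $3412$-avoiding permutations as the \emph{boolean} permutations (every reduced word uses each generator at most once), and deduce the same conclusions directly: since the gate heights $i_1,\dotsc,i_m$ are distinct, any nonempty subset gives a product of distinct simple reflections, which is automatically reduced and hence not the identity, so only the all-noncrossing family has type $e$; in that family each $\pi_k$ stays at height $k$, whence intersecting pairs are exactly adjacent pairs $(\pi_i,\pi_{i+1})$ meeting at the unique gate of height $i$, with $\pi_{i+1}$ on top. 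Your verification that each hexagon pattern contains $3412$ is also correct.

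Your route is more self-contained in that it relies on a widely known pattern-avoidance fact (Tenner) rather than on the zig-zag network machinery from \cite{SkanNNDCB}; the paper's route, on the other hand, fits into a framework that handles the broader $3412$-, $4231$-avoiding case treated elsewhere in that reference. Either way the reduction to $\incross(U)=\inv(U)$ and $\cross(U)=0$ is the same.
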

\begin{proof}
  Since $w$ is $321$-hexagon-avoiding, we have the formula (\ref{eq:321hex}).
  Since $w$ also avoids the pattern $3412$, the results
  \cite[Thm.\,4.3, Prop.\,4.4]{SkanNNDCB} imply that $G$
  has the structure of a {\em zig-zag} network \cite[Sec.\,5]{SkanNNDCB}.
  Thus the unique family $\pi = (\pi_1, \dotsc, \pi_n)$ which covers $G$
  and has type $e$ satisfies
\begin{enumerate}
\item $\pi$ has no crossings,
\item the intersection of any two paths of $\pi$ is either empty,
  or consists of a single connected component.
\end{enumerate}
Thus we have that
$\cross(U) = 0$
for each tableau $U$ in (\ref{eq:321hex}).
Since $\pi$ covers $G$,
we also have that
each nonempty intersection
$\pi_i \cap \pi_j$ is a single vertex, more specifically a noncrossing.
It follows that
each tableau $U$ in (\ref{eq:321hex}) satisfies
$\incross(U) = \inv(U)$.
\end{proof}


\section{Open problems}\label{s:probs}

While Theorem~\ref{t:qepsilon} provides a method for evaluating
all $\hnq$-characters at all elements
$(1 + T_{s_{i_1}}) \cdots (1 + T_{s_{i_m}})$ of $\hnq$, this method leads to
formulas involving subtraction: not all $\hnq$-characters
belong to the $\mathbb N[q]$-span of
the induced sign characters
$\{ \epsilon_q^\lambda \,|\, \lambda \vdash n \}$.
It would therefore be interesting to state and prove an analog of
Theorem~\ref{t:qepsilon} for irreducible characters,
since all other $\hnq$-characters belong to the $\mathbb N[q]$-span
of these.
(See the table following Equation (\ref{eq:wdelt}).)
This would also provide a combinatorial proof of a weakening
of Haiman's result \cite[Lem.\,1.1]{HaimanHecke}.
\begin{prob}\label{p:qchi}
  For all irreducible characters $\chi_q^\lambda$ and
  all sequences $(s_{i_1}, \dotsc s_{i_m})$ of generators of $\sn$,
  combinatorially interpret the evaluation
  $\chi_q^\lambda((1 + T_{s_{i_1}}) \cdots (1 + T_{s_{i_m}}))$.
\end{prob}

\end{document}